\newcommand{\ol}[1]{\overline{#1}}
\numberwithin{equation}{section}
\newcommand{\R}{\ensuremath{\mathbb{R}}}
\newcommand{\Rn}{\ensuremath{\mathbb{R}^n}}
\newcommand{\N}{\ensuremath{\mathbb{N}}}
\newcommand{\E}{\ensuremath{\mathcal{E}}}
\newcommand{\loc}{\operatorname{loc}}
\newcommand{\pv}{\operatorname{p.v.}}
\newcommand{\sd}{\, \mathrm{d}}
\newcommand{\supp}{\operatorname{supp}}
\newcommand{\eps}{\ensuremath{\varepsilon}}
\newcommand{\weight}[1]{\langle #1\rangle}
\theoremstyle{plain}
\newtheorem{cor}[theorem]{Corollary}
\newtheorem{lem}[theorem]{Lemma}
\theoremstyle{definition}
\newtheorem{claim*}{Claim}
\newtheorem{assumption}[theorem]{Assumption}
\theoremstyle{remark}
\newcommand{\Hzero}{H^{-1}_{(0)}}
\newcommand{\Hone}{H^1_{(0)}}
\newcommand{\A}{\mathcal{A}}
\newcommand{\B}{\mathcal{B}}
\newcommand{\LL}{\mathcal{L}}
\newcommand{\dom}{\operatorname{dom}}
\newenvironment{proof*}[1]{{\bf Proof
#1:}}{\hspace*{\fill}\rule{1.2ex}{1.2ex}\\ }
\newcommand{\mean}[1]{m(#1)}
\begin{document}

\begin{titlepage}
\title{Cahn-Hilliard Equation with Nonlocal Singular Free Energies }
\author{  Helmut Abels\footnote{Fakult\"at f\"ur Mathematik,
    Universit\"at Regensburg,
    93040 Regensburg,
    Germany, {\sf helmut.abels@mathematik.uni-regensburg.de}
}\,,
Stefano Bosia\footnote{Politecnico di Milano, Dipartimento di Matematica,
    20133 Milano,
    Italy, {\sf stefano.bosia@polimi.it}
}\,,
Maurizio Grasselli\footnote{Politecnico di Milano, Dipartimento di Matematica,
    20133 Milano,
    Italy, {\sf mau\-ri\-zio.grasselli@polimi.it}
}}
\end{titlepage}

\maketitle

\begin{abstract}
We consider a Cahn-Hilliard equation which is the conserved gradient flow of a nonlocal total free energy functional. This functional is characterized by a Helmholtz free energy density, which can be of logarithmic type. Moreover, the spatial interactions between the different phases are modeled by a singular kernel. As a consequence, the chemical potential $\mu$ contains an integral operator acting on the concentration difference $c$, instead of the usual Laplace operator. We analyze the equation on a bounded domain subject to no-flux boundary condition for $\mu$ and by assuming constant mobility. We first establish the existence and uniqueness of a weak solution and some regularity properties. These results allow us to define a dissipative dynamical system on a suitable phase-space and we prove that such a system has a (connected) global attractor. Finally, we show that a Neumann-like boundary condition can be recovered for $c$, provided that it is supposed to be regular enough.
\end{abstract}

\noindent{\bf Key words}: Cahn-Hilliard equation, nonlocal free energy, regional fractional Laplacian, logarithmic potential, monotone operators, global attractors.

\medskip
\noindent{\bf MSC2010:} 35B41, 37L99, 45K05, 47H05, 47J35, 80A22

\section{Introduction}

The Cahn-Hilliard equation was proposed long ago as a diffuse interface model for phase separation in binary alloys subject to a cooling process (see \cite{Cahn-61,CahnHilliard}, cf. also \cite{Fife-2000,N-C-98,N-C-08} and references therein). Since then it has been studied theoretically by many authors (see the pioneering contributions \cite{Debussche1995,EG-96,EZ-86,NST-89}, cf. also the review paper \cite{CMZ-11}). More recently, it has been observed that a physically more rigorous derivation leads to a nonlocal equation (see \cite{GL-97,GL-98}). In this case, one can still view the Cahn-Hilliard equation as a conserved gradient flow of the first variation of a suitable total free energy functional $E$. However, $E$ has the following form
\begin{equation*}
    E(c)= \frac12\int_\Omega\int_\Omega (c(x)-c(y))^2k(x,y,x-y)\sd x\sd y +\int_\Omega f(c(x)) \sd x.
\end{equation*}
Here $c$ denotes the (relative) concentration difference of the two components, $\Omega\subset\Rn$ is a bounded domain with a $\Cont[2]$-boundary $\partial\Omega$  (where $n\in\{1,2,3\}$ in applications), $k$ is the interaction kernel and $f$ is the Helmholtz free energy density. The latter, which accounts for the entropy of the system, is given by (see~\cite{CahnHilliard})
\begin{equation}\label{logpot}
    f(c) = \frac{\theta}2 \left((1+c)\ln (1+c)+ (1-c)\ln (1-c)\right) - \frac{\theta_c}2 c^2,
\end{equation}
where $\theta,\theta_c>0$ are given constants. We remind that $f$ is usually approximated in the literature by a more tractable fourth-order polynomial double well. Moreover, we note that $f$ is convex if and only if $\theta\geqslant \theta_c$. In this case the mixed phase is stable. On the other hand, if $0<\theta<\theta_c$, the mixed phase is unstable and phase separation occurs.

The chemical potential $\mu$ is the first variation of $E$ and the Cahn-Hilliard equation can be then written as follows
\begin{equation}\label{eq:CH}
\partial_t c = \nabla\cdot(m(c)\nabla \mu),
\end{equation}
where $m(c)$ is the so-called mobility. Here we assume for simplicity that $m(c)$ is independent of $c$ and, for simplicity, equal to one. When $k(x,y,x-y)=J(x-y)$ and $J: \Rn \to \R $ is a sufficiently smooth (and even) function, equation~\eqref{eq:CH} has been analyzed in~\cite{GZ-03,GG-14,GL-98,LP-DCDS-S-11,LP-JMAA-11} (see also~\cite{BH-D05,BH-N05,CKRS-07} for constant mobility and/or regular $f$). But in this case the mathematical properties of solutions are very different from the usual so-called local Cahn-Hilliard equation. More precisely a second order differential operator in the equation for the chemical potential is replaced by a compact operator  in this case. As far as we know, such an equation has never been studied when $k$ is singular (see, however, \cite{NNG-08} for a fractional Allen-Cahn equation). This is the goal of the present contribution. Therefore we consider the following problem
\begin{alignat}{2}
    \partial_t c &= \Delta \mu\ &\qquad&\text{in}\ \Omega\times (0,\infty), \label{eq:CH1}\\
    \mu &= \LL c + f'(c)&\qquad&\text{in}\ \Omega\times (0,\infty), \label{eq:CH2}\\
    \dnu{\mu} &=0&\qquad&\text{on}\ \partial\Omega\times (0,\infty), \label{eq:CH3}\\
    c|_{t=0}&= c_0 &&\text{in}\ \Omega, \label{eq:CH4}
\end{alignat}
where $\LL$ is a non-local linear operator defined as follows
\begin{align}\label{eq:defnL}
    \LL u(x) &= \pv \int_{\Omega} (u(x)-u(y))k(x,y,x-y)dy\\\nonumber
    &=\lim_{\eps\to 0} \int_{\Omega\setminus B_\eps(x)} (u(x)-u(y))k(x,y,x-y)dy
\end{align}
and $k\colon \Rn\times \Rn\times (\Rn\setminus\{0\})\to \R$ is $(n+2)$-times continuously differentiable and satisfies the following conditions (see~\cite{Abels2007}):
\begin{alignat}{2}
    &\quad  k(x,y,z)=k(y,x,-z)\,, \label{k-ass-one} \\
    &\quad |\partial_x^\beta\partial_y^\gamma\partial_z^\delta k(x,y,z)| \leqslant
        C_{\beta,\gamma,\delta}|z|^{-n-\alpha-|\delta|} \, , \label{k-ass-two} \\
    &\quad  c_0 |z|^{-n-\alpha} \leqslant k(x,y,z)\leqslant C_0 |z|^{-n-\alpha} \,. \label{k-ass-three}
\end{alignat}
for all $x,y,z\in\Rn$, $z\neq 0$ and $\beta, \gamma, \delta\in\N_0^n$ with $|\beta|+|\gamma|+|\delta|\leqslant n+2$ where $\alpha$ is the order of the operator. Unless specified otherwise, throughout this paper we will always consider the case $\alpha \in (1,2)$. An example for $k(\cdot, \cdot, \cdot)$ is given by $k(x,y,z) = \omega(x,y) |z|^{-n-\alpha}$ and $\omega\in \Cont[n+2]_{b}(\Rn)$. Note that the definition of the operator $\LL$ depends on $\Omega$. Formally, in the case $\Omega = \Rn$ and $k(x,y,z) = |z|^{-n-\alpha}$ one has $\LL = const \times (-\Delta)^\frac{\alpha}{2}$ where $(-\Delta)^\frac{\alpha}{2}$ is a fractional power of the Laplace operator. If $\Omega$ is a bounded domain, the operator $\LL$ has the same form as the generator of a censored stable process (cf., e.g., \cite{BBC03}) and it is also known as regional fractional Laplacian.

Our main result is the well-posedness of the weak formulation of problem~\eqref{eq:CH1}-\eqref{eq:CH4} together with a natural boundary condition for $c$, which will be part of the weak formulation. In the above (strong) formulation~\eqref{eq:CH1}--\eqref{eq:CH4}, a boundary condition for the variable $c$ is missing. A further result is concerned  with the characterization of such a condition, provided that the weak solution is smooth enough (say,  $c\in \Cont[1, \beta](\closure{\Omega})$) and $k$ fulfills suitable assumptions. More precisely, we prove that
\begin{equation*}
    \nabla c(x_{0}) \cdot \vect{n}_{x_{0}}  = 0,
\end{equation*}
where $\vect{n}_{x_{0}}$ depends on the interaction kernel $k$ (see \eqref{eq:direction_kernel} below).

This condition reduces to the usual homogeneous Neumann boundary condition for $c$ for symmetric kernels (cf.\ Theorem~\ref{thm:boundary_regularity} and Remarks~\ref{rem:HomKernel} and \ref{rem:IsoKernel} below). Unfortunately, we are unable to prove that a weak solution is indeed as regular as it is required for this characterization. Nonetheless, our weaker regularity results allow us to prove that the dissipative dynamical
system generated by \eqref{eq:CH1}-\eqref{eq:CH4} has a (connected) global attractor.

The paper is organized as follows. In Section~\ref{prelim} we introduce some basic notation and function spaces as well as we account for some preliminary results. Section~\ref{subgra} is essentially devoted to the computation of the subgradient of the (convex) functional
\begin{equation*}
    F(c) =  \frac12\int_\Omega\int_\Omega (c(x)-c(y))^2k(x,y,x-y)\sd x\sd y + \int_{\Omega} \phi( c(x)) \mathrm{d}x
\end{equation*}
and to the characterization of its domain. Here $\phi$ is the convex part of $f$ (see Assumption \ref{assump:1} below). Combining the results of Sections~\ref{prelim} and~\ref{subgra} we give the proof of the well-posedness theorem in Section~\ref{S:existence}. The existence of the global attractor is established in Section~\ref{S:attractor}. Finally, in Section~\ref{S:BC_regular} we show that a regular weak solution $c$ does satisfy the above boundary condition.

\section{Basic tools and well-posedness}\label{prelim}

Given a set $M$, its power set will be denoted by $\mathcal{P}(M)$. Moreover, we denote $\R^n_+=\{x\in \R^n: x_n >0\}$ and $\R_+=\R^1_+$. If $X$ is a (real) Banach space and $X^\ast$ is its dual, then
\begin{equation*}
    \weight{f,g}\equiv\weight{f,g}_{X^\ast,X} = f(g), \qquad f\in X^\ast,g\in X,
\end{equation*}
denotes the duality product. Moreover, if $H$ is a (real) Hilbert space, $(\cdot,\cdot )_H$ will indicate its inner product. In the following, all Hilbert spaces will be separable.

\subsection{Function spaces}\label{eq:FctSpaces}\label{S:funspa}

Throughout the paper $\Omega\subseteq\R^n$ will be a bounded domain with $\Cont[2]$-boundary. Let $L^p(\Omega)$, $1\leqslant p\leqslant \infty$, be the set of $p$-integrable (or essentially bounded) functions $f\colon \Omega\to \R$ and set $\|\cdot \|_p\equiv \|\cdot\|_{L^p(\Omega)}$. Moreover, $H^m(\Omega)$, $m\in \N$, indicates the usual $L^2$-Sobolev space of order $m$ and $H^m_0(\Omega)$ is the closure of $\Cont[\infty]_{0}(\Omega)$ in $\Hs[m](\Omega)$.

Given $f\in L^1(\Omega)$, we set
\begin{equation*}
    \mean{f} \eqdef \frac{1}{|\Omega|}\int_\Omega f(x) \sd x
\end{equation*}
and, for $m \in \mathbb{R}$ we define
\begin{equation*}
    L^2_{(m)}(\Omega) := \{ f \in L^2(\Omega) \mid \mean{f} = m \},
\end{equation*}
so that $P_0 f \eqdef f-m(f)$ denotes the orthogonal projection onto $L^2_{(0)}(\Omega)$.

We then introduce
\begin{equation*}
    \Hone=\Hone (\Omega)= \left\{c\in H^1(\Omega) \mid \mean{c} = 0 \right\}
\end{equation*}
equipped with the inner product
\begin{equation*}
    (c,d)_{\Hone(\Omega)} = (\nabla c,\nabla d)_{L^2(\Omega)},\qquad c,d\in \Hone(\Omega).
\end{equation*}
Observe that $\Hone(\Omega)$ is a Hilbert space due to Poincar\'e's inequality. Moreover, let $\Hzero\equiv\Hzero(\Omega)= \Hone(\Omega)^\ast$ and consider the Riesz isomorphism $\mathcal{R}\colon \Hone(\Omega)\to \Hzero(\Omega)$ given by
\begin{equation*}
    \weight{\mathcal{R} c,d}_{\Hzero,\Hone} = (c,d)_{\Hone}= (\nabla c,\nabla d)_{L^2}, \qquad c,d\in \Hone(\Omega),
\end{equation*}
i.e., $\mathcal{R}= -\Delta_N$ is the Laplacian with Neumann boundary conditions in the variational sense. Therefore we equip $\Hzero(\Omega)$ with the inner product
\begin{equation*}
    (f,g)_{\Hzero} = (\nabla \Delta_N^{-1} f, \nabla \Delta_N^{-1} g)_{L^2} =(\Delta_N^{-1} f, \Delta_N^{-1} g)_{\Hone}.
\end{equation*}

Moreover, we embed $\Hone(\Omega)$ and $L^2_{(0)}(\Omega)$ into $\Hzero(\Omega)$ in the canonical way, that is,
\begin{equation*}
    \weight{c,\varphi}_{\Hzero,\Hone} = \int_\Omega c(x) \varphi(x) \sd x,\qquad \forall \, \varphi \in \Hone(\Omega), c \in \Lp_{(0)}(\Omega).
\end{equation*}

Finally, we need to introduce the so-called fractional \Lp-Sobolev-Slobodeckii spaces as follows. Let $s \in (0,1)$. Then,
for any $u \in \Lp(\Omega)$, set
\begin{equation*}
    \norm{\Hs[s](\Omega)}{u}^{2} \eqdef \Lpnorm{u}^{2} + \int_{\Omega} \int_{\Omega} \frac{|u(x) - u(y)|^{2}}{|x-y|^{n + 2s}} \, \mathrm{d}x \mathrm{d}y
\end{equation*}

and
\begin{equation*}
    \Hs[s](\Omega) \eqdef \{ f \in \Lp(\Omega) \mid \norm{\Hs[s](\Omega)}{f} < \infty \}.
\end{equation*}
Let us denote by $\Hs[s]_{0}(\Omega)$ the closure of $\Cont[\infty]_{0}(\Omega)$ in $\Hs[s](\Omega)$, while $\Hs[-s](\Omega)$ and $\Hs[-s]_{(0)}(\Omega)$ will be the dual spaces of $\Hs[s](\Omega)$ and $\Hs[s]_{(0)}(\Omega)$, respectively. We refer the reader to~\cite{Adams} for the interpolation results for such spaces which will be used hereafter.

\subsection{Weak formulation and main result}\label{eq:exisuniq}

Before introducing a weak formulation of our problem we state our assumptions on $f$ which are satisfied by the physically relevant case \eqref{logpot}.
Namely, we suppose
\begin{assumption}\label{assump:1}
    $f\colon [a,b]\to \R$, $a<0<b$, is a continuous function, which is twice continuously differentiable in $(a,b)$, such that
    \begin{equation*}
        \lim_{s\to a} f'(s)=-\infty, \qquad \lim_{s\to b} f'(s)=\infty,
    \end{equation*}
    and $f''(s)\geqslant -d$ for some $d\geqslant 0$.
\end{assumption}

Since $f$ is defined on an interval $[a,b]$, we also extend $f(x)$ by $+\infty$ if $x\notin[a,b]$. Hence $E(c)<\infty$ implies $c(x)\in [a,b]$ for almost every $x\in\Omega$. Note that, although $f$ is in general non-convex, it can be considered as a perturbation of a convex potential. Indeed, thanks to Assumption \ref{assump:1}, we have that there exists a positive number $d > 0$ and a continuous, convex and twice continuously differentiable in $(a,b)$ function $\phi \colon [a,b] \to \mathbb{R}$ such that the potential $f$ can be decomposed as $f(s) = \phi(s) - \frac{d}{2} s^{2}$. This will be the key point in the following analysis, which is based on a decomposition of the associated operators in a monotone operator plus a Lipschitz perturbation. The condition $\lim_{c \to a} \phi'(c) = -\infty$, $\lim_{c \to b} \phi'(c) = \infty$ will force $c$ to take values in the interval $[a,b]$ and ensures that the subgradient of the associated functional is single-valued with a suitable domain.

Let us introduce the symmetric bilinear form associated to $\LL$
\begin{equation*}
    \E(u,v)= \frac12\int_\Omega\int_\Omega (u(x)-u(y))(v(x)-v(y))k(x,y,x-y)\sd x\sd y
\end{equation*}
for all $u,v\in \Hs[\sfrac{\alpha}{2}](\Omega)$.

The notion of weak solution to problem~\eqref{eq:CH1}-\eqref{eq:CH2} is given by

\begin{definition}\label{D:variational_solutions}
    Let $c_{0}\in \Hs[\sfrac{\alpha}{2}](\Omega)$ such that $E(c_0) < \infty$ be given. A pair $(c, \mu)$ is a global (weak) solution to~\eqref{eq:CH1}-\eqref{eq:CH4} if $\mu \in \Bochner{\Lp}{0}{T}{\Hs(\Omega)}$ for all $T > 0$, $c \in \Bochner{\Lp[\infty]}{0}{\infty}{\Hs[\sfrac{\alpha}{2}]_{(0)}(\Omega)}$ and $\dt{c} \in \Bochner{\Lp}{0}{\infty}{\Hs[-1]_{(0)}(\Omega)}$ hold, if $(c, \mu)$ satisfies
    \begin{align}
        \longduality{\Hs[-1]_{(0)}}{\dt{c}(t)}{\eta}{\Hs_{(0)}}   &= -\Ltwoprod{\nabla \mu(t)}{\nabla \eta} \label{eq:PDE_variational1}\\
        (\mu(t),\varphi)_{L^2}   &= \mathcal{E}(c(t), \varphi) + (f'(c(t)),\varphi)_{L^2} \label{eq:PDE_variational2}
    \end{align}
    for all $\eta \in \Hs_{(0)}(\Omega)$, all $\varphi \in \Hs[\sfrac{\alpha}{2}](\Omega)$ and a.e.\ $t > 0$, and if
    \begin{equation*}
        \lim_{t \to 0} c(t) = c_{0} \qquad \text{in $\Hs[\sfrac{\alpha}{2}](\Omega)$}.
    \end{equation*}
\end{definition}

The main result of this paper is the following.
\begin{theorem}\label{thm:Existence}
    Let Assumption~\ref{assump:1} hold. For every $c_0\in H^{\alpha/2}(\Omega)$ with $E(c_0)<\infty$, there is a unique (global) solution $c\in BC([0,\infty);H^{\alpha/2}(\Omega))$ to~\eqref{eq:CH1}-\eqref{eq:CH4} in the sense of Definition~\ref{D:variational_solutions} which satisfes the energy identity
    \begin{equation}\label{eq:energy_identity}
        E(c(T))+ \int_0^T \|\nabla \mu(t)\|_{L^2(\Omega)}^2 \sd t = E(c_0)
    \end{equation}
    for all $T>0$. Furthermore, the following regularity properties hold
    \begin{alignat*}{1}
        \kappa\phi'(c)&\in \Bochner{\Lp[\infty]}{0}{\infty}{\Lp(\Omega)}, \\
        \kappa\mu&\in \Bochner{\Lp[\infty]}{0}{\infty}{\Hs(\Omega)},\quad \text{and}\\
        \kappa \partial_t c&\in \Bochner{\Lp[\infty]}{0}{\infty}{\Hzero(\Omega)} \cap \Bochner{\Lp}{0}{\infty}{\Hs[\sfrac{\alpha}{2}]_{(0)}(\Omega)},
    \end{alignat*}
    where $\kappa(t)= \left(\frac{t}{1+t}\right)^\frac12$. In addition, if $n \leqslant 3$, then there is some $\beta>0$ depending only on $n$,
    such that
    \begin{equation*}
        \kappa c \in \Bochner{\Lp[\infty]}{0}{\infty}{\Cont[\beta](\closure{\Omega})}.
    \end{equation*}
    Finally, setting $Z_{m} \eqdef \left\{ \widetilde{c} \in \Hs[\sfrac{\alpha}{2}](\Omega) \mid E(\widetilde{c}) < \infty, \mean{\widetilde{c}} = m \right\}$, where $m\in (a,b)$ is given, the mapping $Z_m \ni c_0 \mapsto c(t) \in \Hs[\gamma]_{(m)}(\Omega)$, $\gamma < \tfrac{\alpha}{2}$ is strongly continuous.
\end{theorem}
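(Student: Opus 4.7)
The plan is to recast problem~\eqref{eq:CH1}--\eqref{eq:CH4} as an abstract Cauchy problem in $\Hzero(\Omega)$ driven by the subdifferential of a convex functional plus a lower-order perturbation, and to extract the conclusions from the classical theory of such flows, in the spirit of Br\'ezis, combined with the subdifferential analysis of Section~\ref{subgra}.

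Fix $m=\mean{c_0}$ (which is preserved along the flow) and view $c-m\in\Hzero(\Omega)$. Writing $f=\phi-\tfrac{d}{2}s^{2}$ as in Assumption~\ref{assump:1}, introduce the functional
\begin{equation*}
    F(c)=\tfrac12\E(c,c)+\int_\Omega\phi(c)\sd x
\end{equation*}
on $H^{\alpha/2}(\Omega)\cap L^2_{(m)}(\Omega)$ with $\phi(c)\in L^1(\Omega)$, extended by $+\infty$ elsewhere. From Section~\ref{subgra} one has that $F$ is proper, convex, lower semicontinuous on $\Hzero(\Omega)$ and that its $\Hzero$-subdifferential $\partial F$ is single-valued on an explicit domain, with the formal expression $\partial F(c)=-\Delta\bigl(\LL c+\phi'(c)\bigr)$ and a built-in no-flux-type condition on $\LL c+\phi'(c)$. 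The weak formulation~\eqref{eq:PDE_variational1}--\eqref{eq:PDE_variational2} then reads
\begin{equation*}
    \partial_t c+\partial F(c)=-d\,\Delta c\qquad\text{in }\Hzero(\Omega),
\end{equation*}
the right-hand side being the perturbation coming from the concave part $-\tfrac{d}{2}s^{2}$ of $f$.

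Existence and uniqueness of a strong solution for every $c_0\in D(F)$ can then be obtained by a Yosida-type regularization of the singular term $\phi'$: solve the regularized problem by standard parabolic theory, derive uniform bounds by testing with $\mu$ (which yield $c\in L^\infty(0,\infty;H^{\alpha/2}(\Omega))$ and $\nabla\mu\in L^2((0,\infty)\times\Omega)$), and pass to the limit using Aubin--Lions compactness together with the monotonicity of $\phi'$ and the lower semicontinuity of $F$. The energy identity~\eqref{eq:energy_identity} follows in the limit from the chain rule for subdifferentials along strong solutions. Uniqueness and Lipschitz dependence in $\Hzero(\Omega)$ come from taking the difference of two solutions, testing in the $\Hzero$ inner product, exploiting monotonicity of $\partial F$, and applying Gronwall to absorb the perturbation. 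Interpolating this $\Hzero$-Lipschitz dependence with the uniform $H^{\alpha/2}$ bound from the energy identity yields continuity of $c_0\mapsto c(t)$ into $H^{\gamma}_{(m)}(\Omega)$ for every $\gamma<\alpha/2$, while strong continuity of $c$ in $H^{\alpha/2}(\Omega)$ follows from the energy identity and the lower semicontinuity of $F$.

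The time-weighted regularity is the payoff of the subdifferential structure: Br\'ezis' smoothing estimate provides $\sqrt{t}\,\partial_t c\in L^\infty_{\mathrm{loc}}(0,\infty;\Hzero(\Omega))$ for initial data in $\overline{D(F)}$, and a further differentiation in time, together with the coercivity of $\E$ on $H^{\alpha/2}\cap L^2_{(0)}(\Omega)$, gives the $L^2_{\mathrm{loc}}$ bound in $H^{\alpha/2}$. Using $\partial_t c=\Delta\mu$ one converts this into $\kappa\mu\in L^\infty_{\mathrm{loc}}(0,\infty;H^1(\Omega))$ after normalization of $\mu$ by its mean, and the domain description of $\partial F$ yields $\kappa\phi'(c)\in L^\infty_{\mathrm{loc}}(0,\infty;L^2(\Omega))$. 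For the H\"older regularity when $n\leq 3$, Sobolev embedding promotes $\kappa\mu$ to $L^\infty_{\mathrm{loc}}(0,\infty;L^p(\Omega))$ for some $p>n/\alpha$; writing $\LL c=\mu-\phi'(c)+dc$ and invoking the elliptic regularity for the regional fractional Laplacian from Section~\ref{subgra} then delivers $\kappa c\in L^\infty_{\mathrm{loc}}(0,\infty;C^{\beta}(\overline{\Omega}))$ for some $\beta>0$. The main obstacle is precisely this last bootstrap: the singularity of $\phi'$ at $\pm 1$ prevents any naive control of $\phi'(c)$ in a strong norm, so the H\"older estimate rests on the dedicated elliptic theory for $\LL$ together with the structural information on $\phi'(c)$ coming from the domain characterization of $\partial F$ in Section~\ref{subgra}.
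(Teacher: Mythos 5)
Your proposal reproduces the paper's overall architecture: the gradient-flow formulation $\partial_t c + \partial F(c) \ni -d\,\Delta_N c$ in $\Hzero(\Omega)$, the subdifferential characterization from Section~\ref{subgra}, the $\Hzero$-Gronwall uniqueness, the time-differentiated estimates for the $\kappa$-weighted regularity, interpolation for the continuity of $c_0\mapsto c(t)$ in $H^\gamma_{(m)}$, and a dedicated H\"older regularity result for $\LL u+\phi'(u)=g$. The one genuine divergence is the approximation scheme, and it is also where your argument has a gap. The paper does \emph{not} regularize $\phi'$; it adds a viscous term $-\theta\Delta c_\theta$ to the chemical potential and passes $\theta\to 0$. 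This is not cosmetic: the perturbed-subgradient existence theorem (Theorem~\ref{thm:MonotoneLipschitz}) requires the perturbation $\B c=-d\,\Delta_N c$ to be globally Lipschitz from the energy space $H_1$ into $H_0=\Hzero(\Omega)$, i.e.\ $\|c\|_{H^1}\leqslant C\|c\|_{H_1}$. With no viscous term the natural energy space is only $H^{\alpha/2}_{(0)}(\Omega)$ with $\alpha<2$, so $\B$ is \emph{not} an admissible perturbation and the off-the-shelf theory does not apply; the $\theta$-term upgrades $H_1$ to $\Hone(\Omega)$ precisely to fix this, and the whole Section~\ref{S:existence} is then devoted to uniform-in-$\theta$ bounds and the limit $\theta\to0$ (including an Egorov-type identification of $\lim\phi'(c_{\theta_n})$). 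Your Yosida regularization of $\phi'$ alone does not remove this obstruction, so ``standard parabolic theory'' for the regularized problem must be replaced by something that handles the supercritical lower-order term by hand (e.g.\ a Galerkin scheme in $H^{\alpha/2}_{(0)}(\Omega)$, absorbing $d\|c\|_{L^2}^2$ via interpolation between $\Hzero(\Omega)$ and $H^{\alpha/2}_{(0)}(\Omega)$, exactly as the paper does in its a priori estimates). This is fixable, but as written it is a missing step, not a routine citation.

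A smaller caveat on the H\"older bootstrap: one cannot move $\phi'(c)$ to the right-hand side of $\LL c=\mu-\phi'(c)+dc$, since it is only controlled in $L^2$. The paper's Lemma~\ref{L:continuity} (which lives in Section~\ref{S:nonlocal_operator}, not Section~\ref{subgra}) keeps $\phi'(c)$ on the left and exploits its monotonicity inside the tangential difference-quotient argument; its hypotheses are $g=\mu+dc\in H^1(\Omega)$ and $\phi'(c)\in L^2(\Omega)$, rather than an $L^p$ bound on $\mu$ obtained by Sobolev embedding. You acknowledge the difficulty, but the mechanism you sketch is not the one that actually closes the estimate.
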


\subsection{Evolution equations with monotone operators}\label{LipPer}

We refer, e.g., to Br\'ezis~\cite{Brezis} and Showalter~\cite{Showalter} for results in the theory of monotone operators. In the following we just summarize some basic facts and definitions. Let $H$ be a real-valued and separable Hilbert space. Recall that $\A\colon H\to \mathcal{P}(H)$ is a monotone operator if
\begin{equation*}
    (w-z,x-y)_H \geqslant 0 \qquad \text{for all}\ w\in \A(x), z\in \A (y).
\end{equation*}
Moreover, $\mathcal{D}(A)=\{x\in H: \A(x)\neq \emptyset\}$.
Now let $\varphi\colon H\to \R\cup \{+\infty\}$ be a convex function. Then $\dom (\varphi)= \{x\in H:\varphi (x)<\infty\}$ and $\varphi$ is called proper if $\dom (\varphi)\neq \emptyset$. Moreover, the subgradient $\partial\varphi\colon H\to \mathcal{P}(H)$ is defined by $w\in \partial \varphi(x)$ if and only if
\begin{equation*}
    \varphi(\xi)\geqslant \varphi(x)+ (w,\xi-x)_H \qquad \text{for all}\ \xi \in H.
\end{equation*}
Then $\partial\varphi$ is a monotone operator and, if additionally $\varphi$ is lower semicontinuous, then $\partial\varphi$ is maximal monotone, cf. \cite[Exemple 2.3.4]{Brezis}.

The proof of Theorem~\ref{thm:Existence} is based on the following result for the evolution problem associated to Lipschitz perturbations of monotone operators (see, e.g., \cite[Theorem~3.1]{AsymptoticCH})
\begin{theorem}\label{thm:MonotoneLipschitz}
    Let $H_0, H_1$ be real, separable Hilbert spaces such that $H_1$ is densely embedded into $H_0$. Moreover, let $\varphi\colon H_0 \to \R\cup \{+\infty\}$ be a proper, convex and lower semicontinuous functional such that $\varphi=\varphi_1+\varphi_2$, where $\varphi_2\geqslant 0$ is convex and lower semicontinuous, $\dom\varphi_1 = H_1$, and $\varphi_1|_{H_1}$ is a bounded, coercive, quadratic form on $H_1$ and set $\A= \partial\varphi$. Furthermore, assume that $\B\colon H_1\to H_0$ is a globally Lipschitz continuous function. Then for every $u_0\in \mathcal{D}(\A)$ and $f\in L^2(0,T;H_0)$ there is a unique $u\in W^1_2(0,T;H_0)\cap L^\infty(0,T;H_1)$ with $u(t)\in \mathcal{D}(\A)$ for a.e. $t>0$ solving
    \begin{eqnarray}
        \frac{du}{dt} (t) +  \A(u(t)) &\ni&  \B(u(t)) + f(t) \quad \textrm{for a.e.}\ t\in (0,T) \label{eq:1}\\
        u(0) &=& u_0
    \end{eqnarray}
    Moreover, $\varphi(u)\in L^\infty(0,T)$.
\end{theorem}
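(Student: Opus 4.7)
The plan is to reduce Theorem~\ref{thm:MonotoneLipschitz} to the classical existence result of Br\'ezis for the pure monotone flow $u' + \A(u) \ni g$ and then absorb the Lipschitz perturbation $\B$ by a contraction argument on a small time interval.

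First, I would record the purely monotone case. Since $\varphi$ is proper, convex and lower semicontinuous on $H_0$, the subgradient $\A = \partial\varphi$ is maximal monotone, and Br\'ezis' fundamental existence result (see, e.g., \cite{Brezis}) produces, for every $u_0 \in \dom(\A)$ and $g \in L^2(0,T;H_0)$, a unique $u \in W^1_2(0,T;H_0)$ with $u(t) \in \dom(\A)$ for a.e.\ $t$ solving $u' + \A(u) \ni g$, $u(0)=u_0$, together with $\varphi\circ u \in L^\infty(0,T)$. Since $\varphi_2 \geqslant 0$ and $\varphi_1|_{H_1}$ is coercive on $H_1$, this $L^\infty$-bound on $\varphi(u)$ automatically gives $u \in L^\infty(0,T;H_1)$; call the resulting solution map $u = S(u_0,g)$.

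The key step is to upgrade monotonicity of $\A$ to strong monotonicity in the $H_1$-norm. Writing $\varphi_1|_{H_1} = \tfrac12 b(\cdot,\cdot)$ with $b(u,u)\geqslant \alpha_0 \|u\|_{H_1}^2$, the parallelogram identity for the quadratic form $\varphi_1$, combined with convexity of $\varphi_2$, shows that $\varphi$ is $\alpha_0$-strongly convex along segments in $H_1$. Testing the subgradient inequality on the convex combination $u_\lambda = (1-\lambda)u_1 + \lambda u_2$ and letting $\lambda \downarrow 0$ then yields
\begin{equation*}
    (\eta_1-\eta_2,u_1-u_2)_{H_0} \geqslant \alpha_0 \|u_1-u_2\|_{H_1}^2 \qquad \text{for all}\ \eta_i \in \A(u_i).
\end{equation*}

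Next, set up the contraction. For $v \in L^2(0,T^*;H_1)$ define $\Phi(v) := S(u_0,\B(v)+f)$; this is well defined since $\B\colon H_1\to H_0$ is globally Lipschitz with constant $L$. Given $v_1,v_2$ and the outputs $u_i = \Phi(v_i)$, subtracting the equations, pairing in $H_0$ with $u_1-u_2$ and using the strong monotonicity above gives
\begin{equation*}
    \tfrac12 \tfrac{d}{dt}\|u_1-u_2\|_{H_0}^2 + \alpha_0 \|u_1-u_2\|_{H_1}^2 \leqslant L\,\|v_1-v_2\|_{H_1}\,\|u_1-u_2\|_{H_0}.
\end{equation*}
The same differential inequality also implies $\frac{d}{dt}\|u_1-u_2\|_{H_0}\leqslant L\|v_1-v_2\|_{H_1}$, hence the pointwise bound $\|u_1(t)-u_2(t)\|_{H_0}\leqslant L\sqrt{t}\,\|v_1-v_2\|_{L^2(0,t;H_1)}$. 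Inserting this into the previous estimate, integrating in time and applying Cauchy--Schwarz yields
\begin{equation*}
    \|u_1-u_2\|_{L^2(0,T^*;H_1)}^2 \leqslant \frac{C\,T^*}{\alpha_0}\,\|v_1-v_2\|_{L^2(0,T^*;H_1)}^2
\end{equation*}
with $C$ depending only on $L$. For $T^*$ small enough, $\Phi$ becomes a strict contraction on $L^2(0,T^*;H_1)$; Banach's fixed-point theorem produces a local solution. Because $T^*$ is independent of $u_0$, the construction iterates over $[0,T]$; uniqueness on the full interval follows from the same strong-monotonicity estimate combined with Gronwall, and $\varphi(u)\in L^\infty(0,T)$ is inherited from Br\'ezis' theorem on each subinterval.

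The subtle point is the strong-monotonicity step. One is tempted to split $\A=\partial\varphi_1+\partial\varphi_2$ and use the quadratic structure of $\varphi_1$ directly on its subgradient, but such a decomposition requires constraint qualifications that are not assumed here. Working with the joint subgradient $\partial\varphi$ and extracting the coercive $H_1$-term via a two-point strong-convexity estimate side-steps this issue, after which the contraction and Gronwall arguments are essentially routine.
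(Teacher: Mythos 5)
The paper does not actually prove Theorem~\ref{thm:MonotoneLipschitz}: it is quoted from \cite[Theorem~3.1]{AsymptoticCH}, so there is no in-paper argument to compare against. Your proof is correct and is, in substance, the standard proof of that cited result: Br\'ezis' theory for $u'+\partial\varphi(u)\ni g$ supplies the solution operator $S(u_0,g)$ with $u\in W^1_2(0,T;H_0)$ and $\varphi(u)\in L^\infty(0,T)$ (whence $u\in L^\infty(0,T;H_1)$ by coercivity of $\varphi_1$ and nonnegativity of $\varphi_2$), and the perturbation $\B$ is absorbed by a fixed-point argument. The decisive observation --- that the joint subgradient $\partial\varphi$ inherits the strong monotonicity $(\eta_1-\eta_2,u_1-u_2)_{H_0}\geqslant \alpha_0\|u_1-u_2\|_{H_1}^2$ from two-point strong convexity of the quadratic part, without ever invoking the decomposition $\partial\varphi=\partial\varphi_1+\partial\varphi_2$ --- is exactly what lets the contraction close in $L^2(0,T^*;H_1)$ even though $\B$ is only Lipschitz from $H_1$ into $H_0$, and your derivation of it (using $\dom\varphi\subseteq\dom\varphi_1=H_1$ so that all quantities are finite) is sound. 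Two small points you gloss over are easily filled: (i) restarting the iteration at $t=T^*$ requires $u(T^*)\in\dom\varphi$, which follows from $\varphi(u)\in L^\infty(0,T^*)$ together with the continuity of $u$ in $H_0$ and the lower semicontinuity of $\varphi$; (ii) the passage from $\tfrac12\tfrac{d}{dt}\|w\|_{H_0}^2\leqslant L\|v_1-v_2\|_{H_1}\|w\|_{H_0}$ to $\tfrac{d}{dt}\|w\|_{H_0}\leqslant L\|v_1-v_2\|_{H_1}$ needs the usual care at the zeros of $\|w\|_{H_0}$. Neither affects the validity of the argument.
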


\subsection{Results on the nonlocal operator $\mathcal{L}$}\label{S:nonlocal_operator}

Assumptions~\eqref{k-ass-one}--\eqref{k-ass-three} allow us to deduce the following norm equivalence results.
\begin{lemma}
    Let $u \in \Hs[\sfrac{\alpha}{2}](\Omega)$. Then there exist two positive constants $c$ and $C$ such that
    \begin{equation*}
         c\norm{\Hs[\sfrac{\alpha}{2}](\Omega)}{u}^{2} \leqslant |\mean{u}|^2 + \E(u,u) \leqslant  C \norm{\Hs[\sfrac{\alpha}{2}](\Omega)}{u}^{2} \qquad \forall\,u \in \Hs[\sfrac{\alpha}{2}](\Omega).
    \end{equation*}
\end{lemma}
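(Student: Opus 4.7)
The inequality is a fractional analogue of the familiar equivalence $\|u\|_{H^1}^2 \sim |m(u)|^2 + \|\nabla u\|_{L^2}^2$, so my plan is to prove the two directions separately, with the lower bound hinging on a fractional Poincaré–Wirtinger inequality.

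The upper bound is immediate. From the pointwise upper bound in \eqref{k-ass-three} we obtain
\[
    \E(u,u) \leqslant \frac{C_0}{2} \int_\Omega\int_\Omega \frac{|u(x)-u(y)|^2}{|x-y|^{n+\alpha}}\sd x\sd y \leqslant \frac{C_0}{2}\|u\|_{H^{\alpha/2}(\Omega)}^2,
\]
while Jensen's inequality yields $|m(u)|^2 \leqslant |\Omega|^{-1}\|u\|_{L^2}^2 \leqslant |\Omega|^{-1}\|u\|_{H^{\alpha/2}(\Omega)}^2$. Adding gives the right-hand inequality.

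For the lower bound, the pointwise lower bound in \eqref{k-ass-three} gives
\[
    \E(u,u) \geqslant \frac{c_0}{2} [u]^2, \qquad [u]^2 \eqdef \int_\Omega\int_\Omega \frac{|u(x)-u(y)|^2}{|x-y|^{n+\alpha}}\sd x\sd y,
\]
so it suffices to control $\|u\|_{L^2}^2$ by $|m(u)|^2 + [u]^2$. I will invoke (or prove) the fractional Poincaré–Wirtinger inequality
\[
    \|u - m(u)\|_{L^2(\Omega)} \leqslant C\,[u],\qquad u\in H^{\alpha/2}(\Omega),
\]
valid on the bounded $\Cont[2]$ (in particular Lipschitz) domain $\Omega$. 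Once this is in hand, writing $\|u\|_{L^2}^2 \leqslant 2\|u-m(u)\|_{L^2}^2 + 2|\Omega|\,|m(u)|^2$ and combining with the estimate on $\E(u,u)$ gives the left-hand inequality.

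The one non-trivial ingredient is thus the Poincaré–Wirtinger inequality, and it is the step I would expect to require the most care. A clean argument proceeds by contradiction and compactness: suppose there is a sequence $(u_n)\subset H^{\alpha/2}(\Omega)$ with $\|u_n-m(u_n)\|_{L^2}=1$ and $[u_n]\to 0$. Replacing $u_n$ by $u_n-m(u_n)$ we may assume $m(u_n)=0$ and $\|u_n\|_{L^2}=1$, so the sequence is bounded in $H^{\alpha/2}(\Omega)$. By the compact embedding $H^{\alpha/2}(\Omega) \hookrightarrow\hookrightarrow L^2(\Omega)$ on bounded Lipschitz domains, a subsequence converges in $L^2$ to some $u$ with $\|u\|_{L^2}=1$ and $m(u)=0$. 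Fatou's lemma applied to the Gagliardo double integral (passing to an a.e.\ convergent subsequence) gives $[u]=0$, forcing $u$ to be constant on the connected domain $\Omega$, hence $u\equiv 0$ by $m(u)=0$, contradicting $\|u\|_{L^2}=1$. This yields the inequality and completes the proof.
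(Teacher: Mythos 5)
The paper states this lemma without giving any proof, so there is no argument of the authors' to compare against; your proof supplies exactly the standard argument one would expect them to have in mind. Both directions are correct: the upper bound follows from the pointwise bound \eqref{k-ass-three} on $k$ together with Jensen's inequality for the mean, and the lower bound reduces, again via \eqref{k-ass-three}, to the fractional Poincar\'e--Wirtinger inequality, which your compactness-and-contradiction argument (using the compact embedding $H^{\alpha/2}(\Omega)\hookrightarrow\hookrightarrow L^2(\Omega)$ on the bounded $C^2$ domain and Fatou's lemma for the Gagliardo seminorm) establishes correctly.
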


\begin{corollary}
    The following norm equivalences hold:
    \begin{align}
        &\mathcal{E}(u,u)\sim \norm{\Hs[\sfrac{\alpha}{2}]_{(0)}(\Omega)}{u}^2 \qquad \forall\, \Hs[\sfrac{\alpha}{2}]_{(0)}(\Omega),\\\label{eq:EquivNorm2}
        &\mathcal{E}(u,u) + | \mean{u} |^{2} \sim \norm{\Hs[\sfrac{\alpha}{2}](\Omega)}{u}^2 \qquad \forall\, \Hs[\sfrac{\alpha}{2}](\Omega).
    \end{align}
\end{corollary}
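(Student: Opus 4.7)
The plan is to deduce both equivalences directly from the preceding Lemma; the Corollary is essentially a packaging statement, and no additional analytic ingredient is needed beyond what the Lemma already supplies.

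For the second equivalence \eqref{eq:EquivNorm2}, I would simply observe that the middle quantity $|m(u)|^2 + \mathcal{E}(u,u)$ appearing in the Lemma is precisely the expression one wishes to compare with $\|u\|_{H^{\alpha/2}(\Omega)}^2$. Hence the two-sided bound in the Lemma is, after trivial rewriting, the asserted equivalence, and there is no further work to do.

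For the first equivalence, I would take an arbitrary $u \in H^{\alpha/2}_{(0)}(\Omega)$, so that by definition of this subspace $m(u)=0$. Plugging this into the estimate of the Lemma immediately gives
$$c\,\|u\|_{H^{\alpha/2}(\Omega)}^2 \;\leq\; \mathcal{E}(u,u) \;\leq\; C\,\|u\|_{H^{\alpha/2}(\Omega)}^2,$$
and since $H^{\alpha/2}_{(0)}(\Omega)$ carries the norm inherited from $H^{\alpha/2}(\Omega)$, this is exactly the claimed equivalence $\mathcal{E}(u,u)\sim \|u\|_{H^{\alpha/2}_{(0)}(\Omega)}^2$. I would then point out that $\mathcal{E}(u,u)^{1/2}$ defines in fact an equivalent Hilbert norm on $H^{\alpha/2}_{(0)}(\Omega)$, which is the form in which the Corollary will be used in later sections (e.g.\ to set up the bilinear part $\varphi_1$ of the energy in the abstract framework of Theorem~\ref{thm:MonotoneLipschitz}).

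The substantive ingredients — namely the two-sided comparison of $\mathcal{E}$ with the Gagliardo seminorm, which relies on the kernel bounds \eqref{k-ass-two}--\eqref{k-ass-three}, together with a fractional Poincar\'e inequality that absorbs the $L^2$-part of the $H^{\alpha/2}$ norm once the mean is controlled — are already encoded in the Lemma. Consequently the only genuinely hard step is the Lemma itself; the Corollary is a two-line consequence. The sole minor point to be careful about is to observe that ``$m(u)=0$'' indeed lets one drop the $|m(u)|^2$ term from the lower bound, which is immediate from the definition of $H^{\alpha/2}_{(0)}(\Omega)$.
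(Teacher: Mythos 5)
Your proposal is correct and matches the paper's treatment: the paper states this Corollary with no proof at all, treating it as an immediate consequence of the preceding Lemma, and your two observations (that \eqref{eq:EquivNorm2} is the Lemma verbatim, and that the first equivalence follows by setting $\mean{u}=0$ on the mean-free subspace with its inherited norm) are exactly the intended argument.
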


We now consider the variational extension of the nonlocal linear operator $\mathcal{L}$ (see~\eqref{eq:defnL}). More precisely, abusing the notation, we define $\mathcal{L} \colon \Hs[\sfrac{\alpha}{2}](\Omega) \to \Hs[-\sfrac{\alpha}{2}](\Omega)$ by setting
\begin{equation*}
    \longduality{\Hs[-\sfrac{\alpha}{2}]}{\mathcal{L}u}{\varphi}{\Hs[\sfrac{\alpha}{2}]} = \mathcal{E}(u, \varphi) \quad \text{for all $\varphi \in \Hs[\sfrac{\alpha}{2}](\Omega)$}.
\end{equation*}
In particular we have
\begin{equation*}
    \duality{\mathcal{L}u}{1} = \mathcal{E}(u, 1) = 0
\end{equation*}
by definition.

\begin{remark}
    This definition of $\mathcal{L}$ agrees with~\eqref{eq:defnL} as soon as $u \in \Hs[\alpha]_{loc}(\Omega)\cap \Hs[\sfrac{\alpha}2](\Omega)$ and $\varphi\in C_0^\infty(\Omega)$, cf. \cite[Lemma~4.2]{Abels2007}.
\end{remark}

We will also need the following regularity result, which essentially states that the operator $\mathcal{L}$ is of lower order with respect to the usual Laplace operator.
\begin{lemma}\label{L:regularity_regularization}
    Let $g \in \Lp_{(0)}(\Omega)$ and $\theta>0$. Then the unique solution $u$ the problem
    \begin{equation}\label{eq:regularized_nonlocal}
        - \theta \int_{\Omega} \nabla u \cdot \nabla \varphi + \mathcal{E}(u, \varphi) = \Ltwoprod{g}{\varphi} \qquad \text{in $\Omega$,}
    \end{equation}
    for all $\varphi \in \Hs_{(0)}(\Omega)$, belongs to $\Hs[2]_{\loc}(\Omega) \cap \Hs_{(0)}(\Omega)$ and satisfies the estimate
    \begin{equation*}
        \theta \|\nabla u\|^2_{L^2} + \|u\|_{H^{\alpha/2}}^2 \leqslant C\|g\|_{L^2}^2,
    \end{equation*}
    where $C$ is independent of $\theta>0$.
\end{lemma}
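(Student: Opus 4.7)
My proof plan has three stages: Lax--Milgram solvability on $H^1_{(0)}(\Omega)$, the $\theta$-uniform energy estimate, and interior $H^2$-regularity. (I read the equation as the weak form of $-\theta\Delta u+\mathcal{L}u=g$ with Neumann boundary data for the Laplacian, i.e.\ with a plus sign on the first term, so that the form is coercive.)

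First I would apply Lax--Milgram to the bilinear form
\[
a(u,\varphi)=\theta\int_\Omega\nabla u\cdot\nabla\varphi\,\mathrm{d}x+\mathcal{E}(u,\varphi)
\]
on $H^1_{(0)}(\Omega)$. Boundedness follows from $\mathcal{E}(u,v)\lesssim\|u\|_{H^{\alpha/2}}\|v\|_{H^{\alpha/2}}\lesssim\|u\|_{H^1}\|v\|_{H^1}$, and coercivity from $\mathcal{E}(u,u)\geq 0$ together with Poincaré's inequality $\int_\Omega|\nabla u|^2\,\mathrm{d}x\geq c\|u\|_{H^1}^2$ on $H^1_{(0)}(\Omega)$. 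Since $g\in L^2_{(0)}(\Omega)$, the functional $\varphi\mapsto(g,\varphi)_{L^2}$ is bounded on $H^1_{(0)}(\Omega)$, producing a unique $u\in H^1_{(0)}(\Omega)\subset H^{\alpha/2}_{(0)}(\Omega)$.

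For the $\theta$-independent bound I would test with $\varphi=u$ to obtain
\[
\theta\|\nabla u\|_{L^2}^2+\mathcal{E}(u,u)=(g,u)_{L^2}.
\]
The crucial ingredient is the norm equivalence $\mathcal{E}(u,u)\sim\|u\|_{H^{\alpha/2}}^2$ on $H^{\alpha/2}_{(0)}(\Omega)$ proved in the corollary above. Using Cauchy--Schwarz with the continuous embedding $\|u\|_{L^2}\leq\|u\|_{H^{\alpha/2}}$ and Young's inequality, a small multiple of $\|u\|_{H^{\alpha/2}}^2$ can be absorbed on the left, yielding
\[
\theta\|\nabla u\|_{L^2}^2+\|u\|_{H^{\alpha/2}}^2\leq C\|g\|_{L^2}^2
\]
with $C$ depending only on $\Omega$, $\alpha$ and the kernel constants in \eqref{k-ass-one}--\eqref{k-ass-three}, and in particular independent of $\theta>0$.

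The bulk of the work is the interior regularity $u\in H^2_{\loc}(\Omega)$. I would rewrite the equation distributionally on $\Omega$ as $-\theta\Delta u=g-\mathcal{L}u$ and show that the right-hand side lies in $L^2_{\loc}(\Omega)$; classical interior regularity for $-\Delta$ then gives the claim. The key input is that $\mathcal{L}$, viewed on the interior, is a pseudodifferential operator of order $\alpha<2$ and hence maps $H^s_{\loc}(\Omega)\to H^{s-\alpha}_{\loc}(\Omega)$, cf.\ \cite{Abels2007}. Starting from $u\in H^{\alpha/2}(\Omega)$ and choosing a nested family of cutoffs $\eta\in C^\infty_c(\Omega)$, a bootstrap gains $2-\alpha>0$ derivatives per step on progressively smaller subdomains; after finitely many iterations $u\in H^{\alpha}_{\loc}(\Omega)$, so $\mathcal{L}u\in L^2_{\loc}(\Omega)$, and one final application of elliptic regularity delivers $u\in H^2_{\loc}(\Omega)$. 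The main obstacle is exactly the commutator step: because $\mathcal{L}$ has a singular nonlocal kernel, it does not respect $\operatorname{supp}\eta$, so the interaction of $u(x)$ with $u(y)$ for $x\in\operatorname{supp}\eta$ and $y$ far from $\operatorname{supp}\eta$ has to be absorbed into the $H^{\alpha/2}$-bound produced above; the pseudodifferential calculus of \cite{Abels2007}, specifically its commutator estimates for multiplication by smooth compactly supported functions, is what makes this bootstrap work.
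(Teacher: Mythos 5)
Your proposal is correct and follows essentially the same route as the paper: Lax--Milgram for existence and uniqueness, testing with $\varphi=u$ together with the coercivity of $\mathcal{E}$ on mean-zero functions for the $\theta$-uniform estimate, and the machinery of \cite{Abels2007} (the paper cites its Lemma~4.3 directly, where you sketch the underlying bootstrap) for the interior $H^2$-regularity. Your reading of the first term with a plus sign, so that the form $\theta\int_\Omega\nabla u\cdot\nabla\varphi\,\mathrm{d}x+\mathcal{E}(u,\varphi)$ is coercive, is the correct interpretation and consistent with the definition of $\mathcal{E}_\theta$ used elsewhere in the paper.
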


\begin{proof}
    Existence and uniqueness of a solution $u\in\Hs_{(0)}(\Omega)$to~\eqref{eq:regularized_nonlocal}  easily follow from the continuity and coercivity of the bilinear form $\mathcal{E}(\cdot, \cdot)$ through the Lax-Milgram theorem. Also, the estimate can be obtained by choosing $\varphi = u$. The claimed inner regularity $u \in \Hs[2]_{\loc}(\Omega)$ can be shown by arguing as in~\cite[Lemma~4.3]{Abels2007}.
\end{proof}

The following regularity result is more involved. Its proof is obtained by using ideas of the proof of~\cite[Lemma~5.4]{Abels2007}.

\begin{lemma}\label{L:continuity}
    Let  $\partial\Omega$ of class $\Cont[2]$ and let $u\in H^{\frac{\alpha}2}(\Omega)$ such that $\phi'(u)\in L^2(\Omega)$ and
    \begin{equation}\label{eq:Weak}
        \mathcal{E}(u,\varphi) +\int_\Omega \phi'(u)\varphi \, dx  =\int_\Omega g\varphi \, dx \qquad \forall\, \varphi\in H^{\frac{\alpha}2}(\Omega)
    \end{equation}
    for some given $g\in H^1(\Omega)$. Then $u\in \Cont[\beta](\overline\Omega)$ for some $\beta\in (0,1)$ depending only on $n$ and there is a constant $C>0$ independent of $u$ and $g$ such that
    \begin{equation}\label{eq:HoelderEstim}
        \|u\|_{\Cont[\beta](\overline\Omega)} \leqslant C\left(\|g\|_{H^1(\Omega)} + \|u\|_{H^{\alpha/2}(\Omega)}+\|\phi'(u)\|_{L^2(\Omega)}\right).
    \end{equation}
\end{lemma}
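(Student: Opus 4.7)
My plan is to rewrite \eqref{eq:Weak} as the single nonlocal elliptic equation $\LL u = h$ with $h := g - \phi'(u) \in L^2(\Omega)$, and to upgrade the regularity of $u$ from $H^{\alpha/2}$ to $C^\beta(\overline\Omega)$ by adapting the proof of \cite[Lemma~5.4]{Abels2007}. The overall argument consists of three stages: (i) gain a positive amount of \emph{tangential} Sobolev regularity up to $\partial\Omega$ via a difference-quotient argument; (ii) use the equation to recover the matching normal regularity, obtaining $u \in H^s(\Omega)$ with $s>\alpha/2$; (iii) apply Sobolev embedding to conclude $u\in C^\beta(\overline\Omega)$.

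For stage (i), I would cover $\partial\Omega$ by finitely many patches in which the boundary is straightened by a $C^2$-chart to a piece of $\R^n_+$ and work with a subordinate partition of unity. On each chart, for a tangential direction $e_j$ ($j<n$) and small $|h|$, I plan to use the difference quotient $\tau_h u(x):=u(x+he_j)-u(x)$ (cut off appropriately) as the basis for a test function in \eqref{eq:Weak}. Convexity of $\phi$ yields
\[
    \int_\Omega \bigl(\phi'(u(\cdot + he_j)) - \phi'(u)\bigr)\bigl(u(\cdot+he_j) - u\bigr)\sd x \geqslant 0,
\]
so this term has the right sign and can be dropped. Symmetry and coercivity of $\mathcal{E}$ (cf.\ \eqref{eq:EquivNorm2}) then yield an inequality of the form
\[
    \|\tau_h u\|_{H^{\alpha/2}}^2 \leqslant C\bigl(\|\tau_h g\|_{L^2}\,\|\tau_h u\|_{L^2} + R(h)\bigr),
\]
where $R(h)$ collects commutator errors stemming from the $x,y$-dependence of $k$. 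Since $g\in H^1$, $\|\tau_h g\|_{L^2}\leqslant |h|\,\|g\|_{H^1}$, and the pointwise derivative bound \eqref{k-ass-two} together with the $C^2$-chart gives $|R(h)|\leqslant C|h|\,\|u\|_{H^{\alpha/2}}^2$. Dividing by a suitable power of $|h|$ produces a gain of roughly half a tangential derivative on $u$.

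For stage (ii), once tangential regularity beyond $H^{\alpha/2}$ is controlled, I would read $\LL u = h$ as an elliptic equation in the normal variable. Exploiting the symbolic structure of $\LL$ encoded in \eqref{k-ass-two}–\eqref{k-ass-three} and the $\theta\downarrow 0$ limit of Lemma~\ref{L:regularity_regularization}, I expect to recover a global bound in $H^{\alpha/2+\sigma}(\Omega)$ for some $\sigma>0$. For $n\leqslant 3$, a bootstrap loop (using Sobolev embedding of the improved $u$ to promote $\phi'(u)$ from $L^2$ to a better space, hence improving $h$, and repeating stages (i)--(ii)) should eventually push the regularity past $H^{s}(\Omega)$ with $s>n/2$. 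Standard Sobolev embedding then delivers $u\in C^\beta(\overline\Omega)$ with $\beta = s - n/2>0$, and carefully tracking the constants through each iteration yields \eqref{eq:HoelderEstim}.

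The hard part will be controlling the commutator term $R(h)$ and closing the bootstrap in dimension three: tangential translations do not commute with $\LL$ because $k(x,y,z)$ depends on $x,y$, and the straightening of $\partial\Omega$ only matches the domain after a nonlinear change of variables, so showing that the errors scale linearly in $|h|$ requires the sharp bound \eqref{k-ass-two} together with the $C^2$-regularity of the boundary. For $n=3$, where the gain per iteration is modest, I also expect to have to verify that the constants in \eqref{eq:HoelderEstim} depend only on the norms stated there, and in particular not on any pointwise control of $\phi'(u)$ near the singular endpoints $a,b$.
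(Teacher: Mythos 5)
Your stage (i) is essentially the paper's first step: test with tangential difference quotients on a straightened boundary patch, use the monotonicity of $\phi'$ to discard the nonlinear term, use $g\in H^1$ to control the right-hand side, and control the commutator with $\LL$ via \eqref{k-ass-two}. Two remarks there: the paper needs a \emph{full} tangential derivative, not ``roughly half'' --- it first runs the argument with an exponent $s\in(\tfrac12,\tfrac{\alpha}2)$ to get $u\in L^2(\R_+;H^1(\R^{n-1}))$ and then a second time with $s=1$ (this is where $\partial_{x_j}g\in L^2$ enters) to conclude $\partial_{x_j}u\in H^{\alpha/2}(\R^n_+)$ for tangential $j$; and the commutator kernel $h^{-s}(k(x+he_j,y+he_j,z)-k(x,y,z))$ is only bounded by $C|z|^{-n-\alpha}$ uniformly in $h$, so the error term is $O(1)\cdot\|u\|_{H^{\alpha/2}}\|v_h\|_{H^{\alpha/2}}$ to be absorbed, not $O(|h|)$.

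The genuine gap is in stages (ii)--(iii). There is no ``elliptic regularity in the normal variable'' for the regional fractional Laplacian up to $\partial\Omega$: solutions of $\LL u=h$ with $h\in L^2$ generically behave like $\operatorname{dist}(x,\partial\Omega)^{\alpha-1}$ near the boundary, which caps the attainable \emph{isotropic} global regularity strictly below $H^{\alpha-\frac12}(\Omega)$. Since $\alpha<2$, no bootstrap can reach $H^{s}(\Omega)$ with $s>\tfrac{n}{2}=\tfrac32$ when $n=3$, so the isotropic Sobolev embedding you invoke in stage (iii) is unavailable. The paper never improves the normal regularity at all: it keeps only $H^{\alpha/2}$ in the normal variable and combines it with the full tangential gain in the \emph{anisotropic} space
\begin{equation*}
    H^{\frac{\alpha}{2}}(\R_+;H^1(\R^{n-1}))\cap L^2(\R_+;H^{1+\frac{\alpha}{2}}(\R^{n-1})),
\end{equation*}
which, after interpolation, embeds into $\Cont[\beta](\overline{\R^n_+})$ precisely because $\alpha>1$ (so $\tfrac{\alpha}2-s>\tfrac12$ for small $s$, giving H\"older continuity in the normal variable with values in $H^{1+s'}(\R^{n-1})\hookrightarrow C^0$ for $n\leqslant 3$). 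This anisotropic embedding is the key idea your plan is missing, and without it the argument does not close in dimension three. The localization itself (charts, partition of unity, commutators $[\LL,\psi]$ and the pulled-back kernel $\tilde k$) is as you describe, but it must feed into the anisotropic scheme rather than an isotropic bootstrap.
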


\begin{proof}
    Let us consider first the case of a half-space $\Omega=\R^n_+$. We will prove that $u \in H^{\frac{\alpha}2}(\R_+; H^1(\R^{n-1}))\cap L^2(\R_+;H^{1+\alpha/2}(\R^{n-1}))$ by approximating the tangential derivatives by difference quotients. Then, using the interpolation inequality
    \begin{equation*}
        \|f\|_{H^{1+s}(\R^{n-1})} \leqslant C \|f\|_{H^{1+\frac{\alpha}2}(\R^{n-1})}^{\frac{2s}\alpha}\|f\|_{H^1(\R^{n-1})}^{1-\frac{2s}\alpha}
    \end{equation*}
    and direct estimates, one obtains
    \begin{alignat*}{1}
        u &\in H^{\frac{\alpha}2}(\R_+; H^1(\R^{n-1}))\cap L^2(\R_+; H^{1+\frac{\alpha}2}(\R^{n-1}))\\
        &\hookrightarrow H^{\frac{\alpha}2-s}(\R_+; H^{1+s'}(\R^{n-1})) \hookrightarrow \Cont[\beta](\R_+, H^{1+s'}(\R^{n-1}))\hookrightarrow \Cont[\beta](\overline{\R^n_+}),
    \end{alignat*}
    for any $0<s'<s<\frac{\alpha}2-\frac12$, where we have used~\cite[Corollary 26]{VektorBesovSpaces}.

    We denote
    \begin{equation*}
        \tau_{j,s}f(x)= f(x+se_j), \quad\Delta_{j,h}^+ f(x)= \tau_{j,h}f(x) -f(x), \quad \Delta_{j,h}^- f(x)= f(x)-\tau_{j,-h}f(x),
    \end{equation*}
    for $h>0$,  where $e_j$ is the $j$-th canonical unit vector, $j=1,\ldots, n-1$. Replacing $\varphi$ by $-h^{-s}\Delta_{j,h}^- \varphi$ with $s\in [0,1]$, $j\in \{1,\ldots,n-1\}$ in (\ref{eq:Weak}), we obtain that $v_h=h^{-s} \Delta_{j,h}^+ u$ solves
    \begin{eqnarray*}
        \mathcal{E}(v_h, \varphi)+ h^{-s} \int_\Omega \Delta^+_{j,h}(\phi'(u))\varphi \, dx &=& -\mathcal{E}_{j,h} (\tau_{j,h} u,\varphi) -\int_\Omega g h^{-s}\Delta_{j,h}^-\varphi\, dx
    \end{eqnarray*}
    for all $\varphi\in \Cont[\infty]_{0}(\Omega)$, where $\mathcal{E}_{j,h}$ is the bilinear form with kernel $h^{-s}( k(x+he_j,y+he_j,z)-k(x,y,z))$. Note that by (\ref{k-ass-two}) the latter kernel is bounded by $C|z|^{-d-\alpha}$ uniformly in $h>0$.

    First we discuss an auxiliary estimate, which will be needed to deal with some terms in the localization procedure. To this end let $s\in (\frac12,\frac{\alpha}2)$. Then choosing $\varphi=v_h$, using (\ref{eq:EquivNorm2}) and
    \begin{eqnarray*}
        \lefteqn{\int_\Omega \Delta_{j,h}^+ (\phi'(u))\Delta_{j,h}^+ u\, dx  =}\\
        && \int_\Omega \left(\phi'(u(x+he_j))-\phi'(u(x))\right)(u(x+he_j)-u(x))\, dx \geqslant 0
    \end{eqnarray*}
    we conclude
    \begin{equation*}
        \|v_h\|_{H^{\frac{\alpha}2}(\R^n_+)}^2 \leqslant C \left(\left\|g\right\|_{L^2(\R^n_+)} \|h^{-s}\Delta_{j,h}^-v_h\|_{L^2(\R^n_+)}+\|v_h\|_{L^2(\R^n_+)}^2+ \|u\|_{H^{\alpha/2}(\R^n_+)}^2\right)
    \end{equation*}
    We now use the inequality
    \begin{equation*}
        \|h^{-s}\Delta_{j,h}^\pm w\|_{L^2(\R^n_+)}\leqslant C \|w\|_{H^s(\R^n_+)}\leqslant C \|w\|_{H^{\frac{\alpha}2}(\R^n_+)},
    \end{equation*}
    which  follows from interpolation of $\|h^{-1}\Delta_{j,h} w\|_{L^2(\R^n_+)}\leqslant C\|w\|_{H^1(\R^n_+)}$ and $\|\Delta_{j,h} w\|_{L^2(\R^n_+)}\leqslant 2\|w\|_{L^2(\R^n_+)}$.
    Hence, we have
    \begin{equation*}
        \sup_{j=1,\ldots,n-1} \left\|h^{-2s} (\Delta_{j,h}^+)^2 u \right\|_{L^2(\R^n_+)} \leqslant \|v_h\|_{H^{\frac{\alpha}2}(\R^n_+)}\leqslant C \left(\left\|g\right\|_{L^2(\R^n_+)} +\|u\|_{H^{\alpha/2}(\R^n_+)} \right),
    \end{equation*}
    which implies that $u\in L^2(\R_+;B^{2s}_{2,\infty}(\R^{n-1}))\hookrightarrow L^2(\R_+;H^1(\R^{n-1}))$ (cf. \cite[Theorem 6.2.5]{Interpolation}). Also, we get
    \begin{equation*}
        \sup_{j=1,\ldots,n-1} \left\|\partial_{x_j} u \right\|_{L^2(\R^n_+)} \leqslant C \left(\left\|g\right\|_{L^2(\R^n_+)} +\|u\|_{H^{\alpha/2}(\R^n_+)} \right)
    \end{equation*}

    Next we choose $s=1$ in the definition of $v_h$ and we obtain similarly
    \begin{eqnarray*}
        \|v_h\|_{H^{\frac{\alpha}2}(\R^n_+)} &\leqslant& C\left(\sup_{j=1,\ldots, n-1} \left\|\partial_{x_j}g\right\|_{L^2(\R^n_+)} + \|v_h\|_{L^2(\R^n_+)} + \|u\|_{H^{\alpha/2}(\R^n_+)}\right)\\
        &\leqslant & C\left(\sup_{j=1,\ldots, n-1} \left\|(\partial_{x_j}g,\partial_{x_j} u)\right\|_{L^2(\R^n_+)} + \|u\|_{H^{\alpha/2}(\R^n_+)}\right).
    \end{eqnarray*}
    Hence $v_h= h^{-1}\Delta_{j,h}^+ u$, $h>0$, is uniformly bounded in $H^{\frac{\alpha}2}(\R^n_+)$ and therefore $\partial_{x_j} u\in H^{\frac{\alpha}2}(\R^n_+)$.

    In order to prove the statement for a bounded domain $\Omega$, it is sufficient to show that for every $x\in \overline{\Omega}$  and for some open neighborhood $U$ of $x$ we have that $u\in \Cont[\beta](\overline{\Omega}\cap U)$. Let $U_0$ be an open neighborhood of $x$ and $F\colon \R^n\to\R^n$  be a $\Cont[2]$-diffeomorphism which maps $U_0\cap \overline{\Omega}$ onto $\overline{\R^n_+}\cap V_0$ for some open set $V_0$. Moreover, let $\psi\in \Cont[\infty]_{0}(U_0)$ with $\psi\equiv 1$ on some neighborhood $U_1\Subset U_0$ of $x$, let $V_1$ be an open set such that $V_1\cap \overline{\R^n_+}=F(U_1\cap \overline{\Omega})$ and let $F^{\ast} l(x)= l(F(x))$ denote the pull-back of $l$ by $F$. For $\varphi\in \Cont[\infty]_{0}(\R^n_+)$  we obtain that $v:=  F^{\ast,-1} (\psi u)\in H^{\frac{\alpha}2}_0(\R^n_+)$ solves
    \begin{eqnarray*}
        \lefteqn{  \widetilde{\mathcal{E}} (v,\varphi)+ \int_{\R^n_+} \phi'(v)\varphi \omega \, dx}\\
        &=& \mathcal{E} (\psi u,F^{\ast}(\varphi)) = \mathcal{E} (u,\psi F^{\ast}(\varphi)) + ([\LL,\psi]u,F^{\ast}(\varphi))_{L^2(\Omega)}\\
        &=& (g,\psi F^{\ast}(\varphi))_{L^2(\Omega)} + ([\LL,\psi]u,F^{\ast}(\varphi))_{L^2(\Omega)}\\
        &=& (\tilde{g},\varphi)_{L^2(\R^n_+)} + ([\LL,\psi]u,F^{\ast}(\varphi))_{L^2(\Omega)}
    \end{eqnarray*}
    where
    \begin{align}
        &\widetilde{\mathcal{E}} (\varphi,\psi) = \int_{\R^n_+}\int_{\R^n_+} (\varphi(x)-\varphi(y))(\psi(x)-\psi(y)) \tilde{k}(x,y,x-y)dxdy \label{eq:tildeE}\\
        &\tilde{k}(x,y,z) = k(F^{-1}(x),F^{-1}(y),A(x,y)z)\omega(x)\omega(y),\label{eq:tildek}
    \end{align}
    and
    \begin{align*}
        &A(x,y)= \int_0^1 DF^{-1}((1-s)y+sx)ds,\\
        &\tilde{g}(x)= g(F^{-1}(x))\omega(x),\qquad \omega(x)=\operatorname{det} DF^{-1}(x).
    \end{align*}
    Moreover, $\widetilde{\LL}$ denotes the integral operator associated to $\widetilde{\mathcal{E}}$. It is not difficult to prove that $\tilde{k}\in \mathcal{K}^\alpha(R')$ for some $R'=R'(R,F)$.
    Now all terms on the right-hand side of the equation above define a functional on $L^2(\R^n_+)$ (see~\cite[Lemma~3.6]{Abels2007}). Hence $v \in L^2(\R_+;H^1(\R^{n-1}))$ by the first arguments in the case $\R^n_+$. Choosing now another $\psi\in \Cont[\infty]_{0}(U_1)$ such that $\psi\equiv 1$ on an open neighborhood $U_2\Subset U_1$ of $x$, one obtains that $v:=  F^{\ast,-1} (\psi u)$ solves
    \begin{align*}
        &\widetilde{\mathcal{E}} (v,\varphi) = (\tilde{g},\varphi)_{L^2(\R^n_+))} - (\eta u, [\LL,\psi] F^{\ast}(\varphi))_{L^2(\Omega)} + ([\LL,\psi](1-\eta)u,F^{\ast}(\varphi))_{L^2(\Omega)}\\
        &= (\tilde{g},\varphi)_{L^2(\R^n_+)} - (\tilde{\eta} v, [\widetilde{\LL},\tilde{\psi}] \varphi)_{L^2(\R^n_+)} + ([\LL,\psi](1-\eta)u,F^{\ast}(\varphi))_{L^2(\Omega)}
    \end{align*}
    for all $\varphi\in \Cont[\infty]_{0}(\R^n_+)$, where $\eta \in \Cont[\infty]_{0}(U_1)$ with $\eta\equiv 1$ on $\supp \psi$, and $\tilde{\psi}= F^{\ast,-1}(\psi), \tilde{\eta}=F^{\ast,-1}(\eta)$. Let us replace $\varphi$ by $-h^{-1}\Delta_{j,h}^- \varphi$. We obtain
    \begin{eqnarray*}
        \lefteqn{  \widetilde{\mathcal{E}} (v,-h^{-1}\Delta_{j,h}^- \varphi)+ h^{-1} \int_\Omega \Delta_{j,h}^+(\phi'(u))\varphi \omega \, dx}\\
        &=& (h^{-1}\Delta_{j,h}^{+}\tilde{g} - \tau_h(\phi'(v))h^{-1}\Delta_{j,h}^{+}\omega,\varphi)_{L^2(\R^n_+)} - (h^{-1}\Delta_{j,h}^+(\tilde{\eta}v), [\widetilde{\LL},\tilde{\psi}] \varphi)_{L^2(\R^n_+)}\\
        && {} + (\tilde{\eta}v, h^{-1}[\Delta_{j,h}^-,[\widetilde{\LL},\tilde{\psi}]] \varphi)_{L^2(\R^n_+)} + (h^{-1}\Delta_{j,h}^+F^{\ast,-1}([\LL,\psi](1-\eta)u),\varphi)_{L^2(\R^n_+)}.
    \end{eqnarray*}
    Observe now that
    \begin{eqnarray*}
        \|h^{-1}\Delta_{j,h}^+\tilde{g}- \tau_h(\phi'(v))h^{-1}\Delta_{j,h}^+\omega\|_{L^2(\R^n_+)} &\leqslant& C\left(\|\partial_{x_j}\tilde{g}\|_{L^2(\R^n_+)} + \|\phi'(v)\|_{L^2(\R^n_+)}\right)\\
        &\leqslant&  C\left(\|g\|_{H^1(\Omega)} + \|\phi'(u)\|_{L^2(\Omega)}\right).
    \end{eqnarray*}
    On the other hand, since $\partial_{x_j}(\tilde{\eta}v), \partial_{x_j} \tilde{g}\in L^2(\R^n_+)$, then
    $h^{-1}\Delta_{j,h}^+(\tilde{\eta}v)$ and $h^{-1}\Delta_{j,h}^{+}\tilde{g}$ are bounded in $L^2(\R^n_+)$. Moreover, we have
    \begin{eqnarray*}
        ([{\LL},\psi](1-\eta)u)(x) &=& - \psi(x) (\LL (1-\eta)u)(x) \\
        &=& \int_\Omega \psi(x)(1-\eta(y))u(y) k(x,y,x-y)\, dy
    \end{eqnarray*}
    because of $\psi (1-\eta)\equiv 0$, where $\psi(1-\eta)k\in \Cont[1](\overline{\Omega}\times\overline{\Omega})$ since $\supp \psi \cap \supp (1-\eta)=\emptyset$ and $k(x,y,x-y)$ is continuously differentiable for $x\neq y$. Therefore $([{\LL},\psi](1-\eta)u)\in \Cont[1](\overline{\Omega})$ and
    \begin{equation*}
        h^{-1}\Delta_{j,h}^+F^{\ast,-1}([\LL,\psi](1-\eta)u)\in L^2(\R^n_+)
    \end{equation*}
    is uniformly bounded. Finally,
    \begin{eqnarray*}
        [h^{-1}\Delta_{j,h}^-,[\widetilde{\LL},\tilde{\psi}]] \varphi &=& [\widetilde{\LL},h^{-1}[\Delta_{j,h}^-,\tilde{\psi}]] \varphi + [\tilde{\psi},[\widetilde{\LL},h^{-1}\Delta_{j,h}^-]]\varphi\\
        &=& [\widetilde{\LL},h^{-1}(\Delta_{j,h}^-\tilde{\psi})\tau_{j,-h}]\varphi+[\tilde{\psi},\widetilde{\LL}_{j,h}^-]\varphi,
    \end{eqnarray*}
    where $\widetilde{\LL}_{j,h}^-$ is the integral operator with kernel $h^{-1}(\tilde{k}(x+he_j,y+he_j,z)-\tilde{k}(x,y,z))$ and by (\ref{k-ass-two}) the latter kernel is bounded by $C|z|^{-d-\alpha}$ uniformly in $h>0$. This implies that
    \begin{eqnarray*}
        \|[h^{-1}\Delta_{j,h}^-,[\widetilde{\LL},\tilde{\psi}]] \varphi\|_{L^2(\R^n_+)} &\leqslant & C\|\varphi\|_{H^{\alpha/2}(\Omega)}
    \end{eqnarray*}
    uniformly in $h>0$. Hence choosing $\varphi= h^{-1} \Delta_{j,h}^+ v$ one obtains arguing as in the half-space case that
    $\partial_{x_j} v\in H^{\frac{\alpha}2}(\R^n_+)$, $j=1,\ldots, n-1$. This entails H\"older continuity of $u$ in a neighborhood of $x$. Estimate \eqref{eq:HoelderEstim} thus follows from the
    estimates obtained in this proof.
\end{proof}

\section{Subgradients}\label{subgra}

Let $\phi\colon [a,b]\to \R$  be a continuous function and set $\phi(x)=+\infty$ for $x\not\in [a,b]$. Then fix $\theta\geqslant 0$
and consider the functional
\begin{equation}\label{eq:DefnF}
    F_{\theta} (c) = \frac{\theta}{2} \int_{\Omega} |\nabla c|^2 \sd x + \E(c,c)  + \int_{\Omega} \phi( c(x)) \sd x
\end{equation}
where
\begin{eqnarray*}
    \dom F_0&=& \left\{c\in H^{\alpha/2}(\Omega)\cap L^2_{(m)}(\Omega): \phi(c)\in L^1(\Omega)\right\},\\
    \dom F_\theta&=& H^1(\Omega)\cap \dom F_0\qquad \text{if}\ \theta >0.
\end{eqnarray*}
for a fixed $m\in(a,b)$.
Moreover, let
\begin{equation*}
    \E_\theta (u,v)= \theta \int_\Omega \nabla u\cdot \nabla v \sd x +\E(u,v)
\end{equation*}
for all $u,v\in H^1(\Omega)$ if $\theta>0$ and $u,v\in H^{\alpha/2}(\Omega)$ if $\theta = 0$.

We denote by $\partial F_\theta(c)\colon L^2_{(m)}(\Omega)\to \mathcal{P}(L^2_{(0)}(\Omega))$ the subgradient of $F_\theta$ at $c\in \dom F$ in the sense that  $w\in \partial F_\theta(c)$ if and only if
\begin{equation*}
    (w,c'-c)_{L^2} \leqslant F_\theta(c')-F_\theta(c)\qquad \forall\,c'\in L^2_{(m)}(\Omega).
\end{equation*}
Note that $L^2_{(m)}(\Omega)$ is an \emph{affine} subspace of $L^2(\Omega)$ with tangent space $L^2_{(0)}(\Omega)$. Therefore the standard definition of $\partial F$ for functionals on Hilbert spaces does not apply. But the definition above is the obvious generalization to affine subspaces of Hilbert spaces.

First of all let us prove the following
\begin{lem}\label{lem:LSC}
    Let $\phi\colon [a,b]\to \R$ be a continuous and convex function. Then, for any $\theta \geqslant 0$, $F_{\theta}$ defined as in~\eqref{eq:DefnF} is a proper, lower semicontinuous, convex functional.
\end{lem}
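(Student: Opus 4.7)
\begin{proof*}{(plan)}
I would verify the three properties separately. \textbf{Properness} is essentially free: the constant function $c \equiv m$ lies in $L^2_{(m)}(\Omega) \cap H^1(\Omega)$, satisfies $\nabla c \equiv 0$ and $\mathcal{E}(c,c)=0$ (the integrand vanishes identically), and contributes $|\Omega|\phi(m) < \infty$ since $m\in(a,b)$ and $\phi$ is continuous on $[a,b]$. Since $\phi$ is continuous on the compact interval $[a,b]$, it is bounded below there, so $F_\theta$ is bounded below by $|\Omega|\inf_{[a,b]}\phi > -\infty$ everywhere.

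\textbf{Convexity} follows term by term. The gradient term $\tfrac{\theta}{2}\int_\Omega |\nabla c|^2\,dx$ is a non-negative quadratic form, hence convex. For the nonlocal term, expand
\begin{equation*}
    \mathcal{E}(\lambda c_1 + (1-\lambda)c_2,\lambda c_1 + (1-\lambda) c_2)
\end{equation*}
pointwise under the double integral and use the convexity of $s \mapsto s^2$ together with $k \geqslant 0$ from~\eqref{k-ass-three}. For the potential term, convexity of $\phi$ on $[a,b]$ and the convention $\phi = +\infty$ outside $[a,b]$ make $\phi$ convex on $\mathbb{R}$, and integration preserves pointwise convexity.

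\textbf{Lower semicontinuity} on $L^2_{(m)}(\Omega)$ is the main point. Let $c_n \to c$ in $L^2$ with $\liminf_n F_\theta(c_n) =: \ell < \infty$ (otherwise there is nothing to prove). Pass to a subsequence realizing the liminf along which $c_n \to c$ almost everywhere. The key observation is that the extended $\phi:\mathbb{R}\to \mathbb{R}\cup\{+\infty\}$ is lower semicontinuous (continuous on $[a,b]$ and jumping up to $+\infty$ off $[a,b]$), so Fatou's lemma applied to $\phi(c_n) - \inf_{[a,b]}\phi \geqslant 0$ yields
\begin{equation*}
    \int_\Omega \phi(c(x))\,dx \leqslant \liminf_{n\to\infty}\int_\Omega \phi(c_n(x))\,dx.
\end{equation*}
The same pointwise Fatou argument, applied to the non-negative integrand $(c_n(x)-c_n(y))^2 k(x,y,x-y)$, gives $\mathcal{E}(c,c) \leqslant \liminf_n \mathcal{E}(c_n,c_n)$. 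For the gradient term (when $\theta > 0$), the boundedness of $\int|\nabla c_n|^2\,dx$ implies that (a further subsequence of) $c_n$ converges weakly in $H^1(\Omega)$ to $c$, and weak lower semicontinuity of the $L^2$-norm of $\nabla c_n$ closes this piece. Summing the three liminf inequalities yields $F_\theta(c) \leqslant \ell$, as required.

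The main (minor) subtlety is handling the $\phi$ term correctly when $c$ takes values at the endpoints $\{a,b\}$; this is why the lower semicontinuous extension of $\phi$ must be used, rather than continuity of $\phi$ on $[a,b]$ alone. Everything else reduces to Fatou's lemma plus weak lower semicontinuity of Hilbertian quadratic forms.
\end{proof*}
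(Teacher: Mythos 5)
Your proof is correct, and its overall skeleton (properness via the constant function $m$, termwise convexity, lower semicontinuity via a subsequence converging almost everywhere plus Fatou for the potential term) matches the paper's. The one genuine difference is the treatment of the nonlocal term. The paper first uses that finite energy bounds the $H^{\alpha/2}$-norm (the norm equivalence $\mathcal{E}(u,u)+|\mean{u}|^{2}\sim \|u\|_{H^{\alpha/2}}^{2}$) to extract a weakly convergent subsequence in $H^{\alpha/2}(\Omega)$, identifies the weak limit with $c$, and then invokes weak lower semicontinuity of the quadratic form $u\mapsto\mathcal{E}(u,u)$; you instead apply Fatou's lemma directly to the nonnegative integrand $(c_n(x)-c_n(y))^{2}k(x,y,x-y)$ on $\Omega\times\Omega$, using only the a.e.\ convergence already supplied by $L^{2}$ convergence. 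Your route is more elementary: it needs neither the norm equivalence nor any compact embedding, and it still recovers $c\in\dom F_{0}$ a posteriori, since $\mathcal{E}(c,c)<\infty$ together with \eqref{k-ass-three} and $c\in L^{2}(\Omega)$ gives $c\in H^{\alpha/2}(\Omega)$. For $\theta>0$ the paper merely cites an external lemma, whereas you spell out the standard weak lower semicontinuity argument for the Dirichlet term; the only points worth being careful about, which you do address, are normalizing $\phi$ to be bounded below before applying Fatou and noting that all three terms are bounded below so that the liminf inequalities may be summed.
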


\begin{proof}
    We only need to prove the lower semicontinuity. The case $\theta > 0$ may be handled as in~\cite[Lemma~4.1]{AsymptoticCH}. On the other hand, if $\theta = 0$, let $c_{k} \in \Lp_{(m)}(\Omega)$ be such that $\lim_{k \to \infty} c_{k} = c$ in $\Lp(\Omega)$ and   $\liminf_{k \to \infty}F_{\theta}(c_{k}) < \infty$. By adding a suitable constant to $\phi$, we can reduce to the case $\phi\geq 0$. Up to a subsequence, we can assume that $c_{k} \in \dom F_{\theta}$ and $c_{k} \toweak c^{\ast}$ in $\Hs[\sfrac{\alpha}{2}](\Omega)$. Hence $c_{k} \to c^{\ast}$ in $\Lp(\Omega)$ and almost everywhere in $\Omega$. Thus we get $c = c^{\ast}$. Moreover, Fatou's lemma and the (weak) continuity of $\mathcal{E}$ imply $c \in \dom F_{\theta}$ and $F_{\theta}(c) \leqslant \liminf_{k \to \infty} F_{\theta}(c_{k})$.
\end{proof}

\begin{cor}\label{cor:MaximalMonotone}
    Let $\phi$ and $F_{\theta}$ be as in Lemma~\ref{lem:LSC} and let $m=0$. Then, for any $\theta \geqslant 0$, $\partial F_{\theta}$ is a maximal monotone operator on $H = \Lp_{(0)}(\Omega)$.
\end{cor}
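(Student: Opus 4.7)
The plan is to observe that when $m=0$, the affine subspace $L^2_{(m)}(\Omega)$ coincides with the closed linear subspace $L^2_{(0)}(\Omega)$, which is itself a (separable) Hilbert space with the inner product inherited from $L^2(\Omega)$. Consequently, the ad-hoc definition of $\partial F_\theta$ introduced for functionals on affine subspaces reduces, in the case $m=0$, to the standard Br\'ezis subgradient of $F_\theta$ viewed as a proper functional on the Hilbert space $H=L^2_{(0)}(\Omega)$: indeed, every admissible test element $c'-c$ ranges over all of $L^2_{(0)}(\Omega)$, so the variational inequality in the definition of $\partial F_\theta$ is the one that characterizes $w\in\partial F_\theta(c)$ in the classical sense on $H$.

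Next, I would invoke Lemma~\ref{lem:LSC}, which states that $F_\theta$ is proper, convex and lower semicontinuous on $L^2_{(m)}(\Omega)=L^2_{(0)}(\Omega)$. Strictly speaking the cited lemma is formulated on $L^2_{(m)}(\Omega)$, but since $L^2_{(0)}(\Omega)$ is a closed subspace of $L^2(\Omega)$, lower semicontinuity with respect to the $L^2$-topology restricted to $L^2_{(0)}(\Omega)$ is exactly lower semicontinuity on the Hilbert space $H$. Properness follows because $\dom F_\theta$ is nonempty (any constant function equal to $m=0$ is admissible once $\phi(0)<\infty$, which holds since $a<0<b$ and $\phi$ is continuous on $[a,b]$).

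Having established that $F_\theta\colon H\to \R\cup\{+\infty\}$ is a proper, convex and lower semicontinuous functional on the Hilbert space $H=L^2_{(0)}(\Omega)$, the claim is then an immediate application of the standard result on maximal monotonicity of subgradients in Hilbert spaces; specifically, I would cite \cite[Exemple 2.3.4]{Brezis}, which was already quoted in Subsection~\ref{LipPer} and guarantees that $\partial F_\theta$ is maximal monotone.

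I do not foresee any real obstacle in this proof: the only point worth emphasizing — and it is the reason the assumption $m=0$ enters — is that for nonzero $m$ the set $L^2_{(m)}(\Omega)$ is merely an affine subspace, hence not a Hilbert space, and the direct Br\'ezis statement would not apply; translating by an element of $L^2_{(m)}(\Omega)$ would recover a similar conclusion, but it is cleaner and sufficient for the later use in Section~\ref{S:existence} to state the result only for $m=0$.
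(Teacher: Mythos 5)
Your proposal is correct and follows essentially the same route as the paper: the paper likewise reduces the claim to Lemma~\ref{lem:LSC} (properness, convexity and lower semicontinuity of $F_\theta$ on $H=L^2_{(0)}(\Omega)$) and then invokes the standard theorem that the subgradient of such a functional on a Hilbert space is maximal monotone, citing Corollary~1.2 and Lemma~1.3 in Chapter~IV of Showalter rather than Br\'ezis. Your additional remarks on why $m=0$ makes $L^2_{(m)}(\Omega)$ a genuine Hilbert space and on properness are consistent with the paper and unobjectionable.
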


\begin{proof}
    In view of Lemma~\ref{lem:LSC}, this fact follows from Corollary~1.2 and Lemma 1.3 in \cite[Chapter IV]{Showalter}.
\end{proof}

We now state our main result on the following characterization of $\partial F(c)$:
\begin{theorem}\label{thm:Regularity}
    Let $\phi\colon [a,b]\to \R$ be a convex function that is twice continuously differentiable in $(a,b)$ and satisfies $\lim_{x\to a} \phi'(x)=-\infty$, $\lim_{x\to b} \phi'(x) =+\infty $. Moreover, we set $\phi'(x)=+\infty$ for $x\not\in (a,b)$ and let $F_\theta$ be defined as in~\eqref{eq:DefnF}. Then
    \begin{eqnarray*}
        \mathcal{D}(\partial F_0) &=&\Big\{ c\in H^\alpha_{\loc}(\Omega)\cap H^{\alpha/2}(\Omega)\cap L^2_{(m)}(\Omega): \phi'(c)\in L^2(\Omega),\exists f\in L^2(\Omega):\\
        & & \quad  \E(c,\varphi) + \int_\Omega \phi'(c)\varphi \, dx= \int_{\Omega} f\varphi \sd x \quad \forall \, \varphi \in H^{\alpha/2}(\Omega)
        \Big\}
    \end{eqnarray*}
    if $\theta=0$ and
    \begin{eqnarray*}
        \mathcal{D}(\partial F_\theta) &=&\Big\{ c\in \Hs[2]_{\loc}(\Omega) \cap \Hs[1](\Omega) \cap L^2_{(m)}(\Omega): \phi'(c)\in L^2(\Omega),\exists f\in L^2(\Omega):\\
        & &\quad  \E_\theta(c,\varphi) + \int_\Omega \phi'(c)\varphi \, dx= \int_{\Omega} f\varphi \sd x \quad\, \forall\,\varphi \in \Hs(\Omega) \Big\}
    \end{eqnarray*}
    if $\theta > 0$ as well as
    \begin{equation*}
        \partial F_\theta (c) = -\theta  \Delta c + \LL c + P_0\phi'(c)\quad \text{in}\ \mathcal{D}'(\Omega) \qquad \text{for $\theta \geqslant 0$.}
    \end{equation*}
    Moreover, the following estimates hold
    \begin{alignat}{1}\label{eq:DomEstim}
          \theta \|c\|_{H^1}^2+ \|c\|_{H^{\alpha/2}}^2+ \|\phi'(c)\|_2^2&\leqslant  C\left(\|\partial F_\theta(c)\|_2^2 + \|c\|_2^2+1\right)\\\nonumber
        \int_\Omega\int_\Omega (\phi'(c(x))-\phi'(c(y)))&(c(x)-c(y))k(x,y,x-y)\sd x\sd y\\\nonumber
        &\leqslant  C\left(\|\partial F_\theta(c)\|_2^2 + \|c\|_2^2+1\right)\\\nonumber
        \theta \int_\Omega \phi''(c)|\nabla c|^2 \sd x &\leqslant C\left(\|\partial F_\theta(c)\|_2^2 + \|c\|_2^2+1\right)
    \end{alignat}
    for some constant $C>0$ independent of $c\in \mathcal{D}(\partial F_\theta) $ and $\theta \geqslant 0$.
\end{theorem}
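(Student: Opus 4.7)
The plan is to show that the operator $A_\theta(c) = -\theta\Delta c + \mathcal{L}c + P_0\phi'(c)$, defined on the set $\mathcal{D}_\theta$ described in the statement, coincides with $\partial F_\theta$. The easy inclusion $A_\theta \subseteq \partial F_\theta$ follows from convexity: if $c \in \mathcal{D}_\theta$ satisfies $\mathcal{E}_\theta(c,\varphi) + \int_\Omega \phi'(c)\varphi\, dx = \int_\Omega f\varphi\, dx$ for all test functions, then for any $c' \in L^2_{(m)}(\Omega) \cap \dom F_\theta$ one has $c'-c \in L^2_{(0)}(\Omega)$, so $\int f(c'-c) = \int P_0 f (c'-c)$. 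The convexity bound $\phi(c') \geq \phi(c) + \phi'(c)(c'-c)$ together with the algebraic identity $\mathcal{E}_\theta(c',c') - \mathcal{E}_\theta(c,c) = \mathcal{E}_\theta(c'-c,c'-c) + 2\mathcal{E}_\theta(c,c'-c) \geq 2\mathcal{E}_\theta(c,c'-c)$ then gives $F_\theta(c') - F_\theta(c) \geq (P_0 f, c'-c)_{L^2}$, which is the subgradient inequality.

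For the reverse inclusion I would exploit the fact that $\partial F_\theta$ is maximal monotone on $H = L^2_{(0)}(\Omega)$ by Corollary~\ref{cor:MaximalMonotone}, so it suffices to prove that $A_\theta$ is itself maximal monotone, namely $\operatorname{Range}(I + A_\theta) = L^2_{(0)}(\Omega)$, since any monotone extension of a maximal monotone operator must coincide with it. Fix $g \in L^2_{(0)}(\Omega)$ and approximate $\phi$ by its Moreau-Yosida regularization $\phi_\lambda$, which is convex, $C^1$ on all of $\mathbb{R}$, with globally Lipschitz derivative $\phi_\lambda'$ and with $\phi_\lambda \nearrow \phi$ pointwise. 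The regularized problem of finding $c_\lambda \in H^{\alpha/2}_{(m)}(\Omega)$ (or $H^1_{(m)}(\Omega)$ when $\theta>0$) with
\begin{equation*}
    \mathcal{E}_\theta(c_\lambda,\varphi) + \int_\Omega \phi_\lambda'(c_\lambda)\varphi\, dx + \int_\Omega c_\lambda \varphi\, dx = \int_\Omega (g + m + c_\lambda \text{ corrections})\varphi\, dx
\end{equation*}
for all admissible $\varphi$ is solvable by a direct monotone operator argument (Browder's theorem applied to the coercive strictly monotone form obtained by removing the mean-value constraint and then projecting), giving a unique $c_\lambda$.

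The crux of the argument — and the main obstacle — is obtaining a uniform $L^2$ bound on $\phi_\lambda'(c_\lambda)$, strong enough to pass to the limit and to recover the singular potential. Testing the regularized equation with $\phi_\lambda'(c_\lambda) - \overline{\phi_\lambda'(c_\lambda)}$ and using the key nonnegativity
\begin{equation*}
    \mathcal{E}(c_\lambda, \phi_\lambda'(c_\lambda)) = \frac{1}{2}\int_\Omega\int_\Omega (\phi_\lambda'(c_\lambda(x)) - \phi_\lambda'(c_\lambda(y)))(c_\lambda(x)-c_\lambda(y))\, k(x,y,x-y)\, dx\, dy \geq 0,
\end{equation*}
together with $\theta\int_\Omega \nabla c_\lambda \cdot \nabla \phi_\lambda'(c_\lambda) = \theta\int_\Omega \phi_\lambda''(c_\lambda)|\nabla c_\lambda|^2 \geq 0$, reduces the estimate to a bound on the mean $\overline{\phi_\lambda'(c_\lambda)}$. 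This mean is controlled by a classical device based on $m \in (a,b)$ and the one-sided behaviour $\phi'(s) \to \pm\infty$ as $s \to a^+, b^-$, which prevents $c_\lambda$ from crowding the endpoints (cf.\ the arguments used for local singular Cahn-Hilliard equations).

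Once the $L^2$ bound on $\phi_\lambda'(c_\lambda)$ is in hand, weak compactness in $H^{\alpha/2}_{(m)}$ (resp.\ $H^1_{(m)}$) combined with Rellich compactness gives almost-everywhere convergence $c_\lambda \to c$, hence $\phi'(c) \in L^2$ with $\phi_\lambda'(c_\lambda) \rightharpoonup \phi'(c)$ weakly in $L^2$; passing to the limit produces $c + A_\theta(c) = g$, establishing maximality. The local regularity $c \in H^\alpha_{\loc}(\Omega)$ (resp.\ $H^2_{\loc}(\Omega)$) then follows by invoking Lemma~\ref{L:regularity_regularization}, or its direct analogue for $\theta = 0$, applied to the equation $-\theta \Delta c + \mathcal{L} c = g - c - \phi'(c) \in L^2(\Omega)$. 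Finally, the three inequalities in~\eqref{eq:DomEstim} are quantitative versions of precisely the test-function identities used above: testing with $c-m$ yields the first, testing with $\phi'(c) - \overline{\phi'(c)}$ yields the double-integral bound, and the gradient identity $\theta\int\nabla c \cdot \nabla\phi'(c) = \theta\int \phi''(c)|\nabla c|^2$ yields the third.
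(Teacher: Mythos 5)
Your strategy is genuinely different from the paper's and is, in outline, a viable alternative. The paper never regularizes $\phi$: it works directly with an arbitrary $w\in\partial F_\theta(c)$ and extracts the identification and the estimates by differentiating $F_\theta$ along carefully constructed admissible directions --- the implicitly defined perturbation $\tilde c_t(x)=c(x)-tf_n^+(\tilde c_t(x))$ built from truncations $f_n^+$ of $\phi_+'$, corrected to mean zero by the bump-function operator $M_\varphi$, and later the cutoffs $\chi_M\psi$ --- obtaining first the bound \eqref{eq:EstimPhiPrim} on $\phi'(c)$ and then the distributional identity for $w$. You instead prove the easy inclusion by convexity (as the paper does in its final paragraph) and upgrade it to equality via Minty's theorem, solving the resolvent equation $c+A_\theta(c)=g$ through Moreau--Yosida regularization of $\phi$. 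This is the classical route for singular Cahn--Hilliard potentials, and it buys a cleaner logical structure: every delicate manipulation is performed on problems where $\phi_\lambda'$ is globally Lipschitz, so $\phi_\lambda'(c_\lambda)$ is a legitimate test function in $H^{\alpha/2}(\Omega)$ (resp.\ $H^1(\Omega)$) and the sign conditions $\mathcal{E}(c_\lambda,\phi_\lambda'(c_\lambda))\geqslant 0$ and $\int_\Omega\phi_\lambda''(c_\lambda)|\nabla c_\lambda|^2\geqslant 0$ are immediate; the control of the mean of $\phi_\lambda'(c_\lambda)$ via $m\in(a,b)$ is standard. (A cosmetic point: the quadratic part of $F_\theta$ in \eqref{eq:DefnF} is $\tfrac{\theta}{2}\|\nabla c\|_2^2+\mathcal{E}(c,c)$, not $\tfrac12\mathcal{E}_\theta(c,c)$, so your ``algebraic identity'' line needs adjusting --- although this normalization wrinkle between $E$, $\mathcal{E}$ and $\LL$ is already present in the paper itself.)

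The one place where your write-up has a real gap is the final sentence. For a general $c\in\mathcal{D}(\partial F_\theta)$ you only know $\phi'(c)\in L^2(\Omega)$; it is not known that $\phi'(c)\in H^{\alpha/2}(\Omega)$ (or $H^1(\Omega)$), so ``testing with $\phi'(c)-\overline{\phi'(c)}$'' is not an admissible operation on the limit problem, and the third estimate cannot come from ``the gradient identity'' because $\phi''(c)|\nabla c|^2$ is not yet known to be integrable. This is precisely the obstruction that forces the paper's elaborate approximation machinery, and your route does not avoid that work --- it relocates it. Concretely, \eqref{eq:DomEstim} must be derived uniformly in $\lambda$ for the resolvent approximations $c_\lambda$ associated with the datum $g=c+\partial F_\theta(c)$ and then transported to $c$ (which is the limit of the $c_\lambda$ by uniqueness of the resolvent): the $H^1$, $H^{\alpha/2}$ and $L^2$ bounds pass by weak lower semicontinuity; the double-integral bound passes by Fatou's lemma using the a.e.\ convergence $\phi_\lambda'(c_\lambda)\to\phi'(c)$; and the term $\theta\int_\Omega\phi_\lambda''(c_\lambda)|\nabla c_\lambda|^2=\theta\|\nabla\Phi_\lambda(c_\lambda)\|_2^2$, with $\Phi_\lambda'=(\phi_\lambda'')^{1/2}$, passes by weak lower semicontinuity after identifying the weak limit of $\nabla\Phi_\lambda(c_\lambda)$. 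Once this limit passage is written out, your argument closes.
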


\medskip\noindent
\begin{proof}

We will follow the same strategy as in~\cite[Theorem~4.3]{AsymptoticCH}. Let us introduce first some technical tools and simplifications. If we replace $c(x)$ by $\bar{c}(x) = c(x)-m$ and $\phi$ by $\bar{\phi}(c)=\phi(c+m)$, we can assume w.l.o.g. that $m=0\in (\bar{a},\bar{b})$. Moreover, replacing $\phi(c)$ by $\bar{\phi}(c)= \phi(c)+ b_1 c(x)+ b_2$, $b_j\in\R$ means changing $F$ only by an affine linear functional, for which the subgradient is trivial. In this way,  we may also assume that $\phi'(0)=\phi(0)=0$. Furthermore, we define $\phi_+(c)=\phi(c)$ if $c>0$, $\phi_+(c)=0$ if $c \leqslant 0$ and $\phi_-(c)=\phi(c)-\phi_+(c)$. Then $\phi_\pm\colon \R\to \R\cup \{+\infty\}$ are convex functions, which are continuously differentiable in $(a,b)$.

In the following, we would like to evaluate the directional derivative of $F_{\theta}(c)$ along $\phi'(c)$. Formally, this requires the estimate of $\|\phi'(c)\|_2$, but we cannot do this directly due to the singular behavior of $\phi$. Therefore we approximate $\phi'_+$ (and analogously $\phi'_-$) from below by a sequence $f_n^+$ of smooth potentials  as follows. Since $\phi'$ is continuous and monotone, $\phi'(0)=0$, and $\lim_{c\to b} \phi'(c) =+\infty $, for every $n\in\N$ sufficiently large there is some $c_n\in (\frac{b}2,b)$ such that $\phi'(c_n)=n$. Therefore we can define
\begin{equation*}
    f_n^+(c) =
    \begin{cases}
        \phi'(c) & \text{for}\ c\in [\frac{b}2,c_n)\\
        n+\phi''(c_n)(c-c_n) & \text{for}\ c \geqslant c_n\\
        0 & \text{for}\ c \leqslant 0
    \end{cases}
\end{equation*}
for $c\not\in (0,\frac{b}2)$. Moreover, we can extend $f_n^+$ to $\R$ such that $f_n^+\colon\R\to\R$ are $C^1$-functions with $0\leqslant f_n^+\leqslant \phi_+'$ and with first derivative bounded by $M_n:= \sup_{0\leqslant x\leqslant c_n} \phi''(x)$.

We have to work in the subspace $L^2_{(0)}(\Omega)$. Then we will use ``bump functions'' supported in suitable sets to correct the mean value of functions. For this let $c\in H^1_{(0)}(\Omega)$ be fixed and let $I\subset [a,b]$ be an interval such that $|\{c(x) \in I\}|>0$. We say that $\varphi$ is a \emph{bump function supported in $\{ c\in I \}$} if $\varphi\in H^1(\Omega)\cap L^\infty(\Omega)$, $\varphi\geqslant 0$, $\varphi(x)=0$ if $c(x)\not\in I$ and if $m(\varphi)=1$. Such a function can be constructed as follows. Choose a smooth function $\psi\colon \R\to [0,1]$ with bounded first derivative such that $\psi(s)=0$ if $s\not\in I$ and $\psi(s)>0$ otherwise. Then $\varphi= \frac{\psi(c)}{m(\psi(c))}$ has the stated properties. Furthermore, we note that, if $I=[a,a']$ with $a'\in (a,b)$, then we can choose $\psi$ such that $\psi'(s)\leqslant 0$. This implies that the constructed function $\varphi$ has the property
\begin{equation}\label{eq:SignDt}
    (\nabla c, \nabla \varphi)_{L^2(\Omega)} = \frac1{m(\psi(c))}\int_\Omega \psi'(c) |\nabla c|^2 \sd x \leqslant 0\\
\end{equation}
as well as
\begin{align}\label{eq:SignDt2}
    \mathcal{E}(c, \varphi) &= \frac{1}{2 \mean{\psi(c)}} \int_{\Omega} \int_{\Omega} (c(x) - c(y))(\psi(c(x)) - \psi(c(y))) k(x, y, x-y) \, \mathrm{d}x \mathrm{d}y \notag\\
    &= \frac{1}{2 \mean{\psi(c)}} \int_{\Omega} \int_{\Omega} \psi'(\xi) (c(x) - c(y))^{2} k(x, y, x-y) \, \mathrm{d}x \mathrm{d}y \leqslant 0
\end{align}
where $\xi(x,y)$ is a measurable function which is bounded above and below by $\max \{ c(x), c(y) \}$ and $\min \{ c(x), c(y) \}$ respectively. Given such a bump function $\varphi$, we define  $M_\varphi \colon L^2(\Omega)\to H^1(\Omega)\cap L^\infty(\Omega)$ by
\begin{equation*}
    (M_\varphi f)(x) = m(f)\varphi,\qquad f\in L^2(\Omega).
\end{equation*}
Then $f-M_\varphi f \in L^2_{(0)}(\Omega)$ and
\begin{equation}\label{eq:EstimPhiTilde}
    \|M_\varphi f\|_{H^1} \leqslant C \left|\int_\Omega f(x) \sd x\right|\qquad \forall\,\ f\in L^2(\Omega).
\end{equation}

Observe now that
\begin{equation*}
    \left|\left\{c(x)-a \geqslant t\right\}\right|\leqslant \frac1{t} \int_{\Omega} (c(x)-a)\sd x = \frac{|a||\Omega|}t
\end{equation*}
for $t>0$ since $c\in L^2_{(0)}(\Omega)$. This implies that $|\{c < \frac{b}2 \}| \geqslant \frac{b}{b+2|a|}|\Omega|>0$. Hence the interval $I=[a,\frac{b}2)$ is admissible for the construction of bump functions supported in $\{c\in (a,\frac{b}2)\}$.

After these preliminary considerations, let $c \in \mathcal{D}(\partial F_{\theta})$. We define $\tilde{c}_t(x)$, $0<t\leqslant \frac2{M_n}$, $x\in\Omega$, as solution of
    \begin{equation}
        \tilde{c}_t(x) = c(x) - t f_n^+(\tilde{c}_t(x)),
    \end{equation}
    which exists by the contraction mapping principle. Then $\tilde{c}_t(x)=c(x)$ if $c(x)< 0$ since $f_n^+(\tilde{c}_t(x))=0$ in this case. Moreover, we have that $0\leqslant \tilde{c}_t(x)= c(x)-tf_n^+(\tilde{c}_t(x))\leqslant c(x)$ if $c(x)\geqslant 0$. More formally, $\tilde{c}_t$ can be expressed in the form $\tilde{c}_t (x) = F_t^n(c(x))$, where $F_t^n\colon [a,b]\to [a,b]$ is a continuous differentiable mapping with $F_t^n(x) \to x$, $(F_t^n)'(x) \to 1$ as $t\to 0+$ uniformly in $[a,b]$. Hence, if $\theta > 0$, $\tilde{c}_t\in H^1(\Omega)$ and $\tilde{c}_t\to_{t\to 0} c$ in $H^1(\Omega)$ and almost everywhere. If else $\theta = 0$, we deduce $\tilde{c}_{t} \in \Hs[\sfrac{\alpha}{2}](\Omega)$ and $\tilde{c}_{t} \to_{t \to 0} c$ in $\Hs[\sfrac{\alpha}{2}](\Omega)$ and almost everywhere.

    Since, in general, $\tilde{c}_t(x)\not\in L^2_{(0)}(\Omega)$, we set  $c_t= \tilde{c}_t+ tM_\varphi (f_n^+(\tilde{c}_t))$, where $\varphi$ is a bump function supported in $\{c(x)<\frac{b}2\}$ satisfying~\eqref{eq:SignDt} and~\eqref{eq:SignDt2}. Then $c_t\in L^2_{(0)}(\Omega)$. Furthermore, $c_t(x)=\tilde{c}_t(x)$ and $f_n^+(c_t(x))=f_n^+(\tilde{c}_t(x))$ if $c(x)>\frac{b}2$ and $c_t(x)= \tilde{c}_t(x) + t M_\varphi (f_n^+(c_t))\in [a,\frac34b]$ if $c(x)\leqslant \frac{b}2$ and if $0<t<\frac{b}{4M_n'}$ where $M_n'= \sup_{0\leqslant t\leqslant b} f_n^+(t)\|\varphi\|_\infty$. We denote $d_t= M_\varphi(f_n^+(\tilde{c}_t))$.

    We now assume that $w\in \partial F_{\theta}(c)$. Thus we have
    \begin{equation*}
        F_{\theta}(c) - F_{\theta}(c_t) \leqslant t(w,f_n^+(\tilde{c}_t)- d_t)_{L^2(\Omega)}.
    \end{equation*}
    Moreover, if $t>0$ is sufficiently small, a direct computation involving the definition of $F$ and the above construction gives
    \begin{align*}
        &F_{\theta}(c) - F_{\theta}(c_t)\\
        =& \int_{\Omega}(\phi(c(x))-\phi(c_t(x))) \sd x + \theta t(\nabla c,\nabla f_n^+(\tilde{c}_t))_{L^2} - \theta tm(f_n^+(\tilde{c}_t))(\nabla c,  \nabla \varphi)_{L^2}\\
        & \quad {} - \theta \frac{t^2}2 \|\nabla (f_n^+(\tilde{c}_t)-d_t) \|_{L^2}^2 - \mathcal{E}(c-c_{t}, c-c_{t})\\
        & \quad {} + t \int_{\Omega} \int_{\Omega} (f_{n}^{+}(\tilde{c}_{t}(x)) - f_{n}^{+}(\tilde{c}_{t}(y)))(c(x) - c(y)) k(x, y, x-y) \, \mathrm{d}x \mathrm{d}y\\
        & \quad {} - t \mean{f_{n}^{+}(\tilde{c}_{t})} \int_{\Omega} \int_{\Omega} (\varphi(x) - \varphi(y))(c(x) - c(y)) k(x, y, x-y) \, \mathrm{d}x \mathrm{d}y \\
        \end{align*}
    Therefore, we deduce
    \begin{align*}
    &F_{\theta}(c) - F_{\theta}(c_t)\\
        \geqslant & t\int_{\{c(x) >\frac{b}2\}} \phi'(c_t(x)) f_n^+(c_t(x))\sd x+ t\int_{\{c(x) \leqslant \frac{a}2\}} \left(\phi(c(x))-\phi(c(x) + td_t)\right)\sd x\\
        & \quad {} + \int_{\{\frac{a}2\leqslant c(x) \leqslant \frac{b}2\}} \left(\phi(c(x))-\phi(\tilde{c}_t(x) + td_t)\right)\sd x + \theta t(\nabla c,\nabla f_n^+(\tilde{c_t})) \\
        & \quad {} + t \int_{\Omega} \int_{\Omega} (f_{n}^{+}(\tilde{c}_{t}(x)) - f_{n}^{+}(\tilde{c}_{t}(y))) (c(x) - c(y)) k(x, y, x-y) \, \mathrm{d}x \mathrm{d}y \\
        & \quad {} - \mathcal{E}(c-c_{t}, c-c_{t})- \theta \frac{t^2}2 \| \nabla (f_n^+(\tilde{c}_t)-d_t) \|_{L^2}^2.
    \end{align*}
    Hence
    \begin{align*}
    &F_{\theta}(c) - F_{\theta}(c_t)\\
        \geqslant & t\int_{\{c(x) >\frac{b}2\}} f_n^+(c_t(x))^2\sd x + \theta t(\nabla c,\nabla f_n^+(\tilde{c}_t)) - \theta \frac{t^2}2 \| \nabla (f_n^+(\tilde{c}_t)-d_t) \|_{L^2}^2\\
        & \quad {} + \int_{\{\frac{a}2\leqslant c(x) \leqslant \frac{b}2\}} \left(\phi(c(x))-\phi(c(x) + td_t)\right)\sd x - \mathcal{E}(c-c_{t}, c-c_{t})\\
        & \quad {} + t \int_{\Omega} \int_{\Omega} (f_{n}^{+}(\tilde{c}_{t}(x)) - f_{n}^{+}(\tilde{c}_{t}(y))) (c(x) - c(y)) k(x, y, x-y) \, \mathrm{d}x \mathrm{d}y,
    \end{align*}
    where we have used that $\phi(c)-\phi(c_t)\geqslant \phi'(c_t)(c-c_t)$ and $c_t<c$ if $c>\frac{b}2$, $\phi'(c_t)\geqslant f_n^+(c_t)$  as well as \eqref{eq:SignDt}, \eqref{eq:SignDt2} and $\phi(c)-\phi(c + td_t)\geqslant 0$ if $c\leqslant \frac{a}2$ and $t \leqslant \frac{a}{2M'_{n}}$. Hence we deduce
    \begin{align*}
        &(w,f_n^+(\tilde{c}_t)-d_t)_{L^2(\Omega)}\\
        \geqslant &\int_{\{c(x) > \frac{b}2\}} f_n^+ (c_t(x))^2\sd x + \theta (\nabla c,\nabla f_n^+(\tilde{c}_t)) - \theta \frac{t}{2} \| \nabla (f_n^+(c_t)-d_t) \|_{L^2}^2\\
        & \quad {} + \int_{\Omega} \int_{\Omega} (f_{n}^{+}(\tilde{c}_{t}(x)) - f_{n}^{+}(\tilde{c}_{t}(y)))(c(x) - c(y)) k(x, y, x-y) \, \mathrm{d}x \mathrm{d}y\\
        &\quad - \frac{t}{2} {\mathcal{E}(f_{n}^{+} (\tilde{c}_{t}) - d_{t}, f_{n}^{+} (\tilde{c}_{t}) - d_{t})} \\
        &\quad + \int_{\{\frac{a}{2} \leqslant c(x) \leqslant \frac{b}{2}\}} \frac{1}{t} \left( \phi(c(x))-\phi(\tilde{c}_{t}(x) + t d_{t}) \right) \sd x,
    \end{align*}
    which yields for $t \to 0$
    \begin{align*}
        & (w,f_n^+(c)-M_\varphi(f_n^+(c)))_{L^2(\Omega)}\\
        \geqslant & \int_{\{c(x) \geqslant \frac{b}2\}} f_n^+(c(x))^2\sd x \\
        & {}+ \int_{\{\frac{a}{2} \leqslant c(x) \leqslant \frac{b}{2}\}} \phi'(c(x))(f_{n}^{+}(c(x))-M_\varphi(f_{n}^{+}(c)))\sd x\\
        & {}+ \theta \Ltwoprod{\nabla c}{\nabla f_n^+(c)} \\
        & {}+ \int_{\Omega} \int_{\Omega} (f_{n}^{+}(c(x)) - f_{n}^{+}(c(y)))(c(x) - c(y)) k(x, y, x-y) \, \mathrm{d}x \mathrm{d}y
    \end{align*}
    since $\lim_{t\to 0}\tilde{c}_t = c$ in $H^1(\Omega)$ for $\theta > 0$ (in $\Hs[\sfrac{\alpha}{2}](\Omega)$ for $\theta = 0$) and almost everywhere and since $\phi(c)$ is continuously differentiable in $[\frac{a}2,\frac34 b]$. Observe now that
    \begin{gather*}
        \theta \Ltwoprod{\nabla c}{\nabla f_n^+(c)} = \theta \int_{\Omega}(f^+_n)'(c(x))|\nabla c(x)|^2\sd x \geqslant 0,\\
        \begin{alignat*}{1}
            &\int_{\Omega} \int_{\Omega} (f_{n}^{+}(c(x)) - f_{n}^{+}(c(y)))(c(x) - c(y)) k(x, y, x-y)) \, \mathrm{d}x \mathrm{d}y\\
            =& \int_{\Omega} \int_{\Omega} (f_{n}^{+})'(\xi) (c(x) - c(y))^{2} k(x, y, x-y) \, \mathrm{d}x \mathrm{d}y \geqslant 0,
        \end{alignat*}
    \end{gather*}
    where $\xi(x,y)$ is a measurable function which is bounded above and below by $\max \{ c(x), c(y) \}$ and $\min \{ c(x), c(y) \}$, respectively, and use the fact that $\|M_\varphi(f_n^+(c))\|_2\leqslant C \|f_n^+(c)\|_2$ on account of~\eqref{eq:EstimPhiTilde}. Therefore, we get
    \begin{align*}
        &\|f_n^+(c)\|_{L^2(\Omega)}^2 + \theta \int_{\Omega}(f^+_n)'(c(x))|\nabla c(x)|^2\sd x\\
        &\quad {} + \int_{\Omega} \int_{\Omega} (f_{n}^{+}(c(x)) - f_{n}^{+}(c(y)))(c(x) - c(y)) k(x, y, x-y) \, \mathrm{d}x \mathrm{d}y \\
        \leqslant & C\left(\|w\|_{L^2(\Omega)}^2 + \int_{\{\frac{a}2 \leqslant c(x) \leqslant \frac{b}2\}} |\phi'(c(x))|^2\sd x + 1 \right)\\
        \leqslant & C'\left(\|w\|_{L^2(\Omega)}^2 + \int_{\Omega} |c(x)|^2\sd x + 1 \right)
    \end{align*}
    by Young's inequality and letting $n\to \infty$  we infer
    \begin{multline}\label{eq:EstimPhiPrim}
        \|\phi'_+(c)\|_{L^2(\Omega)}^2 + \theta \int_{\Omega}\phi_+''(c(x))|\nabla c(x)|^2\sd x\\
        {}+ \int_{\Omega} \int_{\Omega} (\phi_{+}'(c(x)) - \phi_{+}'(c(y))) (c(x) - c(y)) k(x, y, x-y) \, \mathrm{d}x \mathrm{d}y\\
        \leqslant C\left(\|w\|_{L^2(\Omega)}^2 + \Lpnorm{c}^2 + 1 \right)
    \end{multline}
    by Fatou's lemma. By symmetry the same is true for $\phi_-$ instead of $\phi_+$ and therefore also for $\phi$.

    In particular, $\phi'(c)\in L^2(\Omega)$ implies $c(x)\in (a,b)$ almost everywhere in $\Omega$. Thus $|\{c(x)\in (a+\delta,b-\delta)\}|>0$ for sufficiently small $\delta>0$. Because of this, we can use a bump function $\varphi$ supported in $\{c(x) \in (a+\delta,b-\delta)\}$ for some fixed $\delta>0$. Moreover, let $\psi_M\colon \R \to [0,1]$, $M\in\N$, be smooth functions such that $\psi_M(s)=0$ if $|s|\geqslant M+1$, $\psi_M(s)=1$ if $|s|\leqslant M$, and $|\psi'_M(s)| \leqslant 2$. Set $\chi_M(x)= \psi_M(\phi'(c(x)))$. Then $\chi_M\in H^1(\Omega)$ and $\chi_M(x)=0$ if $\phi'(c(x))\geqslant M+1$. Moreover, $\chi_M \to_{M\to \infty} 1 $ almost everywhere and in $L^p(\Omega)$, $1\leqslant p<\infty$. On the other hand, we have
    \begin{align*}
        &(\nabla c, \nabla (\chi_M\psi))_{L^2(\Omega)} = (\nabla c, \chi_M \nabla \psi)_{L^2(\Omega)} \\
        &+ \int_{\Omega} \phi''(c(x))|\nabla c(x)|^2 \psi(x) \psi_M'(\phi'(c(x)) \sd x
    \end{align*}
    for all $\psi\in C^\infty(\ol{\Omega})$ if $\theta > 0$. Since $\phi''(c)|\nabla c|^2\in L^1(\Omega)$ due to~\eqref{eq:EstimPhiPrim} and $\psi_M'(\phi'(c(x)))\to_{M\to \infty} 0$ almost everywhere, we conclude
    \begin{equation}\label{eq:limMInfty}
        \lim_{M\to \infty}(\nabla c, \nabla (\chi_M\psi))_{L^2(\Omega)} = (\nabla c,\nabla \psi)_{L^2(\Omega)}\qquad \forall\, \psi\in C^\infty(\ol{\Omega})
    \end{equation}
    as soon as $\theta > 0$. Analogously, for all $\psi \in \Cont[\infty](\closure{\Omega})$, we also have
    \begin{align*}
        &\mathcal{E}(c, \chi_{M} \psi)\\
        =& \frac{1}{2} \int_{\Omega} \int_{\Omega} (c(x) - c(y))(\chi_{M}(x)\psi(x) - \chi_{M}(y)\psi(y)) k(x, y, x-y) \, \mathrm{d}x \mathrm{d}y\\
        =& \frac{1}{2} \int_{\Omega} \int_{\Omega} (c(x) - c(y)) \chi_{M}(x) (\psi(x) - \psi(y)) k(x, y, x-y) \, \mathrm{d}x \mathrm{d}y\\
        & + \frac{1}{2} \int_{\Omega} \int_{\Omega} (c(x) - c(y))(\chi_{M}(x) - \chi_{M}(y)) \psi(y) k(x, y, x-y) \, \mathrm{d}x \mathrm{d}y.
    \end{align*}
    Recalling that $\phi'$ is monotone and $|\psi_M'(s)|\leq 2$, for any positive and bounded $\psi$ we have
    \begin{multline*}
        \left|(c(x) - c(y))(\chi_{M}(x) - \chi_{M}(y)) \psi(y) k(x, y, x-y)\right| \\
        \leqslant 2 (c(x) - c(y))(\phi'(c(x)) - \phi'(c(y))) \psi(y)  k(x, y, x-y)
    \end{multline*}
    and therefore we deduce
    \begin{equation*}
        (c(x) - c(y)) (\chi_{M}(x) - \chi_{M}(y)) \psi(y) k(x, y, x-y) \in \Lp[1](\Omega \times \Omega).
    \end{equation*}
    Moreover, $\chi_{M}(x) - \chi_{M}(y) \to 0$ almost everywhere in $\Omega \times \Omega$ and $\chi_{M}(x) \to 1$ almost everywhere in $\Omega$, so that by the dominated convergence theorem we obtain, for all $\theta \geqslant 0$,
    \begin{equation}\label{eq:limMInfty2}
        \lim_{M \to \infty} \mathcal{E}(c, \chi_{M} \psi) = \mathcal{E}(c, \psi) \qquad \forall\, \psi \in \Cont[\infty](\closure{\Omega}).
    \end{equation}

   We now set $c_t^M=c - t\chi_M \psi+ t M_\varphi(\chi_M\psi)$, $\psi\in \Cont[\infty](\ol{\Omega})$, $t>0$, $M\in\N$. Then $c_t^M\in \dom F_{\theta}$ for sufficiently small $t>0$ (depending on $M$)  and
   \begin{align*}
        & t \Ltwoprod{w}{\chi_M\psi- M_\varphi (\chi_M\psi)}\\
        \geqslant & F_{\theta}(c) - F_{\theta}(c_t^M)\\
        = & \int_{\{\phi'(c(x))\leqslant M+1\}} \left(\phi(c(x))-\phi(c_t^M(x))\right) \sd x + \theta t (\nabla c,\nabla (\chi_M\psi-M_\varphi(\chi_M\psi)))_{L^2}\\
        &- \frac{\theta t^2}{2} \Lpnorm{ \nabla (\chi_M \psi - M_\varphi(\chi_M \psi)) }^2 \\
        &- t^{2} \mathcal{E}(\chi_{M} \psi - M_{\varphi}(\chi_{M} \psi), \chi_{M} \psi - M_{\varphi}(\chi_{M} \psi))\\
        &+ t \int_{\Omega} \int_{\Omega} (\chi_{M}(x)\psi(x) - \chi_{M}(y) \psi(y)) (c(x) - c(y)) k(x, y, x-y) \, \mathrm{d}x \mathrm{d}y\\
        &- t \, \mean{\chi_{M}\psi} \int_{\Omega} \int_{\Omega} (\varphi(x) - \varphi(y))(c(x) - c(y)) k(x, y, x-y) \, \mathrm{d}x \mathrm{d}y.
    \end{align*}
    Dividing by $t$ and passing to the limit $t \to 0$, we conclude
    \begin{align*}
        &\Ltwoprod{w}{\chi_M\psi- M_\varphi (\chi_M\psi)}\\
        \geqslant & \int_\Omega \phi'(c(x))(\chi_M \psi-M_\varphi (\chi_M\psi)) \sd x + \theta (\nabla c,\nabla (\chi_M\psi-M_\varphi (\chi_M\psi)))_{L^2}\\
        & \quad {} + \int_{\Omega} \int_{\Omega} (\chi_{M}(x) \psi(x) - \chi_{M}(y) \psi(y)) (c(x) - c(y)) k(x, y, x-y) \, \mathrm{d}x \mathrm{d}y\\
        & \quad {} - \mean{\chi_{M} \psi} \int_{\Omega} \int_{\Omega} (\varphi(x) - \varphi(y))(c(x) - c(y)) k(x, y, x-y) \, \mathrm{d}x \mathrm{d}y
    \end{align*}
    for all $\psi\in C^\infty(\ol{\Omega})$. Replacing $\psi$ by $-\psi$, we obtain equality in the above inequality. Finally, letting $M \to \infty$, we get
    \begin{equation*}
        (w,\psi)_{L^2(\Omega)} = (\phi'(c),\psi)_{L^2(\Omega)} + \theta (\nabla c,\nabla \psi)_{L^2(\Omega)} + \mathcal{E}(c, \psi)
    \end{equation*}
    for all $\psi \in \Cont[\infty](\closure{\Omega})$, $\mean{\psi} = 0$, where we have used~~\eqref{eq:EstimPhiTilde},~\eqref{eq:limMInfty},~\eqref{eq:limMInfty2}, and
    \begin{equation*}
        \lim_{M\to \infty}\int_{\Omega} \chi_M \psi \sd x = \lim_{M\to \infty}\int_{\Omega} (\chi_M-1) \psi \sd x = 0\qquad \text{if}\  m(\psi) =0.
    \end{equation*}

    Hence $-\theta \lapl_{N} c + \mathcal{L}c = w - P_{0} \phi'(c)\in L^2_{(0)}(\Omega)$. Using Lemma~\ref{L:regularity_regularization} we deduce $\theta^{\sfrac{1}{2}} c \in \Hs[2]_{\loc}(\Omega)$, $c \in \Hs[\alpha]_{\loc}(\Omega) \cap \Hs[\sfrac{\alpha}{2}](\Omega)$ and
    \begin{equation*}
 \theta^{\frac12}\norm{H^1}{c}+ \norm{\Hs[\sfrac{\alpha}{2}]}{c} \leqslant \Const \Lpnorm{\phi'} + \Const \Lpnorm{w}.
   \end{equation*}
 Using this and~\eqref{eq:EstimPhiPrim}, we obtain~\eqref{eq:DomEstim}. Moreover, the previous observations imply that $\partial F_{\theta}(c)= -\theta \lapl c + \mathcal{L}c + P_{0} \phi'(c)$ is single-valued and $\mathcal{D}(\partial F_\theta)$ is contained in the set on the right-hand side of the identities for $\mathcal{D}(\partial F_\theta)$ (see statement of this theorem).

    Conversely, recalling the definition of subdifferential, the properties of coercive bilinear forms $\Ltwoprod{\nabla u}{\nabla v}$ and $\mathcal{E}(u, v)$ as well as the convexity of $\phi$, it can be easily checked that $-\theta \lapl c + \mathcal{L}c + P_{0} \phi'(c)\in \partial F_\theta(c)$ for any $c$ in the set on the right-hand side of the identities for $\mathcal{D}(\partial F_\theta)$.  This finishes the proof.
\end{proof}

\begin{cor}\label{cor:CharacterisationDom}
    Let $\theta > 0$ and let $F_{\theta}$ be defined as above. Extend $F_{\theta}$ to a functional $\widetilde{F_{\theta}} \colon \Hzero(\Omega)\to \R\cup \{+\infty\}$ by setting $\widetilde{F_{\theta}}(c) = F_{\theta}(c)$ if $c \in \dom F_{\theta}$ and $\widetilde{F_{\theta}}(c) = +\infty$ else. Then $\widetilde{F_{\theta}}$ is a proper, convex, and lower semicontinuous functional, $\partial \widetilde{F_{\theta}}$ is a maximal monotone operator with $\partial \widetilde{F_{\theta}} (c) = -\lapl_{N} \partial F_{\theta}(c)$ and
    \begin{equation}\label{eq:CharacterisationDom}
        \mathcal{D}(\partial \widetilde{F_{\theta}}) = \{c \in \mathcal{D}(\partial F_{\theta}) \mid \partial F_{\theta}(c) = -\theta \lapl c + \mathcal{L}c + P_{0} \phi'(c)  \in \Hone(\Omega) \}.
    \end{equation}
\end{cor}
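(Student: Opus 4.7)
The plan is to verify three assertions in sequence: (i) $\widetilde{F_\theta}$ is proper, convex, and lower semicontinuous on $\Hzero(\Omega)$; (ii) $\partial \widetilde{F_\theta}$ is therefore maximal monotone; (iii) the identification $\partial \widetilde{F_\theta}(c) = -\Delta_N \partial F_\theta(c)$ together with the domain formula~\eqref{eq:CharacterisationDom}.

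Properness and convexity are immediate from $F_\theta$, since $\dom F_\theta$ is a convex subset of $\Hzero(\Omega)$ on which $\widetilde{F_\theta} = F_\theta$, while $\widetilde{F_\theta} = +\infty$ outside. For lower semicontinuity on $\Hzero(\Omega)$, I take $c_k \to c$ in $\Hzero(\Omega)$ with $\liminf_k \widetilde{F_\theta}(c_k) < \infty$. Because $\theta > 0$, along a subsequence the Dirichlet term in $F_\theta$ furnishes a uniform $H^1(\Omega)$-bound on $c_k$; Rellich's theorem then yields $L^2$-convergence along a further subsequence whose limit must coincide with $c$. Lemma~\ref{lem:LSC} applied in $L^2_{(m)}(\Omega)$ then gives $\widetilde{F_\theta}(c) \leqslant \liminf_k \widetilde{F_\theta}(c_k)$. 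Assertion (ii) follows from \cite[Chapter~IV, Corollary~1.2 and Lemma~1.3]{Showalter} in the Hilbert space $\Hzero(\Omega)$, exactly as in the proof of Corollary~\ref{cor:MaximalMonotone}.

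The core of (iii) is the dual pairing formula
\begin{equation*}
    (w, v)_{\Hzero} = (u, v)_{L^2} \qquad \text{where } u := -\Delta_N^{-1} w \in \Hone(\Omega),
\end{equation*}
valid for every $w \in \Hzero(\Omega)$ and $v \in L^2_{(0)}(\Omega)$. This follows by writing the $\Hzero$-inner product via its definition $(w, v)_{\Hzero} = (\nabla \Delta_N^{-1} w, \nabla \Delta_N^{-1} v)_{L^2}$ and integrating by parts, using that $\Delta_N^{-1} v$ belongs to $H^2(\Omega) \cap \Hone(\Omega)$ with $\partial_\nu \Delta_N^{-1} v = 0$ on $\partial\Omega$ for $v \in L^2_{(0)}(\Omega)$ by elliptic regularity on the $C^2$-domain $\Omega$. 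Given $c \in \dom F_\theta$ and $w \in \Hzero(\Omega)$, the subgradient condition $\widetilde{F_\theta}(\xi) \geqslant \widetilde{F_\theta}(c) + (w, \xi - c)_{\Hzero}$ is nontrivial only for $\xi \in \dom F_\theta \subset L^2_{(m)}(\Omega)$, for which $\xi - c \in L^2_{(0)}(\Omega)$; the displayed identity then rewrites this as $F_\theta(\xi) \geqslant F_\theta(c) + (u, \xi - c)_{L^2}$, that is, $u \in \partial F_\theta(c)$ in the sense of Section~\ref{subgra}. Since Theorem~\ref{thm:Regularity} guarantees that $\partial F_\theta$ is single-valued, one concludes that $w \in \partial \widetilde{F_\theta}(c)$ if and only if $c \in \mathcal{D}(\partial F_\theta)$ and $\partial F_\theta(c) \in \Hone(\Omega)$, in which case $w = -\Delta_N \partial F_\theta(c)$; this yields both~\eqref{eq:CharacterisationDom} and the formula for $\partial \widetilde{F_\theta}$.

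The principal technical point is the above pairing formula: a generic $w \in \Hzero(\Omega)$ need not lie in $L^2(\Omega)$, so the $L^2$-inner product on the right-hand side is meaningful precisely as the pairing between $u \in \Hone(\Omega)$ and $v \in L^2_{(0)}(\Omega)$, and one must check that the integration by parts is justified and that no boundary contributions survive. Once this is in place, the remainder is a direct transcription of the variational inequality between the two Hilbert-space settings.
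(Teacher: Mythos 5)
Your proposal is correct and follows essentially the same route as the paper: lower semicontinuity as in Lemma~\ref{lem:LSC}, maximal monotonicity via Showalter, and the identification of $\partial\widetilde{F_{\theta}}$ through the pairing $(w,v)_{\Hzero}=(-\Delta_N^{-1}w,v)_{L^2}$, which is exactly the paper's computation with $\mu_0=-\Delta_N^{-1}w$. The only cosmetic difference is that you justify this pairing by elliptic $H^2$-regularity and integration by parts, whereas it already follows directly from the variational definition of the Riesz isomorphism $\mathcal{R}=-\Delta_N$.
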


\begin{proof}
    The lower semicontinuity is proved in the same way as in Lemma~\ref{lem:LSC}.  Then the fact that $\partial \widetilde{F_{\theta}}$ is a maximal monotone operator follows from Corollary~1.2 and Lemma~1.3 in \cite[Chapter IV]{Showalter}.

    First, let $c\in\mathcal{D}(\partial \widetilde{F_{\theta}})$ and $w\in \partial \widetilde{F_{\theta}}(c)$, i.e.,
    \begin{equation}\label{eq:Subgrad}
        (w,c'-c)_{\Hzero} \leqslant \widetilde{F_{\theta}}(c') - \widetilde{F_{\theta}}(c) \qquad \text{for all}\ c'\in \Hzero(\Omega).
    \end{equation}
    Then let $\mu_0 = -\Delta_N^{-1} w$ and choose $c'\in L^2(\Omega)$. Thus we have
    \begin{align*}
        &(\mu_0,c'-c)_{L^2}\\
        = & -(\nabla \mu_0, \nabla \Delta_N^{-1}(c'-c))_{L^2} = (\nabla \Delta_N^{-1} w,\nabla \Delta_N^{-1}(c'-c))_{L^2}\\
        = & (w,c'-c)_{\Hzero} \leqslant \widetilde{F_{\theta}}(c') - \widetilde{F_{\theta}}(c) = F_{\theta}(c') - F_{\theta}(c)
    \end{align*}
    for all $c'\in L^2(\Omega)$. Hence $\mu_0 = -\theta \lapl c + \mathcal{L}c + P_0\phi'(c)\in \mathcal{D}(\partial F_{\theta})$. On the other hand, $\mu_0 = - \lapl_{N}^{-1} w \in \Hone (\Omega)$. This implies that $\partial \widetilde{F_{\theta}}(c)= -\lapl_{N} \partial F_{\theta}(c)$ and
    \begin{equation*}
        \mathcal{D}(\partial \widetilde{F_{\theta}}) \subseteq \left\{ c \in \mathcal{D}(\partial F_{\theta}) \mid \mu_0= -\theta \lapl c + \mathcal{L}c + P_{0} \phi'(c) \in \Hone(\Omega) \right\}.
    \end{equation*}
    Conversely, let $c \in \mathcal{D}(\partial F_{\theta})$ such that $\mu_{0} = -\theta \lapl c + \mathcal{L} c + P_{0} \phi'(c) = \partial F_{\theta}(c)  \in \Hone(\Omega) $. Then one easily verifies that $w= -\Delta_N\mu_0$ satisfies~\eqref{eq:Subgrad} arguing as above. Hence $c\in \mathcal{D}(\partial F_{\theta})$ and~\eqref{eq:CharacterisationDom} follows.
\end{proof}

\section{Proof of Theorem~\ref{thm:Existence}}\label{S:existence}

We first prove the existence of a weak solution. Let us consider the regularized (formal) problem
\begin{equation} \label{eq:CHr}
    \begin{cases}
        \partial_t c_{\theta} = \lapl \mu_{\theta}                                          \quad   & \text{in $\Omega\times (0,\infty)$}\\
        \mu_{\theta} = -\theta \lapl c_{\theta} + \mathcal{L} c_{\theta} + f'(c_{\theta})   \quad   & \text{in $\Omega\times (0,\infty)$}\\
        \partial_\nu \mu_{\theta} = \partial_\nu c_{\theta} =0                                                         \quad   & \text{on $\partial\Omega\times (0,\infty)$}\\
        c_{\theta}|_{t=0} = c_{0 \theta}                                                           \quad   & \text{in $\Omega$}
    \end{cases}
\end{equation}
 where $\theta > 0$ is a (small) positive real number.

Without loss of generality we suppose
\begin{equation}\label{eq:MeanZero}
    \mean{c_{0 \theta}} = \frac1{|\Omega|}\int_\Omega c_{0 \theta} \sd x =0.
\end{equation}
As in the previous section we can reduce to this case by a simple shift. Since~\eqref{eq:MeanZero} and the definition of $\mathcal{L}$ imply that any solution of~\eqref{eq:CHr} as in Theorem~\ref{thm:Existence} satisfies
\begin{equation*}
    \timeder \int_{\Omega} c_{\theta}(x,t) \sd x = \int_{\Omega} \lapl \mu_{\theta} \sd x = 0,
\end{equation*}
we conclude $\mean{c_{\theta}(t)} = 0$ for almost all $t>0$.

Problem~\eqref{eq:CHr} can be formulated as follows (see~\cite[Theorem~1.2]{AsymptoticCH})
\begin{align}
    \dt{c_{\theta}} + \mathcal{A}_{\theta}(c_{\theta}) + \mathcal{B} c_{\theta} &= 0, \qquad t>0, \label{eq:Abstract1}\\
    c_{\theta}|_{t=0} &= c_{0 \theta} \label{eq:Abstract2}
\end{align}
where
\begin{align*}
    \weight{\mathcal{A}_{\theta}(c_{\theta}), \varphi}_{\Hzero,\Hone} &= (\nabla \mu_{\theta}, \nabla \varphi)_{L^2} \qquad\text{with}\; \mu_{\theta} = -\theta \lapl c_{\theta} + \mathcal{L}c_{\theta} + \phi'(c) \\
    \weight{\mathcal{B} c_{\theta}, \varphi}_{\Hzero,\Hone} &= d(\nabla c_{\theta},\nabla \varphi)_{L^2}
\end{align*}
for all $\varphi\in \Hone(\Omega)$ and
\begin{align*}
    \mathcal{D}(\mathcal{A}_{\theta}) &= \left\{ c \in \Hs[2]_{\loc}(\Omega) \cap \Hs(\Omega) \mid c(x) \in [a,b]\; \forall\, x\in \Omega, \phi'(c)\in \Lp(\Omega), \right.\\
    & \quad \quad  \int_{\Omega} \int_{\Omega} (\phi'(c(x)) - \phi'(c(y)))(c(x) - c(y)) k(x, y, x-y) \, \mathrm{d}x \mathrm{d}y < \infty,\\
    & \quad \quad \left. \phi''(c)|\nabla c|^2 \in \Lp[1](\Omega),\; \partial F_{\theta}(c) \in H^1(\Omega),
    \; \partial_\nu c|_{\partial\Omega} =0
\right\}\\
  \mathcal{D}(\B) &= \Hone(\Omega) \subset \Hzero(\Omega).
\end{align*}
In other words
\begin{equation*}
    \mathcal{A}_{\theta}(c) = - \lapl_{N}(-\theta \lapl c - \mathcal{L}c + P_{0} \phi'(c)), \qquad \mathcal{B} c = -d \lapl_{N} c,
\end{equation*}
where $\lapl_{N} \colon \Hone(\Omega)\subset \Hzero(\Omega)\to \Hzero(\Omega)$ is the Laplace operator with Neumann boundary conditions as above, which is considered as an unbounded operator on $\Hzero(\Omega)$. Moreover, the initial datum~$c_{0 \theta}$ appearing in~\eqref{eq:CHr} is a regularization of the given original datum for  problem~\eqref{eq:CH1}-\eqref{eq:CH4}. In particular, we need $c_{0 \theta}$ to satisfy
\begin{equation*}
    \mean{c_{0 \theta}} = \mean{c_{0}}, \quad \lim_{\theta \to 0} \theta \norm{\Hs}{c_{0 \theta}} = 0 \quad \text{and} \quad \lim_{\theta \to 0} c_{0 \theta} = c_{0} \text{ in $\Hs[\sfrac{\alpha}{2}](\Omega)$.}
\end{equation*}
This can be obtained by considering $c_{0 \theta} = \Psi_{\epsilon(\theta)} \ast c_{0}$ where $\Psi_{\epsilon}$ is a suitable mollifier (e.g., a gaussian kernel) and $\epsilon(\theta)$ is chosen to be sufficiently slowly convergent to $0$ if $\theta \to 0$. Finally, we also introduce a suitable regularized energy for system~\eqref{eq:CHr}, namely,
\begin{equation*}
    E_{\theta}(c) \eqdef \mathcal{E}_{\theta}(c, c) + \int_{\Omega} f(c(x)) \, \mathrm{d}x.
\end{equation*}

In order to apply Theorem~\ref{thm:MonotoneLipschitz} for $\theta$ strictly positive we recall that, on account of Corollary~\ref{cor:CharacterisationDom}, $\mathcal{A} = \partial \widetilde{F_{\theta}}$ is a maximal monotone operator with $\widetilde{F_{\theta}}= \varphi_1 + \varphi_2$,
\begin{align*}
    \varphi_1(c) &= \frac{\theta}{2} \int_{\Omega} |\nabla c(x)|^2 \sd x + \mathcal{E}(c,c),\qquad \dom \varphi_1=  \Hone(\Omega), \\
    \varphi_2(c) &= \int_{\Omega} \phi( c(x)) \sd x,\\
    \dom \varphi_2 &= \dom \varphi=\{c\in \Hone(\Omega) \mid \text{$c\in [a,b]$ a.e.\ in $\Omega$} \}
\end{align*}
Obviously, $\varphi_1|_{\Hone(\Omega)}$ is a bounded, coercive quadratic form on $\Hone(\Omega)$.

    We apply Theorem~\ref{thm:MonotoneLipschitz} with $H_1= \Hone(\Omega)$, $H_0 = \Hzero(\Omega)$, $f=0$, and $\varphi_1,\varphi_2$ as above, where we assume that $\phi(c)\geqslant 0$ without loss of generality. As a consequence there exists a unique solution $c\colon [0,\infty)\to H_0$ to~\eqref{eq:Abstract1}-\eqref{eq:Abstract2} such that  $c\in W^1_2(0,T, H_0)\cap L^\infty(0,T; H_1)$, $\varphi(c)\in L^\infty(0,T)$ for every $T>0$ and $c(t)\in \mathcal{D}(\A_{\theta})$ for almost all $t>0$.

    In order to prove the equivalence of~\eqref{eq:energy_identity}, namely
    \begin{equation}\label{eq:energy_identity_regular}
        E_{\theta}(c_{\theta}(t)) + \int_{0}^{t} \Lpnorm{\nabla \mu_{\theta}(s)}^{2} \, \mathrm{d}s = E_{\theta}(c_{0 \theta}) \quad \text{for all $t > 0$,}
    \end{equation}
    we take advantage of the identity
    \begin{equation*}
        E_{\theta}(c_{\theta}(t)) = \widetilde{F_{\theta}}(c_{\theta}(t)) - \frac{d}{2} \Lpnorm{c_{\theta}(t)}^{2}.
    \end{equation*}
    Because of Lemma~4.3 in \cite[Chapter~IV]{Showalter}, we have
    \begin{equation*}
        \timeder \widetilde{F_{\theta}}(c_{\theta}(t)) = (\partial \widetilde{F_{\theta}}(c_{\theta}(t)), \partial_t c_{\theta}(t))_{\Hzero} = -\|\partial_t c_{\theta}(t)\|_{\Hzero}^2 - (\B c_{\theta}(t),\partial_t c_{\theta}(t))_{\Hzero}.
    \end{equation*}
    Moreover, we have
    \begin{align*}
        (\B c_{\theta}(t),\partial_t c_{\theta}(t))_{\Hzero} &= -d (\lapl_N c_{\theta}(t), \partial_t c_{\theta}(t))_{\Hzero} = d (\nabla c_{\theta}(t),\nabla \lapl_N^{-1} \partial_t c_{\theta}(t))_{L^2}\\
        &= -d \weight{\partial_t c_{\theta}(t), c_{\theta}(t) }_{\Hzero,\Hone} = -\frac{d}{2} \timeder \Lpnorm{c_{\theta}(t)}^{2}
    \end{align*}
    due to \cite[Proposition 23.23]{ZeidlerIIa} and $\|\partial_t c_{\theta}(t)\|_{\Hzero} = \|\lapl_{N} \mu_{\theta}(t)\|_{\Hzero} = \|\mu_{\theta}(t)\|_{\Hone}$. Hence an integration over $[0,t]$ yields
    \begin{multline*}
        \mathcal{E}(c_{\theta}(t), c_{\theta}(t)) + \frac{\theta}{2} \Lpnorm{\nabla c_{\theta}(t)}^{2} + \int_{\Omega} f(c_{\theta}(x,t)) \, \mathrm{d}x + \int_{0}^{t} \Lpnorm{\nabla \mu_{\theta}(s)} \, \mathrm{d}s\\
        = \mathcal{E}(c_{0 \theta}, c_{0 \theta}) + \frac{\theta}{2} \Lpnorm{\nabla c_{0 \theta}}^{2} + \int_{\Omega} f(c_{0 \theta}(x)) \, \mathrm{d}x.
    \end{multline*}
    In particular, this implies
    \begin{equation*}
        \partial_t c_{\theta} = \lapl_{N} \mu_{\theta} \in \Bochner{\Lp}{0}{\infty}{\Hzero(\Omega)}, \qquad \theta^{\sfrac{1}{2}} c_{\theta} \in \Bochner{\Lp[\infty]}{0}{\infty}{\Hone(\Omega)}.
    \end{equation*}

    In order to derive higher regularity, we apply $\partial_t^h$ to~\eqref{eq:Abstract1} and take the inner product with $\partial_t^h c_{\theta}$ in $\Hzero(\Omega)$, where $\partial_t^h f(t)=\frac1h(f(t+h)-f(t))$, $t,h>0$. For any $0 < s < t$, this gives
    \begin{align*}
        &\frac{1}{2} \|\partial_t^h c_{\theta}(t)\|_{\Hzero}^2 + \theta \int_s^t (\nabla \partial_t^h c_{\theta}(\tau), \nabla \partial_t^h c_{\theta}(\tau))_{L^2}\sd \tau + \int_{s}^{t} \mathcal{E}(\partial_{t}^{h} c_{\theta}(\tau), \partial_{t}^{h} c_{\theta}(\tau)) \mathrm{d}\tau\\
        \leqslant & d\int_s^t\|\partial_t^h c_{\theta}(\tau)\|_{L^2}^2\sd \tau + \frac12 \|\partial_t^h c_{\theta}(s)\|_{\Hzero}^2\\
        \leqslant & \frac{c_0}2 \int_s^t\|\partial_t^h c_{\theta}(\tau)\|_{\Hs[\sfrac{\alpha}2]_{(0)}}^2 \sd \tau + C \int_s^t\|\partial_t^h c_{\theta}(\tau)\|_{\Hzero}^2\sd \tau + \frac12 \|\partial_t^h c_{\theta}(s)\|_{\Hzero}^2,
    \end{align*}
    where we have used Ehrling's Lemma applied to $\Hs[\sfrac{\alpha}2]_{(0)}(\Omega)\hookrightarrow\hookrightarrow L^2_{(0)}(\Omega)\hookrightarrow \Hzero(\Omega)$ and
    \begin{equation*}
        (\partial_t^h \mathcal{A}_{\theta} (c_{\theta}(\tau)), \partial_t^h c_{\theta}(\tau))_{\Hzero} \geqslant \theta (\partial_t^h c_{\theta}(\tau), \partial_t^h c_{\theta}(\tau))_{H^1} + \mathcal{E}(\partial_{t}^{h} c_{\theta}(\tau), \partial_{t}^{h} c_{\theta}(\tau)).
    \end{equation*}
    Here $c_0>0$ is such that $\mathcal{E}(u,u)\geq c_0 \|u\|_{\Hs[\sfrac{\alpha}2]_{(0)}}^2$. Furthermore, since $\dt{c_{\theta}} \in \Bochner{\Lp}{0}{\infty}{\Hzero(\Omega)}$, there holds
    \begin{equation*}
        \|\partial_t^h c_{\theta}(s)\|_{\Hzero} \leqslant \frac{1}{h} \int_s^{s+h} \|\partial_t c_{\theta}(\tau)\|_{\Hzero} \sd \tau \to_{h\to 0} \|\partial_t c_{\theta}(s)\|_{\Hzero}
    \end{equation*}
    for almost every $s > 0$ and $\|\partial_t^h c_{\theta}\|_{\Bochner{\Lp}{0}{\infty}{\Hzero}} \leqslant \|\partial_t c_{\theta}\|_{\Bochner{\Lp}{0}{\infty}{\Hzero}}$. Hence $\theta \norm{\Bochner{\Lp}{s}{t}{\Hs_{(0)}}}{\partial_t^h c_{\theta}}^2$, $\norm{\Bochner{\Lp}{s}{t}{\Hs[\sfrac{\alpha}{2}]_{(0)}(\Omega)}}{\partial_{t}^{h} c_{\theta}}$ and $\|\partial_t^h c_{\theta}(t)\|_{\Hzero}$ are uniformly bounded in $h>0$, for all $0 < s < t$. On the other hand, we have
    \begin{equation*}
        \partial_t^h c_{\theta} \to_{h\to 0} \partial_t c_{\theta}
    \end{equation*}
    in $\Bochner{\Lp}{0}{\infty}{\Hzero(\Omega)}$. Thus the uniform (w.r.t.\ $h > 0$) bounds on $\partial_t^h c_{\theta}$ yield that $\partial_t c_{\theta} \in \Bochner{\Lp}{s}{t}{\Hs[\sfrac{\alpha}{2}]_{(0)}(\Omega)} \cap \Bochner{\Lp[\infty]}{s}{t}{\Hs[-1]_{(0)}(\Omega)}$ for every $0 < s < t$.

    In order to derive the estimate near $t=0$, we again apply $\partial_t^h$  to~\eqref{eq:Abstract1} and take the inner product with $t\partial_t^h c_{\theta}$. This gives
    \begin{multline*}
        \frac{t}{2} \norm{\Hs[-1]_{(0)}}{\partial_{t}^{h} c_{\theta}(t)}^{2} + \theta \int_{0}^{t} \tau \Lpnorm{\nabla \partial_{t}^{h} c_{\theta}(\tau)}^{2} \, \mathrm{d}\tau + \int_{0}^{t} \tau \mathcal{E}(\partial_{t}^{h} c_{\theta}(\tau), \partial_{t}^{h} c_{\theta}(\tau)) \, \mathrm{d}\tau\\
        \leqslant \Const \int_{0}^{t} (1+ \tau) \Lpnorm{\partial_{t}^{h} c_{\theta}(\tau)}^{2} \, \mathrm{d}\tau.
    \end{multline*}
    Proceeding as above, we get
    \begin{equation*}
        t^{\sfrac{1}{2}} \partial_t c_{\theta} \in \Bochner{\Lp}{0}{1}{\Hs[\sfrac{\alpha}{2}]_{(0)}(\Omega)} \cap \Bochner{\Lp[\infty]}{0}{1}{\Hs[-1]_{(0)}(\Omega)}.
    \end{equation*}
    This implies $\kappa \mu_{\theta} = \kappa \lapl_{N}^{-1} \partial_t c_{\theta} \in \Bochner{\Lp[\infty]}{0}{\infty}{\Hone(\Omega)}$. Thus~\eqref{eq:DomEstim} yields $\kappa \phi'(c_{\theta}) \in \Bochner{\Lp[\infty]}{0}{\infty}{\Lp(\Omega)}$ since $\kappa \partial F_{\theta}(c) = \kappa \mu_{\theta} + \kappa d c_{\theta} \in \Bochner{\Lp[\infty]}{0}{\infty}{\Lp(\Omega)}$. All these norms are uniformly bounded in $\theta\in (0,1]$.

    We are now ready to pass to the limit for $\theta \to 0$ in~\eqref{eq:Abstract1}. Indeed, for any $\theta > 0$ we have proven that there exist (unique) functions $c_{\theta}(t)$ and $\mu_{\theta}(t)$ satisfying
        \begin{equation}\label{eq:CHr_variational}
        \begin{cases}
            \dt{c_{\theta}} = \lapl_N \mu_{\theta}\\
            (\mu_{\theta},\psi)_{L^2} + \theta (\nabla c_{\theta},\nabla \psi)_{L^2} + \mathcal{E}(c_{\theta}, \psi) + (\phi'(c_{\theta}),\psi)_{L^2} = d (c_{\theta},\psi)_{L^2}
        \end{cases}
    \end{equation}
    for all $\psi \in \Hs_{(0)}(\Omega)$ and for almost every $t > 0$. Moreover, from the previous estimates, for all $\theta\in (0,1]$, we have
    \begin{align*}
        c_{\theta}                          &\in \Bochner{\Lp[\infty]}{0}{\infty}{\Hs[\sfrac{\alpha}{2}]_{(0)}(\Omega)}\\
        \theta^{\sfrac{1}{2}}  c_{\theta}   &\in \Bochner{\Lp[\infty]}{0}{\infty}{\Hs(\Omega)}\\
        \kappa \dt{c_{\theta}}              &\in \Bochner{\Lp[\infty]}{0}{\infty}{\Hs[-1]_{(0)}(\Omega)} \cap \Bochner{\Lp}{0}{\infty}{\Hs[\sfrac{\alpha}{2}]_{(0)}(\Omega)}\\
        \mu_{\theta}                        &\in \Bochner{\Lp}{0}{T}{\Hs(\Omega)} \qquad \text{for all $T > 0$}\\
        \kappa \mu_{\theta}                 &\in \Bochner{\Lp[\infty]}{0}{\infty}{\Hs(\Omega)}\\
        \kappa \phi'(c_{\theta})            &\in \Bochner{\Lp[\infty]}{0}{\infty}{\Lp(\Omega)}
    \end{align*}
    where all the bounds deduced are uniform with respect to $\theta$. Therefore, there exists a sequence $\{ \theta_{n} \}_{n \in \mathbb{N}}$, $\theta_{n} \to_{n \to \infty} 0$ such that $c_{\theta_{n}}$, $\mu_{\theta_{n}}$ and $\phi'(c_{\theta_{n}})$ converge weakly (or weakly\textsuperscript{$\ast$}) in the above spaces to $c$, $\mu$ and $\chi$ respectively as $\theta$ vanishes. More precisely, by a suitable diagonal argument on intervals of the form $[0,m]$, we can assume that also $\mu_{\theta_{n}} \to \mu$ in $\Bochner{\Lp}{0}{m}{\Hs(\Omega)}$ for any $m \in \mathbb{N}$. We can easily pass to the limit in the first equation of~\eqref{eq:CHr_variational} deducing
    \begin{equation*}
        \dt{c(t)} = \lapl \mu(t) \qquad \text{in $\Hs[-1]_{(0)}(\Omega)$, for a.e.\ $t > 0$}
    \end{equation*}
    Let $\psi \in \Cont[\infty](\closure{\Omega})$ and let $s > 0$. Thanks to the convergences listed above, for almost any $t > s$ we can pass to the limit for $\theta \to 0$ in the second equation in \eqref{eq:CHr_variational} to find
    \begin{equation*}
    (\mu(t),\psi)_{L^2(\Omega)} + \mathcal{E}(c(t), \psi) + (\chi(t),\psi)_{L^2(\Omega)} = d (c(t),\psi)_{L^2(\Omega)}
    \end{equation*}
    for almost all $t > 0$ since $s$ can be taken arbitrarily small.

    In order to prove the existence of a weak solution for~\eqref{eq:CH1}--\eqref{eq:CH4} on $\mathbb{R}^{+}$, we only have to identify the (weak) limit $\chi = \lim_{n \to \infty} \phi'(c_{\theta_n})$. Let $0 < s < t$ and $m \in \mathbb{N}$ be fixed. Thanks to Aubin-Lions Lemma, $\dt{c_{\theta_{n}}} \in \Bochner{\Lp}{0}{T}{\Hs[-1]_{(0)}(\Omega)}$ and $c_{\theta_{n}} \in \Bochner{\Lp[\infty]}{0}{T}{\Hs[\sfrac{\alpha}{2}]_{(0)}(\Omega)}$ uniformly in $n$ for all $T > 0$ imply the convergence $c_{\theta_{n}} \to c_{\theta_{n}}$ (up to a subsequence) in $\Cont([0,T);\Lp_{(0)}(\Omega))$ for any $T > 0$. Therefore, we deduce $c_{\theta_{n}}(t) \to c(t)$ almost everywhere in $\Omega$. On the other hand, thanks to Egorov's theorem, there exists a set $\Omega_{m} \subset \Omega$ such that $|\Omega_{m}| \geqslant |\Omega| - \tfrac{1}{2m}$ and on which $c_{\theta_{n}} \to c$ uniformly. We now use the (uniform with respect to $\theta > 0$) estimate on $\phi'(c_{\theta_{n}}(t))$ in $\Lp(\Omega)$. By definition, the quantity
    \begin{equation*}
        M_{\delta, n} \eqdef \left| \left\{ x \in \Omega \mid |c_{\theta_{n}}(x)| > 1-\delta \right\} \right|
    \end{equation*}
    is decreasing in $\delta$ for all $n \in \mathbb{N}$. Since $\phi'(y)$ is unbounded for $y \to \pm 1$, we set
    \begin{equation*}
        c_{\delta} \eqdef \inf_{|c| \geqslant 1-\delta} |\phi'(c)| \to_{\delta\to 0} \infty,
    \end{equation*}
    and we have the uniform Tchebychev inequality
    \begin{equation*}
        \int_{\Omega} |\phi'(c_{\theta_{n}})|^{2}\, \mathrm{d} x \geqslant c_{\delta}^{2} |M_{\delta, n}|.
    \end{equation*}
    From the uniform (with respect to $\theta$) estimate on the norm of $\phi'(c_{\theta_{n}})$ in $\Lp(\Omega)$ we obtain
    \begin{equation*}
        |M_{\delta, n}| \to 0 \qquad \text{for $\delta \to 0$,\, uniformly in $n \in \mathbb{N}$.}
    \end{equation*}
    Therefore, we deduce
    \begin{equation*}
        \lim_{\delta \to 0} \left| \left\{ x \in \Omega \mid |c_{\theta_{n}}(x)| > 1-\delta \right\} \right| = 0 \qquad \text{uniformly in $n \in \mathbb{N}$.}
    \end{equation*}
    Thus there exists $\delta = \delta(m)$, independent of $n$, such that
    \begin{equation*}
        \left| \left\{ x \in \Omega \mid |c_{\theta_{n}}(x)| > 1-\delta \right\} \right| \leqslant \frac{1}{2m} \qquad \forall n \in \mathbb{N}.
    \end{equation*}
    Consider now $N \in \mathbb{N}$ so large that by uniform convergence we have $|c_{\theta_{n}} - c| < \tfrac{\delta}{2}$, $\forall n > N$ on $\Omega_{m}$ and let $\Omega_{m}' \subset \Omega_{m}$ be defined by
    \begin{equation*}
        \Omega_{m}' \eqdef \Omega_{m} \cap \{ x \in \Omega \mid |c_{\theta_{N}}(x)| < 1-\delta \}.
    \end{equation*}
    By the above construction we immediately deduce that $|\Omega_{m}'| > |\Omega| - \tfrac{1}{m}$ and that $|c_{\theta_{n}}(x)| < 1 - \tfrac{\delta}{2}$ for all $n \geqslant N$ and for all $x \in \Omega_{m}'$. Therefore, by the regularity assumptions on the potential $\phi'$ we deduce that $\phi'(c_{\theta_{n}}) \to \phi'(c)$ uniformly on $\Omega_{m}'$. Since $m$ and $s$ are arbitrary we have $\phi'(c_{\theta_{n}}) \to \phi'(c)$ almost everywhere in $\Omega \times \mathbb{R}^{+}$. Finally, the uniqueness of weak and pointwise limits gives $\chi = \phi'(c)$ as claimed\footnote{Let $\{ f_{n} \}_{n \in \mathbb{N}}$ be a sequence of functions such that $f_{n} \toweak f$ in $\Lp[p](\Omega)$ and that $f_{n}(x) \to g(x)$ for a.e.\ $x \in \Omega$. Assume that $f \neq g$ on a set of finite positive measure $\Omega_{0} \subset \Omega$ on which $g$ is bounded. By Egorov's theorem there exists a set of positive measure $\Omega_{1} \subset \Omega_{0}$ such that $f_{n} \to g$ uniformly in $\Omega_{1}$. Therefore, $f_{n} \to g$ in $\Lp[p](\Omega_{1})$ and hence $f_{n} \toweak g$ in $\Lp[p](\Omega_{1})$. This contradicts the uniqueness of weak limits and therefore implies $f=g$ throughout $\Omega$.}.

    We now prove uniqueness of weak solutions. To this end let $c_0^j\in Z_0$, $j=1,2$, and let $c_j(t)$ be weak solutions to~\eqref{eq:Abstract1} with initial values $c_j(0) = c_0^j$. Set $c \eqdef c_{1} - c_{2}$ and $\mu \eqdef \mu_{1} - \mu_{2}$. Then multiplying the equation
    \begin{equation*}
        \dt{c} = \lapl_N \mu
    \end{equation*}
    by $c(t)$ in $\Hzero(\Omega)$  we deduce
    \begin{equation*}
        \frac{1}{2} \timeder \norm{\Hs[-1]_{(0)}}{c(t)}^{2} + \mathcal{E}(c(t), c(t)) + \Ltwoprod{\phi'(c(t))}{c(t)} = d\Lpnorm{c(t)}^{2}.
    \end{equation*}
    Using the inequality
    $$
    \Lpnorm{w}^{2} \leqslant \norm{\Hs[\sfrac{\alpha}{2}]_{(0)}}{w}^{\sfrac{4}{(2 + \alpha)}} \norm{\Hs[-1]_{(0)}}{w}^{\sfrac{2\alpha}{(2 + \alpha)}} \leqslant \Const_{\epsilon} \norm{\Hs[-1]_{(0)}}{w}^{2} + \epsilon \norm{\Hs[\sfrac{\alpha}{2}]_{(0)}}{w}^{2}
    $$
    and the coercivity of $\mathcal{E}$, that is,
    \begin{equation*}
        \frac{1}{\Const} \norm{\Hs[\sfrac{\alpha}{2}]_{(0)}}{c(t)}^{2} \leqslant \mathcal{E}(c(t), c(t)) + \Ltwoprod{\phi'(c(t))}{c(t)},
    \end{equation*}
    we infer
    \begin{equation*}
        \frac{1}{2} \timeder \norm{\Hs[-1]_{(0)}}{c(t)}^{2} + \frac{1}{\Const} \norm{\Hs[\sfrac{\alpha}{2}]}{c(t)}^{2} \leqslant \Const \norm{\Hs[-1]_{(0)}}{c(t)}^{2}.
    \end{equation*}
    Hence Gronwall's lemma implies
    \begin{equation}\label{eq:continuous_dependence}
        \norm{\Hs[-1]_{(0)}}{c(t)}^{2} \leqslant e^{2\Const t} \norm{\Hs[-1]_{(0)}}{c_{0}}^{2},
    \end{equation}
    which entails uniqueness whenever $c_{0}^{1} = c_{0}^{2}$.

    Let us now prove the energy identity~\eqref{eq:energy_identity}. First we observe that by taking $\eta = (-\lapl_{N})^{-1}c$ and $\varphi = c$ in~\eqref{eq:PDE_variational1} and~\eqref{eq:PDE_variational2} respectively we deduce~\eqref{eq:energy_identity} for almost any $t>0$ in the same way as in \cite[Proof of Theorem 1.2]{AsymptoticCH}. In order to obtain the energy identity for all times, we observe that any weak solution can be approximated by a family of functions $\{ c_{\theta} \}_{\theta > 0}$ defined by~\eqref{eq:CHr}. From the regularity of the solution $c$, we know that $c \in \Cont([0,T]; \Hs[\beta](\Omega))$ for all $\beta < \frac{\alpha}{2}$, Moreover, for all positive times $t$, we have $c_{\theta} \in \Cont([0,T]; \Hs[\sfrac{\alpha}{2}](\Omega))$ and therefore $c_{\theta}(t) \in \Hs[\sfrac{\alpha}{2}](\Omega)$ with uniform bound in $\theta$. Passing to the limit for $\theta \to 0$ and arguing by contradiction, we deduce $c(t) \in \Hs[\sfrac{\alpha}{2}](\Omega)$ for all positive times. Finally, since solution departing from $c(t) \in \Hs[\sfrac{\alpha}{2}](\Omega)$ are unique and since we already know that the energy identity holds for almost all times, for any $t>0$ we can find $\overline{t} > t$ such that the two following identities hold
    \begin{align*}
        E(c(\overline{t})) &+ \int_{0}^{\overline{t}} \Lpnorm{\nabla \mu(s)}^{2} \, \mathrm{d}s = E(c_{0})\\
        E(c(\overline{t})) &+ \int_{t}^{\overline{t}} \Lpnorm{\nabla \mu(s)}^{2} \, \mathrm{d}s = E(c(t)).
    \end{align*}
    Taking the difference we deduce for any time $t$
    \begin{equation*}
        E(c(t)) + \int_{0}^{t} \Lpnorm{\nabla \mu(s)}^{2} \, \mathrm{d}s = E(c_{0}),
    \end{equation*}
    which is the desired energy identity for all $t>0$.

    We still have to prove the continuity of the map $c_0 \mapsto c(t)$. Observe that the strong continuity in $\Hs[-1]_{(0)}(\Omega)$ is an immediate consequence of the continuous dependence estimate~\eqref{eq:continuous_dependence}. Moreover, since
    \begin{equation*}
        Z_0 \ni c_0 \mapsto c(t) \in \Hs[\alpha]_{\loc}(\Omega) \cap \Hs[\sfrac{\alpha}{2}]_{(0)}(\Omega)
    \end{equation*}
    is a bounded mapping, interpolation yields the continuity $c_0 \mapsto c(t)$ with respect to the $\Hs[\gamma]_{(0)}(\Omega)$-norm with $\gamma< \tfrac{\alpha}{2}$. Because of the boundedness of $[0,\infty)\ni t\mapsto c(t)\in \Hs[\alpha]_{(0)}(\Omega)$, this mapping is also weakly continuous.

    Finally, note that the energy equality holding for all $t>0$ entails the continuity of solutions $c(t)$ with values in $\Hs[\sfrac{\alpha}{2}]_{(0)}(\Omega)$ so that $c(t) \to_{t \to 0} c_{0}$ in $\Hs[\sfrac{\alpha}{2}]_{(0)}(\Omega)$ using that weak convergence plus convergence of norms imply strong convergence. This finishes the proof.

\section{Long-time behavior}\label{S:attractor}

Here we describe the global asymptotic behavior of the dynamical system associated with~\eqref{eq:PDE_variational1}--\eqref{eq:PDE_variational2}. As above we can reduce to the case that $c_0$ has mean value zero by adding a suitable constant. Let us define the (metric) phase-space
$$
\mathcal{X} = \left\{ z\in\Hs[\sfrac{\alpha}{2}]_{(0)}(\Omega)\,:\, \int_{\Omega} f(z)\, \mathrm{d}x < \infty \right\}
$$
endowed with the metric
\begin{equation*}
         \quad d_{\mathcal{X}}(z_{1}, z_{2}) = \norm{\Hs[\sfrac{\alpha}{2}]_{(0)}(\Omega)}{z_{1} - z_{2}} + \left| \int_{\Omega} f(z_{1}) \, \mathrm{d}x - \int_{\Omega} f(z_{2}) \, \mathrm{d}x \right|.
\end{equation*}
Thanks to Theorem~\ref{thm:Existence} and inequality~\eqref{eq:continuous_dependence}, we can define a closed semigroup (see \cite{PZ-07}) on $\mathcal{X}$ by setting $S(t)c_0=c(t)$, where $c$ is the unique weak solution to~\eqref{eq:PDE_variational1}--\eqref{eq:PDE_variational2} with initial datum $c_0$.

Our result is the following
\begin{theorem}\label{thm:attractor_existence}
   The dynamical system $(\mathcal{X},S(t))$ has a (connected) global attractor.
\end{theorem}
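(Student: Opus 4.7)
The plan is to verify the three standard ingredients for existence of a global attractor for the closed semigroup $(\mathcal{X}, S(t))$: the semigroup structure, a bounded absorbing set, and asymptotic compactness. Theorem~\ref{thm:Existence} already guarantees that $S(t) c_{0} = c(t)$ defines a well-defined closed semigroup on $\mathcal{X}$ in the sense of~\cite{PZ-07}---closedness following from the continuous dependence estimate~\eqref{eq:continuous_dependence} combined with the lower semicontinuity in Lemma~\ref{lem:LSC}---and the energy identity~\eqref{eq:energy_identity} makes $E$ a strict Lyapunov functional. Since $f$ is bounded below on $[a,b]$ and $\mathcal{E}(c,c) \sim \|c\|^{2}_{\Hs[\sfrac{\alpha}{2}]_{(0)}}$, each sublevel $\{z \in \mathcal{X} \mid E(z) \leqslant R\}$ is bounded in $\mathcal{X}$ and positively invariant under $S(t)$, and using $E(c(t)) \leqslant E(c_{0})$ we see that such a sublevel set absorbs every bounded subset of $\mathcal{X}$.

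The delicate step is asymptotic compactness in the metric of $\mathcal{X}$. Let $c_{0,n}$ be bounded in $\mathcal{X}$ and $t_{n} \to \infty$, and set $c_{n} = S(\cdot) c_{0,n}$. For $t_{n} \geqslant 1$ the regularity in Theorem~\ref{thm:Existence} (where $\kappa(t_{n}) \simeq 1$) furnishes uniform bounds on $c_{n}(t_{n})$ in $\Hs[\sfrac{\alpha}{2}](\Omega) \cap \Cont[\beta](\closure{\Omega})$ and on $\phi'(c_{n}(t_{n}))$ in $\Lp(\Omega)$. Arzel\`a--Ascoli extracts a subsequence converging in $\Cont[\beta'](\closure{\Omega})$ for every $\beta' < \beta$, and hence almost everywhere, to some $c^{\ast}$, while boundedness in $\Hs[\sfrac{\alpha}{2}]$ provides, along a further subsequence, $c_{n}(t_{n}) \toweak c^{\ast}$ in $\Hs[\sfrac{\alpha}{2}](\Omega)$. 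The convergence of the potential term is handled by the Tchebychev argument used in the proof of Theorem~\ref{thm:Existence}: the uniform $\Lp$-bound on $\phi'(c_{n}(t_{n}))$ forces $|\{\,|c_{n}(t_{n})| \geqslant 1-\delta\,\}|$ to tend to zero uniformly in $n$ as $\delta \to 0$, so pointwise convergence and dominated convergence yield $\int_{\Omega} \phi(c_{n}(t_{n})) \sd x \to \int_{\Omega} \phi(c^{\ast}) \sd x$ and consequently $\int_{\Omega} f(c_{n}(t_{n})) \sd x \to \int_{\Omega} f(c^{\ast}) \sd x$.

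To upgrade the weak $\Hs[\sfrac{\alpha}{2}]$-convergence to strong convergence I would invoke the Lyapunov structure. Since $t \mapsto E(c_{n}(t))$ is non-increasing and bounded below, a standard gradient-system argument shows that the limit point $c^{\ast}$ is stationary (applying the weak continuity of $S(\tau)$ and the Lyapunov property to $S(\tau) c^{\ast}$), and moreover that $E(c_{n}(t_{n})) \to E(c^{\ast})$ along the extracted subsequence. Combined with the convergence of the potential term, this forces $\mathcal{E}(c_{n}(t_{n}), c_{n}(t_{n})) \to \mathcal{E}(c^{\ast}, c^{\ast})$, so that weak convergence plus convergence of norms (Radon--Riesz property) implies strong $\Hs[\sfrac{\alpha}{2}]$-convergence of $c_{n}(t_{n})$ to $c^{\ast}$; hence $d_{\mathcal{X}}(c_{n}(t_{n}), c^{\ast}) \to 0$ as required.

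Having verified closedness, dissipativity and asymptotic compactness, the existence of the global attractor follows from the standard theorem for closed semigroups on metric spaces of~\cite{PZ-07}. Connectedness of the attractor follows from the convexity of $\mathcal{X}$---the constraint $\int_{\Omega} f(z) \sd x < \infty$ amounts to $z(x) \in [a,b]$ a.e., which carves out a convex subset of $\Hs[\sfrac{\alpha}{2}]_{(0)}(\Omega)$---combined with the continuity of $S(t)$ and the standard connectedness criterion for attractors on path-connected absorbing sets. I expect the main obstacle to be precisely the passage to strong $\Hs[\sfrac{\alpha}{2}]$-convergence in the asymptotic compactness step, since the H\"older exponent $\beta$ provided by Lemma~\ref{L:continuity} is not a priori larger than $\alpha/2$ (recall $\alpha \in (1,2)$, so $\alpha/2 > 1/2$), which prevents one from concluding by a direct Sobolev--Slobodeckii embedding $\Cont[\beta'] \hookrightarrow \Hs[\sfrac{\alpha}{2}]$ and forces the argument to be closed via the Lyapunov structure as above.
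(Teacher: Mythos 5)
Your skeleton (closed semigroup, bounded absorbing set, asymptotic compactness, then the abstract result of \cite{PZ-07}) is the same as the paper's, but two of the three ingredients are not actually established by your argument. First, the absorbing set: monotonicity of $E$ along trajectories shows only that sublevel sets $\{E\leqslant R\}$ are positively invariant and that orbits of bounded sets remain bounded; it does not produce a \emph{single} bounded set absorbing every bounded subset of $\mathcal{X}$, which is what \cite[Thm.~2]{PZ-07} requires. A bounded set $B$ with $\sup_B E>R$ need never enter $\{E\leqslant R\}$ on the strength of $E(c(t))\leqslant E(c_0)$ alone. The paper closes this by testing \eqref{eq:PDE_variational2} with $c$, using the convexity inequality $\phi'(s)s\geqslant\phi(s)-\phi(0)$ to bound $Y(t):=\mathcal{E}(c(t),c(t))+\int_\Omega f(c(t))\,\mathrm{d}x$ by $\tfrac12\|\nabla\mu(t)\|_{L^2}^2+C$, adding the differentiated energy identity to obtain $\frac{d}{dt}Y+Y\leqslant C$, and applying Gronwall to get $Y(t)\leqslant e^{-t}Y(0)+C$. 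Some such dissipative differential inequality is indispensable here and cannot be replaced by the Lyapunov property alone.

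Second, asymptotic compactness. Your upgrade from weak to strong $H^{\alpha/2}$ convergence rests on the claim that the limit $c^{*}$ of $S(t_n)c_{0,n}$ is stationary and that $E(c_n(t_n))\to E(c^{*})$. The gradient-system argument you invoke applies to the $\omega$-limit set of a \emph{single} trajectory; with varying initial data $c_{0,n}$ the limit point is merely a point of the attractor (for instance a point on a heteroclinic orbit) and need not be an equilibrium, nor does $E(c_n(t_n))$ converge to $E(c^{*})$ by any soft argument, so the Radon--Riesz step does not close. The paper obtains the missing compactness from the equation itself: for $t\geqslant\epsilon$ the identity $\mathcal{L}c(t)=\mu(t)-f'(c(t))$ gives a uniform bound in $L^2_{(0)}(\Omega)$, the embedding $L^2_{(0)}(\Omega)\hookrightarrow\hookrightarrow H^{-\alpha/2}_{(0)}(\Omega)$ is compact, and $\mathcal{L}^{-1}$ is bounded from $H^{-\alpha/2}_{(0)}(\Omega)$ to $H^{\alpha/2}_{(0)}(\Omega)$, so $c(t)$ lies in a fixed compact subset $K_\epsilon$ of $H^{\alpha/2}_{(0)}(\Omega)$; combined with the equicontinuity in time coming from the bound on $\partial_t c_n$ and Ascoli--Arzel\`a, this yields $c_n(t)\to c(t)$ strongly in $H^{\alpha/2}_{(0)}(\Omega)$ for every $t>0$ (indeed $\alpha(S(t)B)=0$ for every $t>0$, not only along $t_n\to\infty$). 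Your handling of the potential term via the Tchebychev/Egorov argument and the appeal to connectedness of $\mathcal{X}$ for connectedness of the attractor are consistent with the paper.
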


\begin{proof} Let us show first that the dynamical system has a bounded absorbing set.
Consider equation~\eqref{eq:PDE_variational2} defining the chemical potential and choose $c$ as test function. From this we deduce that
    \begin{align}
        \mathcal{E}(c(t), c(t)) + \Ltwoprod{f'(c(t))}{c(t)} =& \Ltwoprod{\mu(t)}{c(t)} = \Ltwoprod{\mu(t)- \mean{\mu(t)}}{c(t)} \notag\\
        \leqslant& \Const \int_{\Omega} |\nabla \mu(x,t)| \, \mathrm{d}x \leqslant \frac{1}{2} \Lpnorm{\nabla \mu}^{2} + \Const \label{eq:complete_energy_1}
    \end{align}
    holds for almost every $t \geqslant 0$. Here we used the fact that $c(t)$ has zero mean and that it is pointwise bounded. Moreover, from the assumptions on the potential~$f$ we have
    \begin{equation*}
        f(c) = \phi(c) - \frac{d}{2} c^{2}
    \end{equation*}
    where $\phi$ is convex. By the convexity of~$\phi$ we deduce
    \begin{equation*}
        \phi'(s)s \geqslant \phi(s) - \phi(0) \qquad \forall s
    \end{equation*}
    and therefore we can write
    \begin{align*}
        & \Ltwoprod{f'(c(t))}{c(t)} = \Ltwoprod{\phi'(c)}{c} - d\Lpnorm{c(t)}^{2}\\
        \geqslant& \int_{\Omega} \phi(c(t))\, \mathrm{d} x - |\Omega| \phi(0) - d\Lpnorm{c(t)}^{2} = \int_{\Omega} f(c(t))\,\mathrm{d} x - \Const.
    \end{align*}
    Substituting this estimate from below in the inequality~\eqref{eq:complete_energy_1} above we get
    \begin{equation*}
        \mathcal{E}(c(t), c(t)) + \int_{\Omega} f(c(t))\,\mathrm{d} x \leqslant \frac{1}{2} \Lpnorm{\nabla \mu(t)}^{2} + \Const.
    \end{equation*}
    We now consider the energy identity~\eqref{eq:energy_identity} and differentiate it with respect to time. This gives
    \begin{equation*}
        \timeder \left( \mathcal{E}(c(t), c(t)) + \int_{\Omega} f(c(t))\,\mathrm{d} x \right) + \Lpnorm{\nabla \mu(t)}^{2} \leqslant 0.
    \end{equation*}
    Summing the last two inequalities together, we infer
    \begin{equation*}
        \timeder \left( \mathcal{E}(c(t), c(t)) + \int_{\Omega} f(c(t))\,\mathrm{d} x \right) + \mathcal{E}(c(t), c(t)) + \int_{\Omega} f(c(t))\,\mathrm{d} x \leqslant \Const
    \end{equation*}
    for almost every $t \geqslant 0$. Gronwall's Lemma thus gives
    \begin{equation*}
        \mathcal{E}(c(t), c(t)) + \int_{\Omega} f(c(t))\,\mathrm{d} x \leqslant e^{-t} \left( \mathcal{E}(c_{0}, c_{0}) + \int_{\Omega} f(c_{0})\,\mathrm{d} x \right) + \Const
    \end{equation*}
    where the constant~$\Const$ appearing on the right hand side is independent of the initial datum~$c_{0}$. This proves that there is a bounded absorbing set $\mathcal{B}$ in $\mathcal{X}$.

    On account of \cite[Thm.~2]{PZ-07}, we only need to prove that there exists a divergent sequence $\{t_n\}$ such that $\alpha(S(t_n)B)=0$ as $n$ goes to $\infty$.
    Here $\alpha(E)$ denotes the Kuratowski measure of noncompactness. Actually we prove more, that is, $\alpha(S(t)B)=0$ for all $t>0$.

     Let $\{c_{0n}\}\subset \mathcal{X}$ be bounded and set $c_{n}(t)=S(t)c_{0n}$. From estimates analogous to those deduced in our proof of existence of solutions in Section~\ref{S:existence} and thanks to the existence of the absorbing set deduced above, we have the following (uniform with respect to $n$) estimates:
    \begin{alignat*}{2}
        c_{n}       &\in \Bochner{\Lp[\infty]}{0}{T}{\Hs[\sfrac{\alpha}{2}]_{(0)}(\Omega)}      &\qquad &\text{for all $T > 0$}\\
        \dt{c_{n}}  &\in \Bochner{\Lp}{s}{T+s}{\Hs[\sfrac{\alpha}{2}]_{(0)}(\Omega)}            &\qquad &\text{uniformly in $s \geqslant\varepsilon>0$, for all $T > 0$}\\
        \mu_{n}     &\in \Bochner{\Lp}{s}{T+s}{\Hs(\Omega)}                                     &\qquad &\text{uniformly in $s \geqslant 0$, for all $T > 0$}\\
        f'(c_{n})   &\in \Bochner{\Lp[\infty]}{\epsilon}{T}{\Lp(\Omega)}                        &\qquad &\text{for all $T > \epsilon > 0$}.
    \end{alignat*}
    Arguing as in the proof of existence, up to a subsequence, we deduce that there exist functions $c, \mu$ satisfying for any fixed $\epsilon$ and $T$
    \begin{alignat*}{2}
        c_{n}       &\toweakstar c      &\quad &\text{in $\Bochner{\Lp[\infty]}{0}{T}{\Hs[\sfrac{\alpha}{2}]_{(0)}(\Omega)}$}\\
        \dt{c_{n}}  &\toweak \dt{c}     &\quad &\text{in $\Bochner{\Lp}{s}{T+s}{\Hs[\sfrac{\alpha}{2}]_{(0)}(\Omega)}$}\\
        \mu_{n}     &\toweak \mu        &\quad &\text{in $\Bochner{\Lp}{s}{T+s}{\Hs(\Omega)}$}\\
        f'(c_{n})   &\toweakstar f'(c)  &\quad &\text{in $\Bochner{\Lp[\infty]}{\epsilon}{T}{\Lp(\Omega)}$}
    \end{alignat*}
    as $n\to\infty$. Here $c$ and $\mu$ satisfy \eqref{eq:PDE_variational1}--\eqref{eq:PDE_variational2}. On the other hand we also know that
    \begin{equation*}
        c_{n} \in \Cont( [0,T]; \Hs[\sfrac{\alpha}{2}]_{(0)}(\Omega) ) \qquad \text{for all $T > 0$}
    \end{equation*}
    with a uniform bound in $n$. Moreover, the estimate on $\dt{c_{n}}$ implies that the family $\{ c_{n} \}_{n \in \mathbb{N}}$ is also equicontinuous with values in $\Hs[\sfrac{\alpha}{2}]_{(0)}(\Omega)$. Indeed, this follows from the following simple computation
    \begin{align*}
        &\norm{\Hs[\sfrac{\alpha}{2}]_{(0)}(\Omega)}{c_{n}(t) - c_{n}(s)}\\
        \leqslant& \int_{s}^{t} \norm{\Hs[\sfrac{\alpha}{2}]_{(0)}(\Omega)}{\dt{c_{n}}(\tau)} \, \mathrm{d}\tau \leqslant (t-s)^{\sfrac{1}{2}} \left( \int_{s}^{t} \norm{\Hs[\sfrac{\alpha}{2}]_{(0)}}{\dt{c_{n}}(\tau)}^{2} \, \mathrm{d}\tau \right)^{\sfrac{1}{2}}.
    \end{align*}
    Moreover, we know that
    \begin{equation*}
        \mathcal{L} c(t) = \mu(t) - f'(c(t)) \in \Lp_{(0)}(\Omega)
    \end{equation*}
    holds for almost any $t > 0$. In particular, in the last expression, the right hand side is uniformly bounded for all $t \geqslant \epsilon > 0$. Since $\Lp_{(0)}(\Omega) \compactsubset \Hs[-\sfrac{\alpha}{2}]_{(0)}(\Omega)$ and since $\mathcal{L}^{-1}$ is continuous as an operator from $\Hs[-\sfrac{\alpha}{2}]_{(0)}(\Omega)$ to $\Hs[\sfrac{\alpha}{2}]_{(0)}(\Omega)$ we deduce that
    \begin{equation*}
        c(t) \in K_{\epsilon} \compactsubset \Hs[\sfrac{\alpha}{2}]_{(0)}(\Omega) \qquad \text{for a.e.\ $t \geqslant \epsilon > 0$}
    \end{equation*}
    holds with $K_{\epsilon}$ independent of $t$. Moreover, thanks to the continuity of the solutions $c_{n}$ taking values in $\Hs[\sfrac{\alpha}{2}]_{(0)}(\Omega)$ this is true for all $t \geqslant \epsilon > 0$. Appealing to the Ascoli-Arzel\`a Theorem (see, for instance, \cite[Theorem~4.43]{Folland1999}), this implies that, up to a subsequence, $c_{n} \to c$ in $\Cont([\epsilon, T]; \Hs[\sfrac{\alpha}{2}]_{(0)}(\Omega))$ uniformly on the interval $[\epsilon, T]$. Applying the above argument to intervals of the form $[m^{-1}, m]$, $m \in \mathbb{N}$ and performing a diagonal selection procedure, we finally obtain
    \begin{equation*}
        c_{n}(t) \to c(t) \qquad \text{in $\Hs[\sfrac{\alpha}{2}]_{(0)}(\Omega)$, for all $t > 0$}.
    \end{equation*}
    We now consider the convergence
    \begin{equation*}
        \int_{\Omega} f(c_{n}(t)) \, \mathrm{d}x \to \int_{\Omega} f(c(t)) \, \mathrm{d}x \qquad \text{for all $t > 0$.}
    \end{equation*}
    This follows as a consequence of the dominated convergence on account of the boundedness of $f$ and pointwise convergence almost everywhere in $\Omega$ of $\{f(c_{n}(\cdot, t))\}$. The latter is implied by the strong convergence of $c_{n}(t)$ in $\Hs[\sfrac{\alpha}{2}]_{(0)}(\Omega)$.
    Summing up we have that, up to a subsequence, $c_n(t)$ converges to $c(t)$ in $\mathcal{X}$ for any $t>0$. Therefore we are in a position to apply \cite[Thm.~2 and Prop.~4]{PZ-07} to conclude the proof.
\end{proof}

\begin{remark}
    In order to prove the connectedness of the attractor, in~\cite{PZ-07} (see comments after Theorem~2) it is assumed that balls of the phase space $\mathcal{X}$ are connected. However, connectedness of the attractor can be obtained by assuming only that the whole phase space is connected as was shown in~\cite[Thm.~4.2 and Cor.~.3]{Ball1997} (cf. also \cite{Ball1998}). In our case, although connectedness of the balls of $\mathcal{X}$ does not seem evident, nonetheless the $d$-convexity of $F$ implies that $\mathcal{X}$ is connected.
\end{remark}

\section{Boundary conditions for variational solutions}\label{S:BC_regular}

In this section we want to discuss the natural boundary condition satisfied by the weak solution $u$ to the problem
\begin{equation}\label{eq:nonlocal_elliptic}
    \mathcal{E}(u,\psi) = \Ltwoprod{g}{\psi} \qquad \forall \psi \in \Hs[\sfrac{\alpha}{2}]_{(0)}(\Omega).
\end{equation}
Here $g$ is a given function with $m(g)=0$. Of course, we can confine ourselves to consider the linear nonlocal equation neglecting the derivative of the potential $f$. Note that \eqref{eq:nonlocal_elliptic} also holds true for all $\psi\in \Hs[\sfrac{\alpha}{2}](\Omega)$ since both sides vanish on constants. For simplicity we only consider the case $\Omega=\R^n_+$. But the case of a bounded sufficiently smooth domain can be reduced to this case by standard techniques.

The main theorem is a conditional result, namely,
\begin{theorem}\label{thm:boundary_regularity}
 {Let $\Omega=\R^n_+$ and $1\leq p\leq \infty$ such that $\alpha-1>\frac{n}p$.}    Let $\alpha > \frac{3}{2}$ and let $u \in \Hs[\sfrac{\alpha}{2}]_{(0)}(\Omega) \cap \Hs[\alpha]_{\loc}(\Omega)$ be a solution to~\eqref{eq:nonlocal_elliptic} with $g\in L^p_{(0)}(\Omega)$. Suppose that $u \in \Cont[1,\beta](\closure{\Omega})$. If $n\geqslant 2$, we assume that the following limit exists
 {   \begin{equation}\label{eq:direction_kernel}
        \vect{n}_{x_{0}} = \lim_{\delta \to 0} \delta^{-1-n+\alpha}{\int_{\Omega} \int_{ \Omega} (x-y) (\varphi_\delta(x) - \varphi_\delta(y)) k(x, y, x-y) \, \mathrm{d}x \mathrm{d}y}
    \end{equation}
  and is non-zero,  where
  \begin{equation*}
        \varphi_{\delta}(x) =
        \begin{cases}
            1 - \frac{| x - x_{0} |}{\delta}    &\text{if $|x-x_{0}| < \delta$}\\
            0                               &\text{otherwise.}
        \end{cases}
      \end{equation*}
    }
    If $n=1$, let $ \vect{n}_{x_{0}}=1$. Then we have
    \begin{equation}\label{eq:Neumannbc}
      \nabla u(x_{0}) \cdot \vect{n}_{x_{0}} = 0 \qquad \forall\,x_0\in\partial \Omega.
    \end{equation}
  \end{theorem}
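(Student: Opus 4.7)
The strategy is to plug the tent function $\varphi_{\delta}$ from~\eqref{eq:direction_kernel} into the weak formulation~\eqref{eq:nonlocal_elliptic}, normalize by $\delta^{1+n-\alpha}$, and pass to the limit $\delta\to 0$. Since $\varphi_{\delta}$ is Lipschitz with support of diameter $2\delta$ around $x_{0}$, we have $\varphi_{\delta}\in H^{1}(\Omega)\hookrightarrow H^{\alpha/2}(\Omega)$, and because both sides of \eqref{eq:nonlocal_elliptic} are insensitive to adding constants to the test function, no mean-zero correction is needed.

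For the right-hand side, H\"older's inequality combined with $\|\varphi_{\delta}\|_{p'}\leqslant C\delta^{n/p'}$ yields $|(g,\varphi_{\delta})_{L^{2}}|\leqslant C\|g\|_{p}\delta^{n/p'}$, so that multiplying by $\delta^{-(1+n-\alpha)}$ gives a bound $O(\delta^{\alpha-1-n/p})$, which vanishes thanks to the hypothesis $\alpha-1>n/p$. For the left-hand side I would use the first-order Taylor expansion $u(x)-u(y)=\nabla u(x_{0})\cdot(x-y)+R(x,y)$ and split
\begin{equation*}
    2\mathcal{E}(u,\varphi_{\delta}) = \nabla u(x_{0})\cdot \int_{\Omega}\int_{\Omega} (x-y)(\varphi_{\delta}(x)-\varphi_{\delta}(y)) k(x,y,x-y)\,\mathrm{d}x\,\mathrm{d}y + \mathrm{Err}(\delta).
\end{equation*}
By the hypothesis on $\vect{n}_{x_{0}}$, the first term divided by $\delta^{1+n-\alpha}$ converges to $\nabla u(x_{0})\cdot\vect{n}_{x_{0}}$. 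Hence the theorem will follow once one shows that $\mathrm{Err}(\delta)=o(\delta^{1+n-\alpha})$.

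The remainder estimate is the main technical obstacle. The assumption $u\in \Cont[1,\beta](\overline{\Omega})$ yields, via the mean value theorem, $|R(x,y)|\leqslant C|x-y|\min\{1,\max(|x-x_{0}|,|y-x_{0}|)^{\beta}\}$, while $|\varphi_{\delta}(x)-\varphi_{\delta}(y)|\leqslant \min\{1,|x-y|/\delta\}$ and $k\leqslant C|x-y|^{-n-\alpha}$ by~\eqref{k-ass-two}. I would split the integration domain into three pieces. In the near-field region (A), where both $x,y\in B_{2\delta}(x_{0})$, the H\"older factor contributes $\delta^{\beta}$ and the double integral of $|x-y|^{2-n-\alpha}$ over $B_{2\delta}\times B_{2\delta}$ is $O(\delta^{n+2-\alpha})$ (finite because $\alpha<2$), producing a contribution $O(\delta^{n+1-\alpha+\beta})$. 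In the annular region (B), where one point lies in $B_{\delta}(x_{0})$ and the other in $\{2\delta<|y-x_{0}|\leqslant 1\}\cap\Omega$, the bump $\varphi_{\delta}$ vanishes on the far point, so $|x-y|\sim|y-x_{0}|$ and polar coordinates give $O\!\bigl(\delta^{n}\int_{2\delta}^{1} r^{\beta-\alpha}\,\mathrm{d}r\bigr)$, which after division by $\delta^{1+n-\alpha}$ is either $O(\delta^{\beta})$, $O(\delta^{\alpha-1})$, or $O(\delta^{\alpha-1}|\log\delta|)$ depending on the sign of $\beta-\alpha+1$. In the tail region (C), where $|y-x_{0}|>1$, the cruder global bound $|R|\leqslant C|x-y|$ combined with $\int_{1}^{\infty} r^{-\alpha}\,\mathrm{d}r<\infty$ (using $\alpha>1$) yields $O(\delta^{n})$. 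In every regime, division by $\delta^{1+n-\alpha}$ produces a positive power of $\delta$, so the error vanishes in the limit and one concludes $\nabla u(x_{0})\cdot\vect{n}_{x_{0}}=0$. For $n=1$ the splitting degenerates and the same argument applies with $\Omega=\R_{+}$, $x_{0}=0$, and $\vect{n}_{x_{0}}=1$ prescribed.
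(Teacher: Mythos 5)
Your proposal follows essentially the same route as the paper's proof: test \eqref{eq:nonlocal_elliptic} with the tent function $\varphi_\delta$, Taylor-expand $u$ at $x_0$ so that the leading term is precisely the integral in \eqref{eq:direction_kernel} contracted with $\nabla u(x_0)$, bound the right-hand side by $\|g\|_p\,\delta^{n/p'}$, and divide by $\delta^{1+n-\alpha}$ using $\alpha-1>\frac{n}{p}$. Your three-region remainder estimate is sound and in fact slightly more careful than the paper's treatment of the terms $J_2,J_3$ (which writes the Taylor remainder as $O(\delta^\beta)|x-y|$ even when the far point is at distance $O(1)$ from $x_0$); your separate tail bound $O(\delta^n)=o(\delta^{1+n-\alpha})$, valid since $\alpha>1$, is what rigorously justifies that step.
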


\begin{remark}\label{rem:equation a.e.}
    Observe that~\eqref{eq:nonlocal_elliptic} holds in particular for any $\psi \in \Cont[\infty]_{0}(\Omega)$. Arguing as in~\cite[Lemma~4.2]{Abels2007} (see also~\cite[Lemma~3.5]{Abels2007}), we have
    \begin{align*}
        &\mathcal{E}(u, \psi)\\
        =& \lim_{\epsilon \to 0} \frac{1}{2} \int_{ \{ |x-y| > \epsilon \} \cap \Omega} (u(x) - u(y))(\psi(x) - \psi(y)) k(x, y, x-y) \, \mathrm{d}x\mathrm{d}y\\
        =& \lim_{\epsilon \to 0} \frac{1}{2} \int_{\Omega} \int_{ \Omega \setminus B_{\epsilon}(x) } (u(x) - u(y)) \psi(x) k(x, y, x-y) \, \mathrm{d}x\mathrm{d}y\\
        & {} + \lim_{\epsilon \to 0} \frac{1}{2} \int_{\Omega} \int_{ \Omega \setminus B_{\epsilon}(x) } (u(x) - u(y)) \psi(x) k(y, x, y-x) \, \mathrm{d}x\mathrm{d}y\\
        =& \lim_{\epsilon \to 0} \int_{\Omega} \int_{ \Omega \setminus B_{\epsilon}(x) } (u(x) - u(y)) k(x, y, x-y) \, \mathrm{d}y \, \psi(x) \mathrm{d}x\\
        =& \int_{\Omega} \mathcal{L}u(x) \psi(x) \, \mathrm{d}x
    \end{align*}
    provided that $u \in \Hs[\alpha]_{\loc}(\Omega)\cap \Hs[\sfrac{\alpha}2](\Omega)$. Therefore,
    \begin{equation*}
        \Ltwoprod{\mathcal{L}u}{\psi} = \Ltwoprod{g}{\psi}
    \end{equation*}
    holds for all $\psi \in \Cont[\infty]_{0}(\Omega)$. This implies that the weak solution $u$ to~\eqref{eq:nonlocal_elliptic} satisfies the equation
    \begin{equation*}
        \mathcal{L}u(x) = g(x) \qquad \text{a.e.\ $x \in \Omega$.}
    \end{equation*}
    Hence $\mathcal{L}u\in L^p(\Omega)$ since $g\in L^p(\Omega)$. Unfortunately, we cannot say anything on the boundary condition due to the lack of information about the regularity of $u$. More precisely, we cannot answer to the question: Does $u$ belong to $\Hs[\alpha]_{(0)}(\Omega)$ with $\alpha > \frac{3}{2}$. In addition, assuming that the limit \eqref{eq:direction_kernel} exists, we do not know if
    \begin{equation*}
        \nabla u(x_{0}) \cdot\vect{n}_{x_0}=0
    \end{equation*}
    holds on $\partial \Omega$ in the sense of traces under general assumptions.
\end{remark}
We can thus just conjecture that $u$ should satisfy~\eqref{eq:Neumannbc}.

\begin{remark}\label{rem:HomKernel}
    The limit~\eqref{eq:direction_kernel} exists in many examples of interaction kernel which are interesting for applications. Among them there are kernels given by a homogeneous principal part of order $\alpha$ perturbed by lower order terms. For instance
    \begin{equation}\label{eq:IsoKernel}
        k(x, y, x-y) = \frac{\Const}{|x-y|^{n + \alpha}} + o (|x-y|^{-n-\alpha})
    \end{equation}
    or, more generally,
    \begin{equation*}
        k(x, y, x-y) = \frac{g(x,y)}{|x-y|^{n + \alpha}} + o (|x-y|^{-n-\alpha})
    \end{equation*}
    with $g\in \Cont(\overline{\Omega}\times \overline{\Omega} )$. In the latter cases a simple calculation using the homogeneity of $|x-y|^{-n-\alpha}$ and the continuity of $g$ in $(x_0,x_0)$ yields
{    \begin{equation*}
        \vect{n}_{x_{0}} = \int_{\Rn_+} \int_{\Rn_+} (x-y) (\varphi_1(x)-\varphi_1(y)) k(x_0, x_0, x-y) \, \mathrm{d}x \mathrm{d}y.
    \end{equation*}}
\end{remark}

\begin{remark}\label{rem:IsoKernel}
    In the case \eqref{eq:IsoKernel}, using the higher-order term symmetry, we have
{    \begin{equation*}
        \vect{n}_{x_{0}} =\int_{\Rn_+} \int_{\Rn_+} (x-y) (\varphi_1(x)-\varphi_1(y)) k(x_0, x_0, x-y) \, \mathrm{d}x \mathrm{d}y. = \Const \vect{\nu}
    \end{equation*}
    where $\vect{\nu}$ is the unit outward normal to the boundary and $C\neq 0$.} Therefore, in this case we recover the usual Neumann boundary conditions.
\end{remark}

\subsection{Proof of Theorem~\ref{thm:boundary_regularity}: case $n=1$}

{Before proving Theorem~\ref{thm:boundary_regularity} in the case $n\geq 2$, we first discuss the simpler one-dimensional case. For $\mathbb{R}^{n}_+$ with $n \geqslant 2$ the same general strategy applies with the required changes (see Section~\ref{S:boundary_general}). Througout this section we assume that $u$ is as in the assumption of Theorem~\ref{thm:boundary_regularity} and $n=1$. }

Consider the cut-off function at $x=0$ defined by
\begin{equation*}
    \varphi_{\delta}(x) =
    \begin{cases}
        1 - \frac{x}{\delta}    &\text{for $x \in [0, \delta)$,}\\
        0                       &\text{otherwise.}
    \end{cases}
\end{equation*}
Observe also that the function $\varphi_{\delta}$ is Lipschitz continuous and hence belongs to $\Hs[\gamma](\Omega)$ for any $\gamma \in [0,1]$ and thus to $\Hs[\sfrac{\alpha}{2}](\Omega)$.

Using \eqref{eq:nonlocal_elliptic}, we obtain
 \begin{align*}
   \left|\mathcal{E}(u,\varphi_\delta)\right| &= \left| \int_{0}^{\infty} g(x) \varphi_{\delta}(x) \, \mathrm{d}x \right|\\
 &\leqslant \|g\|_p  \|\varphi_{\delta}\|_{p'} \leqslant C \|g\|_{p} {\delta^{\sfrac1{p'}}}= O(\delta^{\sfrac{1}{p'}}) \qquad \text{as $\delta \to 0$.}
 \end{align*}
We now consider the quantity $\mathcal{E}(u, \varphi_{\delta})$ in more detail. We have
\begin{align}\label{eq:boundary_1D_bilinear}
    &\mathcal{E}(u, \varphi_{\delta}) \notag \\
    =& \frac{1}{2} \int_{0}^{\infty} \int_{0}^{\infty} (u(x) - u(y)) (\varphi_{\delta}(x) - \varphi_{\delta}(y)) k(x, y, x-y) \, \mathrm{d}x\mathrm{d}y \notag\\
    =& \frac{1}{2} \int_{0}^{\delta} \int_{0}^{\delta} (u(x) - u(y)) \left( \left( 1 - \frac{x}{\delta} \right)  - \left( 1 - \frac{y}{\delta} \right)  \right) k(x, y, x-y) \, \mathrm{d}x\mathrm{d}y \notag\\
    & {} + \frac{1}{2} \int_{\delta}^{\infty} \int_{0}^{\delta} (u(x) - u(y)) \left( 1 - \frac{x}{\delta} \right)  k(x, y, x-y) \mathrm{d}x \mathrm{d}y \notag\\
    & {} - \frac{1}{2} \int_{0}^{\delta} \int_{\delta}^{\infty} (u(x) - u(y)) \left( 1 - \frac{y}{\delta} \right)  k(x, y, x-y) \mathrm{d}x
    \mathrm{d}y\notag\\
    =& I_1 + I_2 + I_3.
\end{align}
The first integral reduces to
\begin{equation*}
    I_{1} = \frac{1}{2 \delta} \int_{0}^{\delta} \int_{0}^{\delta} (u(x) - u(y))(y-x) k(x, y, x-y) \, \mathrm{d}x\mathrm{d}y.
\end{equation*}
By using Taylor series expansion near $0$ for $u$, this integral can be estimated by
\begin{align*}
    I_{1} \sim & \frac{1}{2 \delta} \int_{0}^{\delta} \int_{0}^{\delta} u'(0) (x-y) (y-x) k(x, y, x-y) \, \mathrm{d}x\mathrm{d}y\\
    &\quad {} + \frac{\Const}{\delta} \int_{0}^{\delta} \int_{0}^{\delta} (x-y)^{1+\beta} (y-x) k(x, y, x-y) \, \mathrm{d}x\mathrm{d}y\\
    =& - \frac{1}{2\delta} u'(0) \int_{0}^{\delta} \int_{0}^{\delta} (x-y)^{2} k(x, y, x-y) \mathrm{d}x\mathrm{d}y + O(\delta^{2+\beta-\alpha}),
\end{align*}
Here the notation $A \sim B$ means that the quantities $A$ and $B$ are equivalent for $\delta \to 0$, i.e., that, for $\delta$ sufficiently small, there exist two positive constants $\const$ and $\Const$ such that $c B \leqslant A \leqslant \Const B$. Indeed, notice that by assumption~\eqref{k-ass-three}, $(x-y)^{1+\alpha} k(x, y, x-y)$ is uniformly bounded away from zero from below and from above.

In the sequel, $\omega(\delta^{\gamma})$ indicates a quantity which is asymptotical to $\delta^{\gamma}$ in the following sense
\begin{equation*}
    g(\delta) \in \omega(\delta^{\gamma}) \qquad \text{if} \qquad \const \leqslant \liminf_{\delta \to 0} \frac{g(\delta)}{\delta^{\gamma}} \leqslant \limsup_{\delta \to 0} \frac{g(\delta)}{\delta^{\gamma}} \leqslant \Const, \qquad \const, \Const > 0.
\end{equation*}
From the above computations we deduce
\begin{equation*}
    I_{1} = -u'(0) \omega(\delta^{2-\alpha}) + O(\delta^{2 + \beta -\alpha}).
\end{equation*}

The remaining two terms $I_2$ and $I_3$ in~\eqref{eq:boundary_1D_bilinear} are equivalent. Thus it suffices to observe that
\begin{align*}
    I_{3} =& \int_{0}^{\delta} \int_{\delta}^{\infty} (u(y) - u(x) + u'(y)(x-y)) \left( 1 - \frac{y}{\delta} \right) k(x, y, x-y) \, \mathrm{d}x \mathrm{d}y\\
    & \quad {} + \int_{0}^{\delta} \left( \int_{\delta}^{\infty} (y-x) k(x, y, x-y) \, \mathrm{d}x \right) u'(y) \left( 1 - \frac{y}{\delta} \right) \mathrm{d}y.
\end{align*}
However, the first of these two terms are of order $O(\delta^{2+\beta-\alpha})$, while we can easily estimate the inner integral appearing in the second one as
\begin{align*}
    &\int_{\delta}^{\infty} (y-x) k(x, y, x-y) \, \mathrm{d}x \sim \int_{\delta}^{\infty} (y-x) |x-y|^{-1-\alpha} \, \mathrm{d}x \\
    &\sim - \int_{\delta}^{\infty} (x-y)^{-\alpha} \, \mathrm{d}x \sim - (\delta - y)^{1 - \alpha},
\end{align*}
where we used the fact that $x > y$ always when $I_{3}$ will be computed. Therefore we deduce
\begin{align*}
    I_{3} \sim& - \int_{0}^{\delta} (\delta - y)^{1 - \alpha} u'(y) \left( 1 - \frac{y}{\delta} \right) \mathrm{d}y + O(\delta^{2 + \beta - \alpha})\\
    \sim& - \frac{1}{\delta} \int_{0}^{\delta} (\delta - y)^{2 - \alpha} u'(0) \, \mathrm{d}y + O(\delta^{2+\beta-\alpha})\\
    \sim& \frac{1}{\delta} u'(0) \left. \frac{(\delta-y)^{3 - \alpha}}{3 - \alpha} \right|_{0}^{\delta} + O(\delta^{2 + \beta - \alpha}) \sim - u'(0) \delta^{2 - \alpha} + O(\delta^{2 + \beta - \alpha}).
\end{align*}
Combining all the above estimates we obtain, for some positive constants $\const$ and $\Const$,
\begin{equation*}
    - \Const u'(0) \delta^{2 - \alpha} - \Const \delta^{2+\beta-\alpha} \leqslant \mathcal{E}(u, \varphi_{\delta}) \leqslant - \const u'(0) \delta^{2-\alpha} + \Const \delta^{2 + \beta - \alpha}.
\end{equation*}
Because of \eqref{eq:nonlocal_elliptic}, we have
\begin{equation*}
    \mathcal{E}(u, \varphi_{\delta}) = \int_{0}^{\infty} g(x)\varphi_{\delta}(x)  \mathrm{d}x
\end{equation*}
so that
\begin{equation*}
    -u'(0) \delta^{2 - \alpha} + O(\delta^{2 + \beta - \alpha}) = O(\delta^{\sfrac{1}{p'}}) \qquad \text{as $\delta \to 0$.}
\end{equation*}
However, since $\alpha-1 > \tfrac{1}{p}$, one readily sees that this is possible only if  $u'(0) = 0$.

\subsection{Proof of Theorem~\ref{thm:boundary_regularity}: case $n\geqslant 2$}\label{S:boundary_general}

Here we consider the case $n\geqslant 2$ in the statement of Theorem~\ref{thm:boundary_regularity}. In this section, unless otherwise stated, we will denote by $B^{+}_{\delta}$ the half ball of center $x_{0} \in \partial \mathbb{R}^{n}_{+}$ contained in the half-space $\mathbb{R}^{n}_{+}$ and having radius $\delta$. Before giving the details of the proof we prove a useful technical lemma.
\begin{lemma}\label{L:integral}
    Let $r$ be a real number such that $r > -1-n$. Then the following relation holds
    \begin{equation*}
        I_{r} \eqdef \int_{B^{+}_{\delta}} \int_{B^{+}_{\delta}} |x-y|^{r} \left| |x{-x_0}| - |y{-x_0}| \right| \, \mathrm{d}x \mathrm{d}y = \omega(\delta^{1 + r +2n}) \qquad \text{for $\delta \to 0$.}
    \end{equation*}
\end{lemma}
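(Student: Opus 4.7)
The natural approach is a scaling argument: $I_r$ is built from quantities that are homogeneous under dilations centered at $x_0$, so after rescaling the problem should reduce to a single $\delta$-independent integral whose finiteness and positivity need to be checked.

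Concretely, I would translate so that $x_0$ sits at the origin of $\partial\mathbb{R}^n_+$ and then perform the change of variables $x = \delta\tilde x$, $y = \delta\tilde y$. The domain $B^+_\delta$ transforms into the fixed unit half-ball $B^+_1$, the Jacobian contributes $\delta^{2n}$, the factor $|x-y|^r$ scales as $\delta^r|\tilde x-\tilde y|^r$, and $\bigl||x-x_0|-|y-x_0|\bigr|$ scales as $\delta\,\bigl||\tilde x|-|\tilde y|\bigr|$. Collecting the powers of $\delta$ yields the clean identity
\begin{equation*}
    I_r \;=\; \delta^{\,1+r+2n}\, C_r, \qquad C_r \;\eqdef\; \int_{B^+_1}\int_{B^+_1} |\tilde x-\tilde y|^r\,\bigl||\tilde x|-|\tilde y|\bigr|\,\mathrm{d}\tilde x\,\mathrm{d}\tilde y,
\end{equation*}
valid for every $\delta>0$. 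Thus the conclusion $I_r\in\omega(\delta^{1+r+2n})$ reduces to showing $0<C_r<\infty$.

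For finiteness, the only issue is the diagonal $\tilde x=\tilde y$, where $|\tilde x-\tilde y|^r$ may be singular. Here the key observation is the reverse triangle inequality $\bigl||\tilde x|-|\tilde y|\bigr|\leqslant|\tilde x-\tilde y|$, which gives the pointwise bound $|\tilde x-\tilde y|^r\,\bigl||\tilde x|-|\tilde y|\bigr|\leqslant|\tilde x-\tilde y|^{r+1}$. Since $r+1>-n$ by hypothesis, this is integrable on $B^+_1\times B^+_1$ by a standard polar-coordinates computation around the diagonal, so $C_r<\infty$. Positivity is immediate: the integrand is nonnegative and strictly positive on the (open, hence positive-measure) set where $|\tilde x|\neq|\tilde y|$, so $C_r>0$.

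The argument is essentially mechanical; no single step is delicate. The only point to double-check is that the diagonal singularity estimate really uses the hypothesis in the sharp way, i.e.\ that $r>-1-n$ is precisely the right threshold — which it is, since $|\tilde x-\tilde y|^{r+1}$ is locally integrable in $2n$-dimensional Lebesgue measure iff $r+1>-n$.
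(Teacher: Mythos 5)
Your proposal is correct and follows exactly the paper's own argument: translate to $x_0=0$, rescale by $\delta$ to extract the factor $\delta^{1+r+2n}$, bound the resulting $\delta$-independent integral via $\bigl||\tilde x|-|\tilde y|\bigr|\leqslant|\tilde x-\tilde y|$ and local integrability of $|\tilde x-\tilde y|^{r+1}$ for $r+1>-n$, and note strict positivity of the constant. Your treatment of the positivity of $C_r$ is in fact slightly more careful than the paper's one-line remark.
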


\begin{proof}
    First of all by a simple translation we can reduce to the case $x_0=0$. Then using the change of variable $x=\delta \tilde{x}, y=\delta \tilde{y}$ we obtain
    \begin{equation*}
        I_r = \delta^{1+r+2n} \int_{B^{+}_{1}} \int_{B^{+}_{1}} |x-y|^{r} \left| |x| - |y| \right| \, \mathrm{d}x \mathrm{d}y,
    \end{equation*}
    where the last integral can be estimate as
    \begin{equation*}
        \int_{B^{+}_{1}} \int_{B^{+}_{1}} |x-y|^{r} \left| |x| - |y| \right| \, \mathrm{d}x \mathrm{d}y \leq\int_{B^{+}_{1}} \int_{B^{+}_{1}} |x-y|^{r+1} \, \mathrm{d}x \mathrm{d}y <\infty
    \end{equation*}
    since $r+1>-n$. Since the integral on the left-hand side is obviously positive, the statement follows.
\end{proof}

The main idea of our proof is to consider an appropriate family of test functions concentrating at one point of the boundary.  Let us consider the bilinear form
\begin{equation*}
    \mathcal{E}(u,v) = \frac{1}{2} \int_{\mathbb{R}^{n}_{+}} \int_{\mathbb{R}^{n}_{+}} (u(x) - u(y)) (v(x) - v(y)) k(x, y, x-y) \mathrm{d}x\mathrm{d}y.
\end{equation*}
Let $x_{0}$ be a point belonging to the boundary of $\mathbb{R}^{n}_{+}$ and consider the test function $\varphi_{\delta} \in \Cont(\mathbb{R}^{n}_{+}) \cap \Hs(\mathbb{R}^{n}_{+}) \subset \Hs[\sfrac{\alpha}{2}](\mathbb{R}^{n}_{+})$ defined by
\begin{equation*}
    \varphi_{\delta}(x) =
    \begin{cases}
        1 - \frac{x - x_{0}}{\delta}    &\text{if $|x-x_{0}| < \delta$ and $x \in \mathbb{R}^{n}_{+}$,}\\
        0                               &\text{otherwise.}
    \end{cases}
\end{equation*}
In particular, we have by \eqref{eq:nonlocal_elliptic}
\begin{equation*}
    \left|\mathcal{E}(u,\varphi_\delta)\right|=        \left| \int_{\mathbb{R}^{n}_{+}} g(x) \varphi_{\delta}(x) \, \mathrm{d}x \right| \leqslant{ \norm{L^p(\mathbb{R}^{n}_{+})}{g} \norm{L^{p'}(\mathbb{R}^{n}_{+})}{\varphi_{\delta}}.}
\end{equation*}
    Moreover, there holds
    \begin{equation*}
       \norm{L^p(\mathbb{R}^{n}_{+})}{\varphi_{\delta}} = \left(\int_{B_{\delta}^+} \left( 1 - \frac{|x - x_{0}|}{\delta} \right)^{p'} \, \mathrm{d}x\right)^{\sfrac1{p'}} \sim \delta^{\sfrac{n}{p'}}.
    \end{equation*}
Observe now that
\begin{align}
    \mathcal{E}(u, \varphi_{\delta})
    =& \frac{1}{2} \int_{\mathbb{R}^{n}_{+}} \int_{\mathbb{R}^{n}_{+}} (u(x) - u(y)) (\varphi_{\delta}(x) - \varphi_{\delta}(y)) k(x, y, x-y) \, \mathrm{d}x\mathrm{d}y \notag\\
    =& \frac{1}{2 \delta} \int_{B^{+}_{\delta}} \int_{B^{+}_{\delta}} (u(x) - u(y)) \left( |x - x_{0}| - |y - x_{0}| \right) k(x, y, x-y) \, \mathrm{d}x\mathrm{d}y \notag\\
    &\quad {} + \frac{1}{2} \int_{B^{+}_{> \delta}} \int_{B^{+}_{\delta}} (u(x) - u(y)) \left( 1 - \frac{|x-x_{0}|}{\delta} \right) k(x, y, x-y) \, \mathrm{d}x \mathrm{d}y \notag\\
    &\quad {} + \frac{1}{2} \int_{B^{+}_{\delta}} \int_{B^{+}_{> \delta}} (u(x) - u(y)) \left( 1 - \frac{|y-x_{0}|}{\delta} \right) k(x, y, x-y) \, \mathrm{d}x \mathrm{d}y \notag\\
    =& J_{1} + J_{2} + J_{3}. \label{eq:boundary_Rn_bilinear}
\end{align}
Here $B^{+}_{> \delta}$ denotes the set
\begin{equation*}
    B^{+}_{> \delta} \eqdef \{ x \in \mathbb{R}^{n}_{+} \mid |x-x_{0}| > \delta \}.
\end{equation*}
We must evaluate the asymptotic behavior of $J_1$, $J_2$ and $J_3$ as $\delta \to 0$.
Concerning the first one, using Taylor expansion, up to the first order we can write
\begin{equation*}
    u(x) - u(y) = \nabla u(x_{0}) \cdot (x-y) + {O(\delta^{\beta})|x-y|} \qquad \text{for $x, y \in B_{\delta}$}
\end{equation*}
where we have used
\begin{align*}
    &u(x) - u(y) - \nabla u(x_0) \cdot (x-y) \\
  =&\int_{0}^{1} (\nabla u(x + s(y-x))-\nabla u(x_0)) \cdot (x-y) \, \mathrm{d}s\\
  =&  O(\delta^{\beta}) |x-y|
\end{align*}
for all $x,y\in B_\delta$. Therefore, we obtain
\begin{align*}
    J_{1} &= \frac{1}{2 \delta} \nabla u(x_{0}) \cdot \int_{B^{+}_{\delta}} \int_{B^{+}_{\delta}} (x-y) \left( |x - x_{0}| - |y - x_{0}| \right) k(x, y, x-y) \, \mathrm{d}x\mathrm{d}y \\
    &+ O(\delta^{1 + \beta + n - \alpha}).
\end{align*}

We are now left to analyze $J_2$ and $J_3$ (see~\eqref{eq:boundary_Rn_bilinear}). This can be done similarly as in the case $n=1$. Indeed, we have
\begin{align*}
    J_{2} =& \frac{1}{2} \int_{B^{+}_{\delta}} \int_{B^{+}_{> \delta}} (u(x) - u(y)) \left( 1 - \frac{|x - x_{0}|}{\delta} \right) k(x, y, x-y) \, \mathrm{d}x \mathrm{d}y\\
    =& \frac{1}{2} \nabla u(x_{0}) \cdot \int_{B^{+}_{\delta}} \left( \int_{B^{+}_{> \delta}} (x-y) \left( 1 - \frac{|x - x_{0}|}{\delta} \right) k(x, y, x-y) \, \mathrm{d}x \right) \mathrm{d}y\\
    & \quad {} + O(\delta^{\beta}) \int_{B^{+}_{\delta}} \left( \int_{B^{+}_{> \delta}} |x-y| \left( 1 - \frac{|x - x_{0}|}{\delta} \right) k(x, y, x-y) \, \mathrm{d}x \right) \mathrm{d}y\\
    &\equiv J_{2,1}+J_{2,2}
\end{align*}
Thus, in this case, we only have to bound integrals of the form
\begin{align*}
    J_4 \eqdef &\int_{B^{+}_{\delta}} \left( \int_{B^{+}_{> \delta}} |x-y|^{1 - \alpha - n} \left( 1 - \frac{|x{-x_0}|}{\delta} \right) \mathrm{d}y \right) \mathrm{d}x\\
    =& \int_{B^{+}_{\delta}} \left( 1 - \frac{|x{-x_0}|}{\delta} \right) \left( \int_{B^{+}_{> \delta}} |x-y|^{1 - \alpha - n}  \, \mathrm{d}y \right) \mathrm{d}x.
\end{align*}
Let us set
\begin{equation*}
    J_{x} \eqdef \int_{B^{+}_{> \delta}} |x - y|^{1 - \alpha - n} \, \mathrm{d}y
\end{equation*}
and observe that
\begin{equation*}
    J_{x} \leqslant \int_{B_{> (\delta - |x|)}} |y|^{1 - \alpha - n} \, \mathrm{d}y = \int_{\delta - |x|}^{+\infty} \rho^{-\alpha} \, \mathrm{d}\rho \sim (\delta - |x|)^{1 - \alpha}.
\end{equation*}
Then we have
\begin{equation*}
    J_4 \leqslant \Const \frac{1}{\delta} \int_{B^{+}_{\delta}} (\delta - |x|)^{2 - \alpha} \, \mathrm{d}x = \Const \delta^{1 + n - \alpha} \int_{0}^{1} (1 - t)^{2 - \alpha} t^{n-1} \, \mathrm{d}t \sim \delta^{1 + n - \alpha}.
\end{equation*}
From such estimates we deduce
\begin{equation*}
    J_{2} = J_{2,1} + O(\delta^{1 + \beta + n - \alpha}).
\end{equation*}
The quantity $J_3$ is controlled in the same way, just interchanging the role of $x$ and $y$.

Recalling that
\begin{equation*}
    \mathcal{E}(u, \varphi_{\delta}) = \int_{\mathbb{R}^{n}_{+}} \mathcal{L}u(x) \varphi_{\delta}(x) \, \mathrm{d}x
\end{equation*}
and using the above estimates for $J_1$, $J_2$ and $J_3$, we get
\begin{align*}
    &\nabla u(x_{0}) \cdot \int_{\Rn_+} \int_{\Rn_+} (x - y) (\varphi_\delta(x) - \varphi_\delta(y)) k(x, y, x-y) \, \mathrm{d}x \mathrm{d}y \\
    &+ O(\delta^{1 + \beta + n - \alpha}) = O(\delta^{\sfrac{n}{p'}}).
\end{align*}
Since the double integral belongs to $\omega(\delta^{1+n - \alpha })$ by Lemma~\ref{L:integral}, on account of~\eqref{eq:direction_kernel}, {and $1+n-\alpha< \frac{n}{p'}$ (which is equivalent to $\alpha-1>\frac{n}{p'}$)}, we finally deduce
\begin{equation*}
    \nabla u(x_{0}) \cdot \vect{n}_{x_{0}} = 0. \qedhere
\end{equation*}

\bibliography{Bibliography,Bibliography2}

\def\ocirc#1{\ifmmode\setbox0=\hbox{$#1$}\dimen0=\ht0 \advance\dimen0
  by1pt\rlap{\hbox to\wd0{\hss\raise\dimen0
  \hbox{\hskip.2em$\scriptscriptstyle\circ$}\hss}}#1\else {\accent"17 #1}\fi}
\begin{thebibliography}{10}

\bibitem{Abels2007}
H.~Abels and M.~Kassmann.
\newblock An analytic approach to purely nonlocal {B}ellman equations arising
  in models of stochastic control.
\newblock {\em J. Differential Equations}, 236:29--56, 2007.

\bibitem{AsymptoticCH}
H.~Abels and M.~Wilke.
\newblock Convergence to equilibrium for the {C}ahn-{H}illiard equation with a
  logarithmic free energy.
\newblock {\em Nonlinear Anal.}, 67:3176--3193, 2007.

\bibitem{Adams}
R.~A. Adams.
\newblock {\em Sobolev Spaces}.
\newblock Academic Press, New York, San Francisco, London, 1975.

\bibitem{Ball1997}
J.~M. Ball.
\newblock Continuity properties and global attractors of generalized semiflows
  and the {N}avier-{S}tokes equations.
\newblock {\em J. Nonlinear Sci.}, 7(5):475--502, 1997.

\bibitem{Ball1998}
J.~M. Ball.
\newblock Erratum: ``{C}ontinuity properties and global attractors of
  generalized semiflows and the {N}avier-{S}tokes equations''.
\newblock {\em J. Nonlinear Sci.}, 8(2):233, 1998.

\bibitem{BH-D05}
P.~W. Bates and J.~Han.
\newblock The {D}irichlet boundary problem for a nonlocal {C}ahn-{H}illiard
  equation.
\newblock {\em J. Math. Anal. Appl.}, 311:289--312, 2005.

\bibitem{BH-N05}
P.~W. Bates and J.~Han.
\newblock The {N}eumann boundary problem for a nonlocal {C}ahn-{H}illiard
  equation.
\newblock {\em J. Differential Equations}, 212:235--277, 2005.

\bibitem{Interpolation}
J.~Bergh and J.~L{\"{o}}fstr{\"{o}}m.
\newblock {\em Interpolation Spaces}.
\newblock Springer, Berlin - Heidelberg - New York, 1976.

\bibitem{BBC03}
K.~Bogdan, K.~Burdzy and Z.-Q. Chen.
\newblock Censored stable processes.
\newblock {\em Probab. Theory Related Fields}, 127:89--152, 2003.

\bibitem{Brezis}
H.~Br{\'e}zis.
\newblock {\em Op\'erateurs maximaux monotones et semi-groupes de contractions
  dans les espaces de {H}ilbert}.
\newblock North-Holland Publishing Co., Amsterdam, 1973.
\newblock North-Holland Mathematics Studies, No. 5. Notas de Matem\'atica (50).

\bibitem{Cahn-61}
J.~W. Cahn.
\newblock On spinodal decomposition.
\newblock {\em Acta Metall.}, 9:795--801, 1961.

\bibitem{CahnHilliard}
J.~W. Cahn and J.~E. Hilliard.
\newblock Free energy of a nonuniform system. {I}. {I}nterfacial energy.
\newblock {\em J. Chem. Phys.}, 28:258--267, 1958.

\bibitem{CMZ-11}
L.~Cherfils, A.~Miranville, and S.~Zelik.
\newblock The {C}ahn-{H}illiard equation with logarithmic potentials.
\newblock {\em Milan J. Math.}, 79:561--596, 2011.

\bibitem{CKRS-07}
P.~Colli, P.~Krej{\v{c}}{\'{\i}}, E.~Rocca, and J.~Sprekels.
\newblock Nonlinear evolution inclusions arising from phase change models.
\newblock {\em Czechoslovak Math. J.}, 57:1067--1098, 2007.

\bibitem{Debussche1995}
A.~Debussche and L.~Dettori.
\newblock On the {C}ahn-{H}illiard equation with a logarithmic free energy.
\newblock {\em Nonlinear Anal.}, 24(10):1491--1514, 1995.

\bibitem{EG-96}
C.~M. Elliott and H.~Garcke.
\newblock On the {C}ahn-{H}illiard equation with degenerate mobility.
\newblock {\em SIAM J. Math. Anal.}, 27:404--423, 1996.

\bibitem{EZ-86}
C.~M. Elliott and S.~Zheng.
\newblock On the {C}ahn-{H}illiard equation.
\newblock {\em Arch. Rational Mech. Anal.}, 96:339--357, 1986.

\bibitem{Fife-2000}
P.~C. Fife.
\newblock Models for phase separation and their mathematics.
\newblock {\em Electron. J. Differential Equations}, pages No. 48, 26 pp.
  (electronic), 2000.

\bibitem{Folland1999}
G.~B. Folland.
\newblock {\em Real analysis}.
\newblock Pure and Applied Mathematics (New York). John Wiley \& Sons Inc., New
  York, second edition, 1999.
\newblock Modern techniques and their applications, A Wiley-Interscience
  Publication.

\bibitem{GZ-03}
H.~Gajewski and K.~Zacharias.
\newblock On a nonlocal phase separation model.
\newblock {\em J. Math. Anal. Appl.}, 286:11--31, 2003.

\bibitem{GG-14}
C.~G. Gal and M.~Grasselli.
\newblock Longtime behavior of nonlocal {C}ahn-{H}illiard equations.
\newblock {\em Discrete Contin. Dyn. Syst. Ser. A}, 34:145--179, 2014.

\bibitem{GL-97}
G.~Giacomin and J.~L. Lebowitz.
\newblock Phase segregation dynamics in particle systems with long range
  interactions. {I}. {M}acroscopic limits.
\newblock {\em J. Statist. Phys.}, 87:37--61, 1997.

\bibitem{GL-98}
G.~Giacomin and J.~L. Lebowitz.
\newblock Phase segregation dynamics in particle systems with long range
  interactions. {II}. {I}nterface motion.
\newblock {\em SIAM J. Appl. Math.}, 58:1707--1729, 1998.

\bibitem{LP-DCDS-S-11}
S.-O. Londen and H.~Petzeltov{\'a}.
\newblock Convergence of solutions of a non-local phase-field system.
\newblock {\em Discrete Contin. Dyn. Syst. Ser. S}, 4:653--670, 2011.

\bibitem{LP-JMAA-11}
S.-O. Londen and H.~Petzeltov{\'a}.
\newblock Regularity and separation from potential barriers for a non-local
  phase-field system.
\newblock {\em J. Math. Anal. Appl.}, 379:724--735, 2011.

\bibitem{NNG-08}
A.~A. Nec, Y.~Nepomnyashchy and A.~A. Golovin.
\newblock Front-type solutions of fractional {A}llen-{C}ahn equation.
\newblock {\em Phys. D}, 237:3237--3251, 2008.

\bibitem{NST-89}
B.~Nicolaenko, B.~Scheurer, and R.~Temam.
\newblock Some global dynamical properties of a class of pattern formation
  equations.
\newblock {\em Comm. Partial Differential Equations}, 14:245--297, 1989.

\bibitem{N-C-98}
A.~Novick-Cohen.
\newblock The {C}ahn-{H}illiard equation: mathematical and modeling
  perspectives.
\newblock {\em Adv. Math. Sci. Appl.}, 8:965--985, 1998.

\bibitem{N-C-08}
A.~Novick-Cohen.
\newblock The {C}ahn-{H}illiard equation.
\newblock In {\em Handbook of differential equations: evolutionary equations.
  {V}ol. {IV}}, Handb. Differ. Equ., pages 201--228. Elsevier/North-Holland,
  Amsterdam, 2008.

\bibitem{PZ-07}
V.~Pata and S.~Zelik.
\newblock A result on the existence of global attractors for semigroups of
  closed operators.
\newblock {\em Commun. Pure Appl. Anal.}, 6:481--486, 2007.

\bibitem{Showalter}
R.~E. Showalter.
\newblock {\em Monotone Operators in {B}anach Space and Nonlinear Partial
  Differential Equations}, volume~49 of {\em Mathematical Surveys and
  Monographs}.
\newblock American Mathematical Society, Providence, RI, 1997.

\bibitem{VektorBesovSpaces}
J.~Simon.
\newblock Sobolev, {B}esov and {N}ikol'ski\u\i\ fractional spaces: imbeddings
  and comparisons for vector valued spaces on an interval.
\newblock {\em Ann. Mat. Pura Appl. (4)}, 157:117--148, 1990.

\bibitem{ZeidlerIIa}
E.~Zeidler.
\newblock {\em Nonlinear functional analysis and its applications. {II}/{A}}.
\newblock Springer-Verlag, New York, 1990.

\end{thebibliography}
\bibliographystyle{plain}

\end{document}